\newtheorem{theorem}{Theorem}[section]
\newtheorem{lemma}[theorem]{Lemma}
\newtheorem{corollary}[theorem]{Corollary}
\theoremstyle{definition}
\newtheorem{definition}[theorem]{Definition}
\newtheorem{remark}[theorem]{Remark}
\newtheorem{notation}[theorem]{Notation}
\def\address#1{{\center{#1}}}
\date{}
\def\m@th{\mathsurround=0pt}
\def\eqal#1{\null\,\vcenter{\openup\jot\m@th
\ialign{\strut\hfil$\displaystyle{##}$&&$\displaystyle{{}##}$\hfil
 \crcr#1\crcr}}\,}
\def\matrix#1{\null\,\vcenter{\normalbaselines\m@th
 \ialign{\hfil$##$\hfil&&\quad\hfil$##$\hfil\crcr
 \mathstrut\crcr\noalign{\kern-\baselineskip}
 #1\crcr\mathstrut\crcr\noalign{\kern-\baselineskip}}}\,}
\def\N{{\Bbb N}}
\def\R{{\Bbb R}}
\def\divv{{\rm div}\,}
\def\curl{{\rm curl}\,}
\numberwithin{equation}{section}
\title{On regularity estimates for axially symmetric Navier--Stokes equations in a cylinder and \\the critical-wedge occurrence problem}
\author{Wies\l aw J. Grygierzec$^{(1)*}$, Wojciech M. Zaj\c{a}czkowski$^{(2,3)}$}
\newcommand\blfootnote[1]{%
  \begingroup
  \renewcommand\thefootnote{}\footnote{#1}%
  \addtocounter{footnote}{-1}%
  \endgroup
}
\begin{document}
\input amssym.def
\input amssym.tex
\maketitle
\blfootnote{*Corresponding author.
Email: \texttt{wieslaw.grygierzec@urk.edu.pl}}

\address{
  \begin{tabular}{c}
    $^1$Department of Statistics and Social Policy,\\
    University of Agriculture in Krak\'ow, Al. Mickiewicza 21,\\
    31-120 Krak\'ow, Poland.\\
    e-mail: \texttt{wieslaw.grygierzec@urk.edu.pl}
  \end{tabular}\\[2.5ex]
  \begin{tabular}{c}
    $^2$Institute of Mathematics, Polish Academy of Sciences (emeritus professor),\\
    \'Sniadeckich 8, 00-656 Warsaw, Poland\\
    e-mail: \texttt{wz@impan.pl}
  \end{tabular}\\[2.5ex]
  \begin{tabular}{c}
    $^3$Institute of Mathematics and Cryptology, Cybernetics Faculty,\\
    Military University of Technology,\\
    S. Kaliskiego 2, 00-908 Warsaw, Poland.
  \end{tabular}
}

\begin{abstract}
  We consider the axisymmetric Navier-Stokes equations in a finite cylinder $\Omega\subset\R^3$. We assume that $v_r$, $v_\varphi$, $\omega_\varphi$ vanish on the lateral part of boundary $\partial\Omega$ of the cylinder, and that $v_z$, $\omega_\varphi$, $\partial_zv_\varphi$ vanish on the top and bottom parts of the boundary $\partial\Omega$, where we used standard cylindrical coordinates, and we denoted by $\omega=\curl v$ the vorticity field. Our aim is to derive estimates for
  $$
    X_s(t) := \bigg\|{\omega_r\over r}\bigg\|_{V(\Omega_t)}+\bigg\|{\omega_\varphi\over r}\bigg\|_{V(\Omega_t)}.
  $$
  The original closure mechanism depends on the relation between the $L^s$ norm and the $L^\infty$ norm of the angular component $v_\varphi$. We identify a critical wedge in the corresponding phase geometry, namely the regime
  $$
  W_{A,c_0} := \left\{
  t\in(0,T):
  \|v_\varphi(t)\|_{L^s(\Omega)}>A,\quad
  \frac{\|v_\varphi(t)\|_{L^s(\Omega)}}{\|v_\varphi(t)\|_{L^\infty(\Omega)}}<c_0
  \right\},
  $$
  with the convention that the ratio is $+\infty$ when $\|v_\varphi(t)\|_{L^\infty(\Omega)}=0$.

  The main result is a conditional a priori estimate in which the possible loss of control is measured by a critical-wedge residual. More precisely, if $E_{W,s}$ denotes the positive part of the non-closable contribution of the nonlinear interaction
  $$
  \int_{\Omega_t}\frac{v_\varphi}{r}\Phi\Gamma\,dx\,dt',
  $$
  restricted to $W_{A,c_0}$, then
  $$
  X_s(t)
  \leq
  \Psi_{s,A,c_0}\left(
  \mathrm{data},
  \int_0^t E_{W,s}(\tau)\,d\tau
  \right),
  \qquad 0<t<T,
  $$
  for an increasing positive function $\Psi_{s,A,c_0}$. In particular, if the critical-wedge residual vanishes, for instance when the trajectory does not enter $W_{A,c_0}$, the original type of a priori estimate depending only on the data is recovered. Under additional regularity assumptions on the force and the initial velocity, the corresponding higher Sobolev estimate for $v$ and $\nabla p$ follows with the same conditional dependence.
\end{abstract}

\noindent
2020 MSC: 35A01, 35B01, 35B65, 35Q30, 76D03, 76D05\\
Key words: Navier-Stokes equations, axially-symmetric solutions, cylindrical domain

\section{Introduction}\label{s1}

We are concerned with the 3D incompressible Navier-Stokes equations,
\begin{equation}\eqal{
    &\partial_tv-\nu\Delta v+v\cdot\nabla v+\nabla p=f,\cr
    &\divv v=0\quad {\rm in}\ \ \Omega^T,\cr}
  \label{1.1}
\end{equation}
under the axisymmetry constraint, where $\Omega^T\colon=\Omega\times(0,T)$, $T>0$, $v=v(x,t)\in\R^3$ denotes the velocity field, $p=p(x,t)\in\R$ denotes the pressure function, $f=f(x,t)\in\R^3$ denotes the external force field, $\nu>0$ denotes the viscosity, and $x=(x_1,x_2,x_3)$ denotes the Cartesian coordinates. As for $\Omega$ we focus on the case of a finite cylinder,
$$
  \Omega=\{x\in\R^3\colon x_1^2+x_2^2<R^2,|x_3|<a\},
$$
where $a,R>0$ are constants. We note that
$$
  S\colon=\partial\Omega=S_1\cup S_2,
$$
where
$$\eqal{
    &S_1=\{x\in\R^3\colon\sqrt{x_1^2+x_2^2}=R,x_3\in[-a,a]\},\cr
    &S_2=\{x\in\R^3\colon\sqrt{x_1^2+x_2^2}<R,x_3\in\{-a,a\}\}\cr}
$$
denote the lateral boundary and the top and bottom parts of the boundary respectively.

In order to state the boundary conditions stating our main result we use the cylindrical coordinates $r$, $\varphi$, $z$ defined by
$$
  x_1=r\cos\varphi,\quad x_2=r\sin\varphi,\quad x_3=z,
$$
and we will use standard cylindrical unit vectors, so that, for example
$$
  v=v_r\bar e_r+v_\varphi\bar e_\varphi+v_z\bar e_z.
$$
We will denote partial derivatives by using the subscript comma notation, e.g.
$$
  v_{r,z}\colon=\partial_zv_r.
$$
We assume the boundary conditions
\begin{equation}\eqal{
    &v_r=v_\varphi=\omega_\varphi=0\quad &{\rm on}\ \ S_1^T=S_1\times(0,T),\cr
    &v_z=\omega_\varphi=v_{\varphi,z}=0\quad &{\rm on}\ \ S_2^T=S_2\times(0,T),\cr}
  \label{1.2}
\end{equation}
where $\omega\colon=\curl v$ denotes the vorticity vector and we assume the initial condition
\begin{equation}
  v|_{t=0}=v_0,
  \label{1.3}
\end{equation}
where $v_0$ is a given divergence-free vector field satisfying the same boundary conditions.

We note that such boundary conditions have first appeared in the work of Ladyzhenskaya \cite{L}. In a sense, the boundary conditions (\ref{1.2}) are natural, since, when considering the vorticity-stream function formulation we need $\omega_\varphi|_S$. This together with the no-penetration condition naturally lead to~(\ref{1.2}).

The aim of this paper is to analyze a priori estimates for axially symmetric solutions to problem (\ref{1.1})--(\ref{1.3}), with particular emphasis on the quantities
\[
  \Phi=\frac{\omega_r}{r},\qquad \Gamma=\frac{\omega_\varphi}{r}.
\]
The main difficulty is related to the nonlinear interaction
\[
  \int_{\Omega_t}\frac{v_\varphi}{r}\Phi\Gamma\,dx\,dt',
\]
whose control depends on the distribution of the angular velocity component $v_\varphi$.

We collect all parameters of this paper in Notation \ref{n1.1}. They depend on initial data and forcing. The main results necessary for the proof of Theorem \ref{t1.2} are presented in Section \ref{s3}. To derive them we need $H^2-H^3$-elliptic estimates for the modified stream function (stream function divided by $r$) proved in Section \ref{s4}, energy estimates for the gradient of swirl are found in Section \ref{s5} and the crucial order reduction inequality is proved in Section \ref{s6}. Theorem \ref{t1.3} is proved in Section \ref{s7}. To prove it we need solvability of the Stokes system in Sobolev spaces with the mixed norm and theorems on traces for the Besov spaces with the mixed norm.

We will denote the swirl by
\begin{equation}
  u\colon=rv_\varphi.
  \label{1.4}
\end{equation}
Note that
\begin{equation}\eqal{
  &\omega_r=-v_{\varphi,z}=-{1\over r}u_{,z},\cr
  &\omega_\varphi=v_{r,z}-v_{z,r},\cr
  &\omega_z={1\over r}(rv_\varphi)_{,r}=v_{\varphi,r}+{v_\varphi\over r}={1\over r}u_{,r},\cr}
  \label{1.5}
\end{equation}
so that the boundary conditions (\ref{1.2}) imply in particular that
\begin{equation}\eqal{
    &\omega_r=v_{z,r}=u=0,\ \ \omega_z=v_{\varphi,r}\quad &{\rm on}\ \ S_1^T,\cr
    &\omega_r=v_{r,z}=\omega_{z,z}=v_{\varphi,z}=u_{,z}=0\quad &{\rm on}\ \ S_2^T.\cr}
  \label{1.6}
\end{equation}
The Navier-Stokes equations (\ref{1.1}) in cylindrical coordinates become
\begin{equation}\eqal{
  &v_{r,t}+v\cdot\nabla v_r-{v_\varphi^2\over r}-\nu\Delta v_r+\nu{v_r\over r^2}=-p_{,r}+f_r,\cr
  &v_{\varphi,t}+v\cdot\nabla v_\varphi+{v_r\over r}v_\varphi-\nu\Delta v_\varphi+\nu{v_\varphi\over r^2}=f_\varphi,\cr
  &v_{z,t}+v\cdot\nabla v_z-\nu\Delta v_z=-p_{,z}+f_z,\cr
  &(rv_r)_{,r}+(rv_z)_{,z}=0,\cr}
  \label{1.7}
\end{equation}
where
$$
  v\cdot\nabla=(v_r\bar e_r+v_z\bar e_z)\cdot\nabla=v_r\partial_r+v_z\partial_z,\quad
  \Delta u={1\over r}(ru_{,r})_{,r}+u_{,zz}.
$$
On the other hand, the vorticity formulation becomes
\begin{equation}\eqal{
  &\omega_{r,t}+v\cdot\nabla\omega_r-\nu\Delta\omega_r+\nu{\omega_r\over r^2}=\omega_rv_{r,r}+\omega_zv_{r,z}+F_r,\cr
  &\omega_{\varphi,t}+v\cdot\nabla\omega_\varphi-{v_r\over r}\omega_\varphi-\nu\Delta\omega_\varphi+\nu{\omega_\varphi\over r^2}={2\over r}v_\varphi v_{\varphi,z}+F_\varphi,\cr
  &\omega_{z,t}+v\cdot\nabla\omega_z-\nu\Delta\omega_z=\omega_rv_{z,r}+\omega_zv_{z,z}+F_z,\cr}
  \label{1.8}
\end{equation}
where $F\colon=\curl f$ and the swirl is a solution to the problem
\begin{equation}\eqal{
  &u_{,t}+v\cdot\nabla u-\nu\Delta u+{2\nu\over r}u_{,r}=rf_\varphi\colon=f_0,\cr
  &u=0\quad &{\rm on}\ \ S_1^T,\cr
  &u_{,z}=0\quad &{\rm on}\ \ S_2^T,\cr
  &u|_{t=0}=u_0=rv_\varphi(0)\quad &{\rm in}\ \ \Omega.\cr}
  \label{1.9}
\end{equation}
We will use the notation
\begin{equation}
  (\Phi,\Gamma)=\bigg({\omega_r\over r},{\omega_\varphi\over r}\bigg),
  \label{1.10}
\end{equation}
and we note that $\Phi$, $\Gamma$ satisfy
\begin{equation}
  \Phi_{,t}+v\cdot\nabla\Phi-\nu\bigg(\Delta+{2\over r}\partial_r\bigg)\Phi-(\omega_r\partial_r+\omega_z\partial_z){v_r\over r}=F_r/r\equiv\bar F_r,
  \label{1.11}
\end{equation}
\begin{equation}
  \Gamma_{,t}+v\cdot\nabla\Gamma-\nu\bigg(\Delta+{2\over r}\partial_r\bigg)\Gamma+2{v_\varphi\over r}\Phi=F_\varphi/r\equiv\bar F_\varphi,
  \label{1.12}
\end{equation}
recall (\cite{CFZ}, (1.6)). Moreover, by (\ref{1.2}, (\ref{1.6}), $\Gamma$ and $\Phi$ satisfy the boundary conditions
\begin{equation}
  \Phi=\Gamma=0\quad {\rm on}\ \ S^T.
  \label{1.13}
\end{equation}
Finally, the following initial conditions are assumed
\begin{equation}
  \Phi|_{t=0}=\Phi_0,\quad \Gamma|_{t=0}=\Gamma_0.
  \label{1.14}
\end{equation}
We note that $(\ref{1.7})_4$ implies existence of the stream function $\psi$ which solves the problem
\begin{equation}\eqal{
    &-\Delta\psi+{\psi\over r^2}=\omega_\varphi,\cr
    &\psi|_S=0.\cr}
  \label{1.15}
\end{equation}
Then $v$ can be expressed in terms of the stream function,
\begin{equation}\eqal{
  &v_r=-\psi_{,z},\quad &v_z={1\over r}(r\psi)_{,r}=\psi_{,r}+{\psi\over r},\cr
  &v_{r,r}=-\psi_{,zr},\quad &v_{z,z}=\psi_{,rz}+{\psi_{,z}\over r},\cr
  &v_{r,z}=-\psi_{,zz},\quad &v_{z,r}=\psi_{,rr}+{1\over r}\psi_{,r}-{\psi\over r^2}.\cr}
  \label{1.16}
\end{equation}
We will also use the modified stream function,
\begin{equation}
  \psi_1\colon={\psi\over r},
  \label{1.17}
\end{equation}
which satisfies
\begin{equation}\eqal{
  &-\Delta\psi_1-{2\over r}\psi_{1,r}=\Gamma,\cr
  &\psi_1|_S=0.\cr}
  \label{1.18}
\end{equation}
Using the modified stream function we can express coordinates of $v$ in the form
\begin{equation}\eqal{
    &v_r=-r\psi_{1,z},\quad &v_z=(r\psi_1)_{,r}+\psi_1=r\psi_{1,r}+2\psi_1,\cr
    &v_{r,r}=-\psi_{1,z}-r\psi_{1,rz},\quad &v_{z,r}=3\psi_{1,r}+r\psi_{1,rr},\cr
    &v_{r,z}=-r\psi_{1,zz},\quad &v_{z,z}=r\psi_{1,rz}+2\psi_{1,z}.\cr}
  \label{1.19}
\end{equation}
Projecting $(\ref{1.18})_1$ on $S_2$, using $(\ref{1.18})_2$ and that $\Gamma|_{S_2}=0$ by (\ref{1.2}), we obtain
\begin{equation}
  \psi_{1,zz}=0\quad {\rm on}\ \ S_2.
  \label{1.20}
\end{equation}
Since in this paper we are looking for regular solutions to problem (\ref{1.1})--(\ref{1.3}), we need the following, expansions near the axis of symmetry due to Liu-Wang (see \cite{LW}),
\begin{equation}\eqal{
    &v_r(r,z,t)=a_1(z,t)r+a_2(z,t)r^2+\dots,\cr
    &v_\varphi(r,z,t)=b_1(z,t)r+b_2(z,t)r^2+\dots,\cr
    &\psi(r,z,t)=d_1(z,t)r+d_2(z,t)r^3+\dots,\cr
    &\psi_1(r,z,t)=d_1(z,t)+d_2(z,t)r^2+\dots,\cr
    &\psi_{1,r}(r,z,t)=2d_2(z,t)r+\dots,.\cr}
  \label{1.21}
\end{equation}
In order to formulate the main results we introduce constants which depend on the initial data and forcing.

\begin{notation}\label{n1.1}
  $$\eqal{
    &D_1=3|f|_{2,1,\Omega^t}+2|v(0)|_{2,\Omega}\quad &({\rm see}\ (\ref{2.5})),\cr
    &D_2=|f_0|_{\infty,1,\Omega^t}+|u(0)|_{\infty,\Omega},\ \ f_0=rf_\varphi,\ \ u(0)=rv_\varphi(0)\quad &({\rm see}\ (\ref{2.10})),\cr
    &D_*=\min\{1,D_2\}\quad &({\rm see}\ (\ref{3.1})),\cr
    &D_3={1\over\sqrt{2\nu}}(|\bar F_r|_{6/5,2,\Omega^t}+|\bar F_\varphi|_{6/5,2,\Omega^t})+|\Phi(0)|_{2,\Omega}+|\Gamma(0)|_{2,\Omega}\quad &({\rm see}\ (\ref{3.1})),\cr}
  $$
  where $\bar F_r=F_r/r$, $\bar F_\varphi=F_\varphi/r$,
  $$\eqal{
    &D_4={1\over\sqrt{\nu}}(D_1+D_2+|u_{,z}(0)|_{2,\Omega}+|f_0|_{2,\Omega})\quad &({\rm see}\ (\ref{5.2})),\cr
    &D_5^2=D_1^2(1+D_2)+D_1^2D_2^2+|u_{,r}(0)|_{2,\Omega}^2+|f_0|_{2,\Omega^t}^2\qquad\quad &({\rm see}\ (\ref{5.11})),\cr
    &D_6^2=(D_4+D_5)\|f_\varphi\|_{L_2(0,t;L_3(S_1))}+{1\over\nu}(|F_r|_{6/5,2,\Omega^t}^2\cr
    &\quad+|F_z|_{6/5,2,\Omega^t}^2)+|\omega_r(0)|_{2,\Omega}^2+|\omega_z(0)|_{2,\Omega}^2\quad &({\rm see}\ (\ref{6.2})),\cr
    &D_7=\sqrt{2}D_2^{1/2}|f_\varphi/r|_{\infty,1,\Omega^t}^{1/2}+|v_\varphi(0)|_{\infty,\Omega}\quad &({\rm see}\ (\ref{6.18})),\cr
    &D_8^2=\max\bigg\{{\nu\over 4},{\nu^2\over 8},{27\over 4\nu^3},{1\over 4}\bigg\}\quad &({\rm see}\ (\ref{8.7})).\cr}
  $$
  We emphasize that the global well-posedness of axially symmetric solutions to the Navier-Stokes equations (either in the above setting or on $R^3$) remains an important open problem. We only note a few results on regularity criterions for axially-symmetric solutions to the Navier-Stokes equations (see \cite{CFZ}, \cite{KP}, \cite{NZ}, \cite{NZ1}, \cite{NP1}, \cite{NP2}, \cite{OP}, \cite{P}).
\end{notation}

In \cite{Z1}, \cite{Z2} the second author proved the existence of global regular axially symmetric solutions by the same method as in this paper. However, he needed the following Serrin type restrictions
\begin{equation}
  \psi_1|_{r=0}=0
  \label{1.22}
\end{equation}
and
\begin{equation}
  {|v_\varphi|_{s,\infty,\Omega^t}\over|v_\varphi|_{\infty,\Omega^t}}\ge c_0,
  \label{1.23}
\end{equation}
for any $s>0$ and $c_0$ is a positive constant.

In \cite{Z1} there are assumed periodic boundary conditions on $S_2$ and in \cite{Z2} the same boundary conditions as in this paper are considered.

In \cite{OZ}, O{\.z}a{\'n}ski-Zaj\c{a}czkowski proved the global well-posedness assuming only condition (\ref{1.23}).

In this paper we are able to drop the restriction (\ref{1.22}). Regarding (\ref{1.23}), we introduce the quantity
\begin{equation}
  \lambda(s) :=
  \frac{\|v_\varphi\|_{L^\infty(0,t;L^s(\Omega))}}
  {\|v_\varphi\|_{L^\infty(\Omega\times(0,t))}},
  \qquad s>3.
  \label{1.25}
\end{equation}
The case in which $\lambda(s)$ is bounded from below is compatible with the known estimate of the interaction term. The complementary regime is harmless only when the $L^s$-norm of $v_\varphi$ remains bounded by a suitable threshold. The remaining possibility is a critical concentration regime: the $L^s$-norm of $v_\varphi$ is large, while the ratio between the $L^s$-norm and the $L^\infty$-norm is small.

For fixed constants $A>0$ and $c_0>0$, we define the critical wedge by
\begin{equation}
  W_{A,c_0}
  :=
  \left\{
  t\in(0,T):
  L_s(t)>A,\quad
  \frac{L_s(t)}{M(t)}<c_0
  \right\},
  \label{1.26}
\end{equation}
where
\[
  L_s(t)=\|v_\varphi(t)\|_{L^s(\Omega)},\qquad
  M(t)=\|v_\varphi(t)\|_{L^\infty(\Omega)}.
\]
Outside this set one has either $L_s(t)\leq A$, or $L_s(t)/M(t)\geq c_0$, and the original closure mechanism can be applied. The obstruction is therefore concentrated in $W_{A,c_0}$.

The main result of the paper is a conditional a priori estimate with a critical-wedge residual. This residual measures precisely the part of the nonlinear interaction which cannot be absorbed by the standard energy mechanism when the trajectory enters the critical wedge. Thus the paper does not assert an unconditional global regularity theorem. Rather, it identifies the missing concentration regime and proves that the usual a priori control is recovered whenever the associated wedge residual is finite, and in particular when the trajectory does not enter the critical wedge.

It was demonstrated in \cite{CFZ} that the solution $v$ is controlled by the energy norm of $\Phi$, $\Gamma$,
\begin{equation}
  X_s(t)\colon=\|\Phi\|_{V(\Omega^t)}+\|\Gamma\|_{V(\Omega^t)},
  \label{1.24}
\end{equation}
where the norm $\|\cdot\|_{V(\Omega^t)}$ is defined in Section \ref{s2.1}.

\begin{theorem}[Conditional estimate with a critical wedge residual]\label{t1.2}
  Let $s>3$ and let $v$ be a sufficiently regular axially symmetric solution to the Navier--Stokes problem (\ref{1.1})--(\ref{1.3}). Assume that all quantities collected in Notation \ref{n1.1} are finite. Let
  \[
    E_{W,s}(t):=\chi_{W_{A,c_0}}(t)R_s^I(t),
  \]
  where $R_s^I(t)$ is the non-negative interaction residual defined in Section \ref{s3}. It measures the gap between the actual interaction and its closable bound for the integrand
  \[
    i(t) := \int_{\Omega}\frac{v_\varphi(t)}{r}\Phi(t)\Gamma(t)\,dx,
  \]
  and we denote $\mathcal{I}(t) := \int_0^t |i(\tau)|\,d\tau$.
  Then there exists an increasing positive function $\Psi_{s,A,c_0}$ such that, for every $t\in(0,T)$,
  \begin{equation}
    X_s(t) \leq \Psi_{s,A,c_0}\left(\mathrm{data}, \int_0^t E_{W,s}(\tau)\,d\tau\right).
    \label{1.30}
  \end{equation}
  In particular, if $E_{W,s}(t)\equiv 0$ for $t\in(0,T)$, then $X_s(t)\leq \Psi_{s,A,c_0}(\mathrm{data})$. Thus the original a priori control is recovered outside the critical wedge.
\end{theorem}

\begin{corollary}[No entry into the critical wedge]\label{c1.1}
  Under the assumptions of Theorem \ref{t1.2}, suppose that $W_{A,c_0}=\varnothing$. Equivalently, assume that whenever $\|v_\varphi(t)\|_{L^s(\Omega)}>A$, one has $\frac{\|v_\varphi(t)\|_{L^s(\Omega)}}{\|v_\varphi(t)\|_{L^\infty(\Omega)}}\geq c_0$. Then the critical wedge residual vanishes and
  \[
    X_s(t)\leq \Psi_{s,A,c_0}(\mathrm{data}) \qquad\text{for all }t\in(0,T).
  \]
\end{corollary}

To prove the conditional a priori estimate (\ref{1.30}) for solutions to problem (\ref{1.1})--(\ref{1.3}), we use the expansions (\ref{1.21}); hence sufficiently regular solutions are considered.

\begin{proof}
  Multiplying (\ref{1.11}) by $\Phi$, (\ref{1.12}) by $\Gamma$, integrating the results over $\Omega^t=\Omega\times(0,t)$ and adding, we obtain (see Lemma \ref{l3.1})
  \begin{equation}
    \|\Phi\|_{V(\Omega^t)}^2+\|\Gamma\|_{V(\Omega^t)}^2\le\varphi(\mathrm{data})(1+|v_\varphi|_{\infty,\Omega^t}^{2\delta})(\mathcal{I}(t)+D_3^2),
    \label{1.31}
  \end{equation}
  where
  \begin{equation}
    \mathcal{I}(t)=\int_0^t |i(\tau)|\,d\tau,\qquad i(t) = \intop_{\Omega}{v_\varphi(t)\over r}\Phi(t)\Gamma(t) dx
    \label{1.32}
  \end{equation}
  and data cover all parameters from Notation \ref{n1.1}. Moreover, $\phi$ is an increasing positive function and $\delta$ is small. To prove (\ref{1.31}) we needed $H^3$-elliptic estimates for the modified stream function $\psi_1$ (see (\ref{4.8})).

  To find an estimate for the nonlinear interaction integrand $i(t)$, we use the decomposition of the time interval based on the critical wedge $W_{A,c_0}$. As described in Section \ref{s3}, the $L^s$ norm of $v_\varphi$ is bounded by $D_{s,A,c_0}$ outside $W_{A,c_0}$, while inside the wedge the uncontrolled portion is measured by the critical-wedge residual $E_{W,s}$.
  
  From Lemma \ref{l3.3} with $\sigma$ replaced by $s$, we have the conditional pointwise estimate
  \begin{equation}
    |i(t)|\le D_2^d D_{s,A,c_0}^{1-d}\|\Phi\|_{L_2(\Omega)}^{\alpha_0}\|\Gamma\|_{L_2(\Omega)}^{\alpha_0}\cdot \|\nabla\Phi\|_{L_2(\Omega)}^{1-\alpha_0}\|\nabla\Gamma\|_{L_2(\Omega)}^{1-\alpha_0} + R_s^I(t),
    \label{1.40}
  \end{equation}
  where $\alpha_0={(s-3)(1-d)\over 2s}$, so $s>3$ and $d\in(0,1)$, and $R_s^I(t)$ is the interaction residual.

  Using (\ref{1.40}) in (\ref{1.31}), integrating over time, and exploiting the notation $X_s$ we obtain
  \begin{equation}\eqal{
    X_s^2(t)&\le\phi(\mathrm{data})(1+\|v_\varphi\|_{L_\infty(\Omega^t)}^{2\delta})\cdot\cr
    &\quad\cdot \bigg(D_2^d D_{s,A,c_0}^{1-d}\|\Phi\|_{L_2(\Omega^t)}^{\alpha_0}X_s(t)^{2-\alpha_0}+D_3^2\bigg) + \int_0^t E_{W,s}(\tau)\,d\tau,\cr}
    \label{1.41}
  \end{equation}
  where $\delta$ is small, and $E_{W,s}$ absorbs the contribution of the wedge residual.

  Now, we recall the most important inequality in this paper which is called as order reduction estimate. Hence, formula (\ref{6.1}) in Lemma \ref{l6.1} has the form
  \begin{equation}\|\Phi\|_{L_2(\Omega^t)}^2\le\phi(\mathrm{data})(1+\|v_\varphi\|_{L_\infty(\Omega^t)}^{2\delta})(X_s(t)+1),
    \label{1.42}
  \end{equation}
  where we replaced $\varepsilon_0$ in (\ref{6.1}) by $\delta$ which is small.
  \goodbreak

  To prove (\ref{1.42}) we needed $H^2-H^3$ elliptic estimates for $\psi_1$ in Section~\ref{s4},
  \begin{equation}
    \|\psi_1\|_{L_2(0,t;H^3(\Omega))}\le c\|\Gamma_{,z}\|_{L_2(\Omega^t)}.
    \label{1.43}
  \end{equation}
  Next, we needed estimates for swirl $u=rv_\varphi$. Hence Lemma \ref{l2.5} yields
  \begin{equation}
    \|u\|_{L_\infty(\Omega^t)}\le\phi(\mathrm{data})
    \label{1.44}
  \end{equation}
  and Lemma \ref{l5.1} gives the energy type estimates for $u_{,r}$ and $u_{,z}$,
  \begin{equation}
    \|u_{,r}\|_{V(\Omega^t)}+\|u_{,z}\|_{V(\Omega^t)}\le\phi(\mathrm{data}).
    \label{1.45}
  \end{equation}
  To prove (\ref{1.42})--(\ref{1.45}) we use the Liu-Wang expansions (\ref{1.21}) to estimate terms derived from integration by parts with respect to $r$.

  Using (\ref{1.42}) in (\ref{1.41}) yields
  \begin{equation}
    \begin{aligned}
      X_s^2(t)&\le\phi(\mathrm{data})(1+\|v_\varphi\|_{L_\infty(\Omega^t)}^{2\delta})\cdot\\
      &\cdot[D_2^d D_{s,A,c_0}^{1-d}(1+\|v_\varphi\|_{L_\infty(\Omega^t)}^{2\delta})^{\alpha_0/2}(X_s(t)^{\alpha_0/2}+1)X_s(t)^{2-\alpha_0}+1]\\
      &+ \int_0^t E_{W,s}(\tau)\,d\tau,
    \end{aligned}
    \label{1.46}
  \end{equation}
  where $\delta$ is small.

  In Lemma \ref{l6.2} (see (\ref{6.17})) we proved the inequality
  \begin{equation}
    \|v_\varphi\|_{L_\infty(\Omega^t)}\le\phi(\mathrm{data})(X_s(t)^{3/4}+1).
    \label{1.47}
  \end{equation}
  Using (\ref{1.47}) in (\ref{1.46}) yields
  \begin{equation}\eqal{
    X_s^2(t)&\le\varphi_{s,A,c_0}(\mathrm{data})(1+X_s(t)^{{3\over 2}\delta})\cdot\cr
    &\quad\cdot[(1+X_s(t)^{{3\over 2}\delta{\alpha_0\over 2}})X_s(t)^{2-{\alpha_0/2}}+1] + \int_0^t E_{W,s}(\tau)\,d\tau.\cr}
    \label{1.48}
  \end{equation}
  Since $\delta$ is small and $\alpha_0>0$ we obtain from (\ref{1.48}) by the Young inequality the conditional estimate
  \begin{equation}
    X_s(t)\le\Psi_{s,A,c_0}\bigg(\mathrm{data}, \int_0^t E_{W,s}(\tau)\,d\tau\bigg).
    \label{1.49}
  \end{equation}
  This yields (\ref{1.30}) and concludes the proof.
\end{proof}

\begin{theorem}\label{t1.3}
  Let the assumptions of Theorem \ref{t1.2} hold. Let $f\in W_2^{2,1}(\Omega^t)$ and $v_0\in H^3(\Omega)$. Then the higher regularity norms are bounded conditionally by the wedge residual:
  \begin{equation}\eqal{
    \|v\|_{W_2^{4,2}(\Omega^t)}&+\|\nabla p\|_{W_2^{2,1}(\Omega^t)}\cr
    &\le\Psi_1\bigg(\mathrm{data},\|f\|_{W_2^{2,1}(\Omega^t)},\|v_0\|_{H^3(\Omega)}, \int_0^t E_{W,s}(\tau)\,d\tau\bigg).\cr}
    \label{1.50}
  \end{equation}
\end{theorem}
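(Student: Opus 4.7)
The plan is to use the global a priori bound $X(t)\le\phi(\text{data})$ coming from Theorem \ref{t1.2} as the base of a regularity bootstrap. First I would translate that bound into low-order information on $v$ itself: since $X(t)$ controls $\Phi=\omega_r/r$ and $\Gamma=\omega_\varphi/r$ in $V(\Omega^t)$, the $H^2$--$H^3$ elliptic estimate (\ref{1.43}) for the modified stream function $\psi_1$ solving (\ref{1.18}), together with the representation (\ref{1.19}), produces a bound
$$
\|v\|_{L_\infty(0,t;H^1(\Omega))}+\|v\|_{L_2(0,t;H^2(\Omega))}\le\phi(\text{data}).
$$
Combined with the swirl bounds (\ref{1.44})--(\ref{1.45}) and the energy identity for $\omega$, this gives enough control on $v\cdot\nabla v$ to start a Stokes-type bootstrap.

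Next I would view (\ref{1.1}) as the linear Stokes problem
$$
\partial_t v-\nu\Delta v+\nabla p=f-v\cdot\nabla v,\qquad \divv v=0,
$$
with the mixed boundary conditions coming from (\ref{1.2}) (Dirichlet for $v_r,v_\varphi$ on $S_1$; $v_z=v_{\varphi,z}=0$ and the stress/vorticity relation $\omega_\varphi=0$ on $S_2$). The plan is to iterate the maximal-regularity theory for this Stokes system in anisotropic Sobolev spaces $W_p^{2,1}$ with mixed norm (in Sections following, referenced in the introduction as the required tool), upgrading the regularity of $v$ one level at a time. In round one, using that $v\in L_\infty L^2\cap L_2 H^2$ implies $v\cdot\nabla v\in L_2(\Omega^t)$ by Ladyzhenskaya-type inequalities, I obtain $v\in W_2^{2,1}(\Omega^t)$ and $\nabla p\in L_2(\Omega^t)$. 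In round two, Sobolev embeddings give $v\in L_\infty L^6\cap L_2 W^{1,6}$ and $\nabla v$ in sufficient mixed-norm classes, so that $v\cdot\nabla v\in W_2^{1,1/2}(\Omega^t)$ and one may invoke the Stokes theorem with mixed norm to pass to $v\in W_2^{3,3/2}(\Omega^t)$ with $\nabla p\in W_2^{1,1/2}(\Omega^t)$. A third round, this time using $W_2^{2,1}$-regularity of $f$ and the fact that $v\cdot\nabla v\in W_2^{2,1}(\Omega^t)$, yields the target estimate (\ref{1.50}).

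For the initial trace $v|_{t=0}=v_0$ at each step of the bootstrap, I would invoke the trace theorems for Besov spaces with mixed norm (the second tool flagged in the introduction); these provide the correct intermediate regularity of $v(0)$ in $H^3(\Omega)$ and of its time derivatives reconstructed from the equation, and they supply the right compatibility conditions on $S$. The axial-symmetry expansions (\ref{1.21}) near $r=0$ will be used, as in the earlier sections, to handle the singular weights $1/r$ and $1/r^2$ that enter cylindrical Laplacians and forcing terms; in particular, $\omega_\varphi/r$ and $v_r/r$ remain in the appropriate Sobolev classes thanks to those expansions.

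The main obstacle will be the Stokes step itself: the boundary conditions (\ref{1.2}) are of mixed type (Dirichlet on $S_1$ and a Navier/slip-like combination on $S_2$), and they meet at the edge $\partial S_1\cap\partial S_2$, so one must check that the data $(f-v\cdot\nabla v,v_0)$ satisfies the higher-order compatibility conditions at that edge in order to invoke maximal regularity at the $W_2^{4,2}$ level. The axisymmetric structure, together with the vanishing of $v_r,v_\varphi$ on $S_1$ and of $v_z,v_{\varphi,z},\omega_\varphi$ on $S_2$, ensures that the compatibility conditions reduce to identities already built into (\ref{1.2}) and (\ref{1.6}), so no extra hypothesis beyond $v_0\in H^3(\Omega)$ and $f\in W_2^{2,1}(\Omega^t)$ is needed. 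Once this is verified, the three-round bootstrap closes and (\ref{1.50}) follows.
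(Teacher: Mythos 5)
Your overall strategy---convert the bound on $X(t)$ into low-order information on $v$, then bootstrap through the Stokes system with mixed-norm maximal regularity---is exactly the paper's strategy (Section \ref{s7}, Lemma \ref{l7.8}). However, two concrete steps in your execution do not close. First, the starting regularity you claim, $v\in L_\infty(0,t;H^1(\Omega))\cap L_2(0,t;H^2(\Omega))$, is more than the paper's machinery delivers: the $H^3$ elliptic estimates (\ref{4.7})--(\ref{4.8}) control only the third derivatives of $\psi_1$ containing at least one $z$-derivative (there is no bound on $\psi_{1,rrr}$, hence none on $v_{z,rr}$ in $L_2(\Omega^t)$), and the swirl bounds give $u_{,r},u_{,z}\in L_\infty(0,t;L_2)$ but not the weighted quantities $u_{,r}/r$, $u/r^2$ uniformly in time that one would need for $v_\varphi\in L_\infty(0,t;H^1)$. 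The paper therefore starts from the genuinely weaker information $v'=(v_r,v_z)\in L_\infty(0,t;L_6(\Omega))$ (from $\psi_1\in L_\infty(0,t;H^2)$ and (\ref{1.19})) together with the energy bound $\nabla v\in L_2(\Omega^t)$, which places the nonlinearity only in $L_2(0,t;L_{3/2}(\Omega))$, as in (\ref{1.53}).

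Second, and independently of the starting point, your three rounds confined to $L_2$-based spaces ($W_2^{2,1}\to W_2^{3,3/2}\to W_2^{4,2}$) do not survive the exponent bookkeeping. Even granting $v\in W_2^{2,1}(\Omega^t)$ after round one, the gradient of the nonlinearity contains $v\cdot\nabla^2v$ with $v\in L_\infty(0,t;L_6)$ and $\nabla^2v\in L_2(\Omega^t)$, which lands only in $L_2(0,t;L_{3/2}(\Omega))$, and $\nabla v\otimes\nabla v$ with $\nabla v\in L_{10/3}(\Omega^t)$, which lands only in $L_{5/3}(\Omega^t)$; neither is in $L_2(\Omega^t)$, so the claim $v\cdot\nabla v\in W_2^{1,1/2}(\Omega^t)$ in round two is unjustified. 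This is precisely why the paper climbs the spatial integrability ladder through non-$L_2$ mixed norms---$W_{3/2,2}^{2,1}$, then $W_{30/17,5/2}^{2,1}$, then $W_{15/7,10/3}^{2,1}$, then $W_{5'}^{2,1}$ (which finally gives $v\in L_\infty(\Omega^t)$ and $\nabla v\in L_q(\Omega^t)$ for all finite $q$), and only then passes to $W_{10/3}^{3,3/2}$ and $W_2^{4,2}$ via (\ref{7.26})--(\ref{7.28}). To repair your argument you should either adopt this incremental exponent scheme or first establish $v\in L_\infty(\Omega^t)$ and $\nabla v\in L_q(\Omega^t)$ for large $q$ by some other means before attempting to differentiate the nonlinearity. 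Your remarks on edge compatibility and on the expansions (\ref{1.21}) are reasonable but do not substitute for closing the integrability gap.
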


\begin{proof}
  From (\ref{1.30}) we have the conditional bound
  $$
    \|\Gamma\|_{V(\Omega^t)}\le\Psi_{s,A,c_0}\bigg(\mathrm{data}, \int_0^t E_{W,s}(\tau)\,d\tau\bigg)\equiv\Psi_1.
  $$
  Then solutions to (\ref{1.18}) satisfy
  $$
    \|\psi_1\|_{L_\infty(0,t;H^2(\Omega))}\le c\Psi_1.
  $$
  Using (\ref{1.19}) yields
  \begin{equation}
    \|v'\|_{L_\infty(0,t;L_6(\Omega))}\le c\Psi_1,
    \label{1.51}
  \end{equation}
  where $v'=(v_r,v_z)$.

  Lemma \ref{l2.4} gives
  \begin{equation}
    \|\nabla v\|_{L_2(\Omega^t)}\le D_1.
    \label{1.52}
  \end{equation}
  Estimates (\ref{1.51}) and (\ref{1.52}) imply
  \begin{equation}
    \|v'\cdot\nabla v\|_{L_2(0,t;L_{3/2}(\Omega))}\le c\Psi_1 D_1.
    \label{1.53}
  \end{equation}
  To increase regularity we consider the Stokes system
  \begin{equation}\eqal{
      &v_{,t}-\nu\Delta v+\nabla p=-v'\cdot\nabla v+f\quad &{\rm in}\ \ \Omega^T,\cr
      &\divv v=0\quad &{\rm in}\ \ \Omega^T,\cr
      &v\cdot\bar n=0,\ \ v_{z,r}=0,\ \ v_\varphi=0\quad &{\rm on}\ \ S_1^T,\cr
      &v\cdot\bar n=0,\ \ v_{r,z}=0,\ \ v_{\varphi,z}=0\quad &{\rm on}\ \ S_2^T,\cr
      &v|_{t=0}=v_0\quad &{\rm in}\ \ \Omega.\cr}
    \label{1.54}
  \end{equation}
  In view of (\ref{1.53}) and the theory of solvability of the Stokes system (\ref{1.54}) in Sobolev spaces with the mixed norm (see \cite{MS}) we have
  \begin{equation}
    \|v\|_{W_{3/2,2}^{2,1}(\Omega^t)}\le c(\|f\|_{L_2(0,t;L_{3/2}(\Omega))}+ \|v_0\|_{B_{3/2,2}^1(\Omega)}+\Psi_1D_1)\equiv d_2.
    \label{1.55}
  \end{equation}
  In view of the imbedding
  $$
    \|\nabla v\|_{L_{5/2}(\Omega^t)}\le c\|v\|_{W_{3/2,2}^{2,1}(\Omega^t)}
  $$
  we obtain
  \begin{equation}
    \|v'\cdot\nabla v\|_{L_{5/2}(0,t;L_{30\over 17}(\Omega))}\le c\Psi_1d_2.
    \label{1.56}
  \end{equation}
  Comparing (\ref{1.56}) with (\ref{1.53}) we see an increasing of regularity. Continuing the procedure we prove (\ref{1.50}) (see Section \ref{s7}). This concludes the proof.
\end{proof}

\begin{remark}\label{r1.4}
  To prove Theorem \ref{t1.2} the Liu-Wang expansions (\ref{1.21}) are used. The expansions exist for sufficiently regular solutions to problem (\ref{1.1})--(\ref{1.3}). Estimate (\ref{1.30}) does not imply such regularity. Hence Theorem ~\ref{t1.3} yields such regularity of solutions to (\ref{1.1})--(\ref{1.3}) that expansions (\ref{1.21}) can exist.
\end{remark}

\begin{remark}\label{r1.5}
  According to O.A. Ladyzhenshaya \cite{L} and partial regularity theory of Caffarelli, Kohn, Nirenberg \cite{CKN} any singularity of axisymmetric solutions to (\ref{1.1})--(\ref{1.3}) must occur on the axis of symmetry only. The methods presented in this paper make use of regular solutions for which there are no singularities at the axis of symmetry, and so the expansions (\ref{1.21}) are valid. Whether it is possible to control $X(t)$ without exploiting these expansions, remains an interesting open problem.
\end{remark}

\section{Preliminaries}\label{s2}
\subsection{Notation}\label{s2.1}

We will use the following notation for Lebesgue spaces
$$\eqal{
    &|u|_{p,\Omega}\colon=\|u\|_{L_p(\Omega)},\quad |u|_{p,\Omega^t}\colon=\|u\|_{L_p(\Omega^t)},\cr
    &|u|_{p,q,\Omega^t}\colon=\|u\|_{L_q(0,t;L_p(\Omega))},\cr}
$$
where $p,q\in[1,\infty]$. We use standard definition of Sobolev spaces $W_p^s(\Omega)$, and we set $H^s(\Omega)=W_2^s(\Omega)$, $s\in\N\cup\{0\}$, and
$$\eqal{
    &\|u\|_{s,\Omega}\colon=\|u\|_{H^s(\Omega)},\quad \|u\|_{s,p,\Omega}\colon=\|u\|_{W_p^s(\Omega)},\cr
    &\|u\|_{k,p,q,\Omega^t}\colon=\|u\|_{L_q(0,t;W_p^k(\Omega))},\cr
    &\|u\|_{k,p,\Omega^t}\colon=\|u\|_{k,p,p,\Omega^t},\ \ k\in\N\cup\{0\},\cr
    &\|u\|_{V(\Omega^t)}\colon=\|u\|_{L_\infty(0,t;L_2(\Omega))}+\|\nabla u\|_{L_2(\Omega^t)}.\cr}
$$
Assume that $\phi$ always denotes an increasing positive function which changes its form from formula to formula.

\subsection{Inequalities}\label{s2.2}

\begin{lemma}[Hardy inequality, see Lemma 2.16 in \cite{BIN}]\label{l2.1}
  Let $p\in[1,\infty]$, $\beta\not=1/p$, and let $F(x)\colon=\intop_0^xf(y)dy$ for $\beta>1/p$ and $F(x)\colon=\intop_x^\infty f(y)dy$ for $\beta<1/p$. Then
  \begin{equation}
    |x^{-\beta}F|_{p,\R_+}\le{1\over|\beta-{1\over p}|}|x^{-\beta+1}f|_{p,\R_+}.
    \label{2.1}
  \end{equation}
\end{lemma}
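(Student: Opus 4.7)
The plan is to reduce the estimate to Minkowski's integral inequality via the natural scaling substitution $y=ux$. For the case $\beta>1/p$ I would write
$$
F(x)=\intop_0^xf(y)\,dy=x\intop_0^1f(ux)\,du,
$$
so that
$$
x^{-\beta}F(x)=x^{1-\beta}\intop_0^1f(ux)\,du.
$$
Taking the $L_p(\R_+,dx)$-norm of both sides and bringing the norm inside the integral over $u$ by Minkowski's integral inequality gives a bound
$$
|x^{-\beta}F|_{p,\R_+}\le\intop_0^1\big|x^{1-\beta}f(ux)\big|_{p,\R_+}\,du.
$$
A further substitution $v=ux$ in the inner norm rescales it to $u^{\beta-1-1/p}|x^{1-\beta}f|_{p,\R_+}$, leaving the elementary integral $\intop_0^1u^{\beta-1-1/p}\,du=1/(\beta-1/p)$, which is precisely where the assumption $\beta>1/p$ is needed for convergence at $u=0$ and which produces the claimed constant.

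For the complementary case $\beta<1/p$ I would carry out the mirror argument with $F(x)=\intop_x^\infty f(y)\,dy=x\intop_1^\infty f(ux)\,du$; Minkowski followed by the same rescaling leaves $\intop_1^\infty u^{\beta-1-1/p}\,du=1/(1/p-\beta)$, where now convergence at $u=\infty$ uses $\beta<1/p$. The two cases thus fuse into the single bound with constant $1/|\beta-1/p|$. The endpoint $p=\infty$ (for $\beta>0$, say) is direct: from $|f(y)|\le My^{\beta-1}$ with $M=|x^{1-\beta}f|_{\infty,\R_+}$ one gets $|F(x)|\le My^\beta/\beta$ after an elementary integration, and the $\beta<0$ sub-case is analogous.

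There is no serious obstacle here; the only point requiring care is the bookkeeping of exponents in the change of variables $v=ux$ in $L_p$-norms, and the verification that the integrability threshold of the resulting power integral matches the hypothesis $\beta\not=1/p$. All these are routine, and the sharp constant $1/|\beta-1/p|$ drops out automatically from the elementary integral of $u^{\beta-1-1/p}$ over $(0,1)$ or $(1,\infty)$.
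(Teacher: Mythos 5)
Your argument is correct, but note that the paper does not prove this lemma at all: it is quoted verbatim as Lemma 2.16 of \cite{BIN} and used as a black box. Your proposal therefore supplies a self-contained proof where the paper offers only a citation. The route you take — writing $F(x)=x\intop_0^1f(ux)\,du$ (resp. $x\intop_1^\infty f(ux)\,du$), applying Minkowski's integral inequality, and rescaling $v=ux$ so that the inner norm factors as $u^{\beta-1-1/p}|x^{1-\beta}f|_{p,\R_+}$ — is the standard scaling proof of Hardy's inequality, and the exponent bookkeeping checks out: $\intop_0^1u^{\beta-1-1/p}du=1/(\beta-1/p)$ converges exactly when $\beta>1/p$, and $\intop_1^\infty u^{\beta-1-1/p}du=1/(1/p-\beta)$ exactly when $\beta<1/p$, which reproduces the constant $1/|\beta-1/p|$. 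Your separate treatment of $p=\infty$ (where Minkowski's integral inequality is awkward to invoke) via the pointwise bound $|f(y)|\le My^{\beta-1}$ and elementary integration is also correct. The only cosmetic point is that one should run the argument with $|f|$ in place of $f$, since $|F(x)|\le\intop_0^x|f(y)|\,dy$; this changes nothing. What your approach buys is independence from the reference; what the paper's citation buys is brevity and the full generality of the statement in \cite{BIN}.
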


\begin{lemma}[Sobolev interpolation, see Sect. 15 in \cite{BIN}]\label{l2.2}
  Let $\theta$ satisfy the equality
  \begin{equation}
    {n\over p}-r=(1-\theta){n\over p_1}+\theta\bigg({n\over p_2}-l\bigg),\quad {r\over l}\le\theta\le 1,
    \label{2.2}
  \end{equation}
  where $1\le p_1\le\infty$, $1\le p_2\le\infty$, $0\le r<l$.\\
  \goodbreak
  \noindent
  Then the interpolation holds
  \begin{equation}
    \sum_{|\alpha|=r}|D^\alpha f|_{p,\Omega}\le c|f|_{p_1,\Omega}^{1-\theta}\|f\|_{W_{p_2}^l(\Omega)}^\theta,
    \label{2.3}
  \end{equation}
  where $\Omega\subset\R^n$ and $D^\alpha f=\partial_{x_1}^{\alpha_1}\ldots\partial_{x_n}^{\alpha_n}f$, $|\alpha|=\alpha_1+\alpha_2+\ldots+\alpha_n$.
\end{lemma}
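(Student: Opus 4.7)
The plan is to reduce (\ref{2.3}) to an inequality on the whole space $\R^n$ and then establish it there by a Littlewood--Paley style frequency splitting, i.e., the classical Gagliardo--Nirenberg argument. Under the standing hypothesis of \cite{BIN} that $\Omega$ has (at least) the cone property, there is a bounded linear extension operator $E:W_{p_2}^l(\Omega)\cap L_{p_1}(\Omega)\to W_{p_2}^l(\R^n)\cap L_{p_1}(\R^n)$. Applying the desired estimate to $Ef$ on $\R^n$ and restricting to $\Omega$ would give (\ref{2.3}), so it suffices to work on $\R^n$.

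On $\R^n$ I would first verify necessity of (\ref{2.2}) by scaling: testing the putative inequality against $f_\lambda(x)=f(\lambda x)$ and sending $\lambda\to 0,\infty$ forces the exponent balance (\ref{2.2}) and, because no exponent on the left can be negative, the constraint $r/l\le \theta\le 1$. For the sufficiency, fix $|\alpha|=r$ and $f\in \mathcal{S}(\R^n)$, and split $f=f_{\le N}+f_{>N}$ in Fourier frequency at a cut-off $N>0$ to be optimized. Bernstein's inequality (equivalently Hardy--Littlewood--Sobolev) bounds the low-frequency piece by
$$
|D^\alpha f_{\le N}|_{p,\R^n}\le cN^{r-n(1/p_1-1/p)}|f|_{p_1,\R^n},
$$
while on the high-frequency piece one may trade $l$ derivatives to obtain
$$
|D^\alpha f_{>N}|_{p,\R^n}\le cN^{r-l-n(1/p_2-1/p)}\|f\|_{W_{p_2}^l(\R^n)}.
$$
Choosing $N$ so that the two bounds balance, and reading off the resulting powers via (\ref{2.2}) as $1-\theta$ and $\theta$, yields (\ref{2.3}); a density argument from $\mathcal{S}(\R^n)$ to $W_{p_2}^l(\R^n)\cap L_{p_1}(\R^n)$ completes the proof on $\R^n$.

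The hard part will be the endpoint behaviour. When $p_1$ or $p_2$ equals $\infty$, the Bernstein estimates have to be replaced by their $L_\infty$ analogues via Young's inequality with the appropriate Schwartz kernel, which is the delicate point. The endpoint $\theta=1$ reduces to the Sobolev embedding $W_{p_2}^l(\R^n)\hookrightarrow W_p^r(\R^n)$, which is guaranteed by (\ref{2.2}) and handled directly, while $\theta=r/l$ requires that the low-frequency estimate remain valid in the limit where the gain from $N$ just cancels. A conceptually cleaner alternative that bypasses these issues is real interpolation: $W_p^r$ arises as the real interpolation space between $L_{p_1}$ and $W_{p_2}^l$ with parameter $\theta$ determined by (\ref{2.2}), and (\ref{2.3}) is then a direct consequence of the defining K-functional inequality for the intermediate space. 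Either route gives the constant $c$ depending only on $n,p,p_1,p_2,r,l$ and on the extension operator norm for $\Omega$.
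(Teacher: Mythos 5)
The paper does not prove this lemma at all: it is quoted verbatim from Besov--Il'in--Nikolskii \cite{BIN}, whose own derivation rests on integral representations of $f$ over cones/horns in $\Omega$ and Young-type estimates for the resulting kernels, not on Fourier analysis. Your extension-plus-frequency-splitting argument is therefore a genuinely different (and perfectly legitimate) route: it buys a cleaner conceptual picture and an easy scaling heuristic for the exponent balance \eqref{2.2}, at the price of needing a Stein-type extension operator bounded simultaneously on $L_{p_1}(\Omega)$ and $W_{p_2}^l(\Omega)$ (available here, since the cylinder is Lipschitz) and of the endpoint care you already flag when $p_1$ or $p_2$ is $1$ or $\infty$, where Mikhlin-multiplier arguments must be replaced by convolution with explicit $L_1$ kernels. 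Two small corrections: the Bernstein exponents should read $N^{r+n(1/p_1-1/p)}$ for the low frequencies and $N^{r-l+n(1/p_2-1/p)}$ for the high frequencies (your signs on the $n(\cdot)$ terms are flipped; with the correct signs the balancing of the two bounds reproduces exactly the $\theta$ of \eqref{2.2}). Also, the real-interpolation ``shortcut'' is not quite free: identifying $W_p^r$ as of class $J(\theta)$ between $L_{p_1}$ and $W_{p_2}^l$ is essentially equivalent to the inequality \eqref{2.3} itself unless one imports known interpolation identities for Besov/Sobolev scales, so it is better regarded as a restatement than as an independent proof. With the sign correction, your main argument is sound.
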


\begin{lemma}[Hardy interpolation, see Lemma 2.4 in \cite{CFZ}]\label{l2.3}
  Let $f\in C^\infty((0,R)\times(-a,a))$, $f|_{r\ge R}=0$. Let $1<p\le 3$, $0\le s\le p$, $s<2$, $q\in [p,{p(3-s)\over 3-p}]$. then there exists a positive constant $c=c(p,s)$ such that
  \begin{equation}
    \bigg(\intop_\Omega{|f|^q\over r^s}dx\bigg)^{1/q}\le c|f|_{p,\Omega}^{{3-s\over q}-{3\over p}+1} |\nabla f|_{p,\Omega}^{{3\over p}-{3-s\over q}},
    \label{2.4}
  \end{equation}
  where $f$ is a smooth function vanishing on $S$ and does not depend on $\varphi$.
\end{lemma}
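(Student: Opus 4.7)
The plan is to establish this Hardy--interpolation bound as a Gagliardo--Nirenberg type inequality in the weighted Lebesgue scale $L^q(\Omega;r^{-s}\,dx)$. A direct check shows the two exponents on the right-hand side sum to $1$ and that the overall estimate is invariant under the scaling $f(x)\mapsto f(\lambda x)$ with the codimension-two weight $r^{-s}$, which motivates treating the claim as an interpolation between two natural endpoint cases.

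I would prove it by interpolating between $q=p$ and $q=p^\ast(s):=p(3-s)/(3-p)$ (with the convention $p^\ast(s)=\infty$ when $p=3$). The lower endpoint is the weighted Hardy--Gagliardo--Nirenberg inequality
$$
\bigl(\intop_\Omega |f|^p r^{-s}\,dx\bigr)^{1/p}\le c\,|f|_{p,\Omega}^{(p-s)/p}\,|\nabla f|_{p,\Omega}^{s/p},
$$
which I would obtain by applying Lemma~\ref{l2.1} in the $r$-direction with $\beta=(s-1)/p$ (the hypothesis $s<2$ is precisely what forces $\beta<1/p$), integrating by parts using $f|_{r=R}=0$, and then rebalancing the resulting weight $r^{p-s}$ against $|f|_{p,\Omega}$ and $|\nabla f|_{p,\Omega}$ via H\"older with exponents $p/(p-s)$ and $p/s$ (here the constraint $s\le p$ enters, to keep these exponents in range). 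The upper endpoint is the weighted Hardy--Sobolev (Caffarelli--Kohn--Nirenberg) inequality
$$
\bigl(\intop_\Omega |f|^{p^\ast(s)} r^{-s}\,dx\bigr)^{1/p^\ast(s)}\le c\,|\nabla f|_{p,\Omega},
$$
which I would prove by applying the classical $\R^3$ Sobolev embedding to the shifted function $g:=r^{-s/p^\ast(s)}f$ and absorbing the extra $r^{-s/p^\ast(s)-1}|f|$ term coming from $\nabla g$ via a further use of Lemma~\ref{l2.1} (again the strict bound $s<2$ is essential for the absorption).

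With both endpoints in hand, the general case follows by log-convexity of $L^q(\Omega;r^{-s}\,dx)$: for $q\in(p,p^\ast(s))$ and $\mu$ defined by $1/q=\mu/p+(1-\mu)/p^\ast(s)$,
$$
\bigl(\intop_\Omega|f|^q r^{-s}\,dx\bigr)^{1/q}\le\bigl(\intop_\Omega|f|^p r^{-s}\,dx\bigr)^{\mu/p}\bigl(\intop_\Omega|f|^{p^\ast(s)} r^{-s}\,dx\bigr)^{(1-\mu)/p^\ast(s)},
$$
and substituting the two endpoint estimates and doing the arithmetic reproduces exactly the exponents $(3-s)/q-3/p+1$ on $|f|_{p,\Omega}$ and $3/p-(3-s)/q$ on $|\nabla f|_{p,\Omega}$ asserted by the lemma. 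The main obstacle I expect is the upper endpoint when $p\ge 2$: the pure codimension-two Hardy inequality $|f/r|_{p,\Omega}\le c|\nabla f|_{p,\Omega}$ fails in that range, so the absorption step in the CKN argument has to exploit both the strict inequality $s<2$ and the Dirichlet-type vanishing $f|_{r=R}=0$ in an essential way; the boundary case $p=3$ (where $p^\ast(s)=\infty$) would be handled separately as an $L^\infty$ endpoint via direct integration of $\nabla f$ along rays from the lateral boundary.
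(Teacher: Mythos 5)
The paper does not actually prove this lemma: it is quoted verbatim from Lemma~2.4 of \cite{CFZ}, so there is no in-paper argument to compare yours against. Your overall architecture --- prove endpoint inequalities at $q=p$ and at $q=p(3-s)/(3-p)$, then use log-convexity of $q\mapsto(\intop_\Omega|f|^qr^{-s}dx)^{1/q}$ --- is a standard and legitimate route to (\ref{2.4}), and your exponent bookkeeping is correct: with $1/q=\mu/p+(1-\mu)(3-p)/(p(3-s))$ one finds $\mu(p-s)/p=(3-s)/q-3/p+1$, which is exactly the exponent on $|f|_{p,\Omega}$ in the statement, so the lemma does follow once both endpoints are established.

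The gap is in the endpoints themselves, and it sits precisely in the range $2\le p\le 3$ that this paper actually needs (Lemma~\ref{l3.3} invokes the present lemma with $p=2$). For the lower endpoint, the one-dimensional Hardy step is fine and yields $\intop_\Omega|f|^pr^{-s}dx\le c\intop_\Omega|\partial_rf|^pr^{p-s}dx$, but the subsequent ``rebalancing via H\"older with exponents $p/(p-s)$ and $p/s$'' does not produce $|f|_{p,\Omega}^{p-s}|\nabla f|_{p,\Omega}^{s}$: splitting $|\partial_rf|^pr^{p-s}=|\partial_rf|^s\cdot(|\partial_rf|\,r)^{p-s}$ leaves the factor $\big(\intop_\Omega|\partial_rf|^pr^p\,dx\big)^{(p-s)/p}$, which is not controlled by $|f|_{p,\Omega}^{p-s}$ (take $f$ rapidly oscillating). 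The clean way to get the interpolated form --- write $|f|^pr^{-s}=(|f|/r)^s|f|^{p-s}$, apply H\"older, and use the codimension-two Hardy inequality $|f/r|_{p,\Omega}\le c|\nabla f|_{p,\Omega}$ --- works only for $p<2$, as you yourself observe in connection with the upper endpoint; for $p\ge 2$ both endpoints require a genuinely different argument that exploits $s<2$ (for instance: slice in $z$ and apply in each disc the two-dimensional point-weight Caffarelli--Kohn--Nirenberg inequality $\||y|^{-s/p}g\|_{L^p}\le c\|g\|_{L^p}^{1-s/p}\|\nabla g\|_{L^p}^{s/p}$, valid because the weight exponent $s$ is below the cross-sectional dimension $2$, then integrate in $z$; the upper endpoint is the cylindrical Hardy--Sobolev--Maz'ya inequality). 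As written, your proposal defers exactly the two inequalities that carry all the analytic content, so it is a correct reduction rather than a proof; to complete it you must either prove the endpoints in the full range $1<p\le3$ or, as the authors do, simply cite \cite{CFZ}.
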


\subsection{Basic estimates}\label{s2.3}

\begin{lemma}[see Lemma 2.2 in \cite{Z1, Z2}]\label{l2.4}
  Let $f\in L_{2,1}(\Omega^t)$, $v(0)\in L_2(\Omega)$. Then solutions to (\ref{1.1})--(\ref{1.3}) satisfy
  \begin{equation}\eqal{
      &|v(t)|_{2,\Omega}^2+\nu|\nabla v|_{2,\Omega^t}^2+\nu\intop_\Omega\bigg({v_r^2\over r^2}+{v_\varphi^2\over r^2}\bigg)dxdt'\cr
      &\le 3|f|_{2,1,\Omega^t}^2+2|v(0)|_{2,\Omega}^2\equiv D_1^2.\cr}
    \label{2.5}
  \end{equation}
\end{lemma}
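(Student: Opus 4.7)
The plan is to derive a standard energy identity in cylindrical coordinates, by testing the scalar equations (\ref{1.7}) against $v_r$, $v_\varphi$, $v_z$ respectively and summing. In more detail, I would multiply $(\ref{1.7})_1$ by $v_r$, $(\ref{1.7})_2$ by $v_\varphi$ and $(\ref{1.7})_3$ by $v_z$ and integrate over $\Omega$ with respect to $dx=r\,dr\,dz\,d\varphi$. The swirl contributions $-v_\varphi^2 v_r/r$ and $v_r v_\varphi^2/r$ from the first two equations cancel exactly. The convective term reduces to $\int_\Omega v\cdot\nabla(|v|^2/2)\,dx$, which vanishes after integration by parts using $\divv v=0$ (equivalently $(rv_r)_{,r}+(rv_z)_{,z}=0$) together with $v_r|_{S_1}=0$ and $v_z|_{S_2}=0$ from (\ref{1.2}); the pressure term $\int_\Omega(v_r p_{,r}+v_z p_{,z})\,dx$ vanishes by the same argument.

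The main technical step is the viscous term. Using $\Delta f=\frac{1}{r}(rf_{,r})_{,r}+f_{,zz}$ for axisymmetric $f$, integration by parts against $v_r$ in $r$ produces the boundary contribution $[rv_{r,r}v_r]_0^R$, which vanishes at $r=R$ because $v_r|_{S_1}=0$ and at $r=0$ because of the Liu--Wang expansion (\ref{1.21}), and in $z$ the boundary term $[v_{r,z}v_r]_{-a}^{a}$ vanishes by $v_{r,z}|_{S_2}=0$ (from (\ref{1.6})). The analogous identity for $v_\varphi$ uses $v_\varphi|_{S_1}=0$, the expansion $v_\varphi=b_1r+O(r^2)$, and $v_{\varphi,z}|_{S_2}=0$. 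For $v_z$, the boundary term in $r$ vanishes because $v_{z,r}|_{S_1}=0$ from (\ref{1.6}), and in $z$ because $v_z|_{S_2}=0$. Hence the extra terms $\nu v_r/r^2$ and $\nu v_\varphi/r^2$ on the left-hand side of (\ref{1.7}) combine with the gradient contributions to give
$$
\tfrac{1}{2}\tfrac{d}{dt}|v|_{2,\Omega}^2+\nu|\nabla v|_{2,\Omega}^2+\nu\intop_\Omega\bigg({v_r^2\over r^2}+{v_\varphi^2\over r^2}\bigg)dx=\intop_\Omega f\cdot v\,dx,
$$
where $|\nabla v|^2$ denotes the sum of squares of cylindrical derivatives of the scalar components.

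The last step is standard. Estimating the right-hand side by $|f(t)|_{2,\Omega}|v(t)|_{2,\Omega}$, absorbing the $|v|_2$ factor into the maximum, and integrating in time yields $|v(t)|_{2,\Omega}\le|v(0)|_{2,\Omega}+|f|_{2,1,\Omega^t}$. Substituting back and applying $2ab\le a^2+b^2$ produces the constants $2$ and $3$ in (\ref{2.5}). The only delicate point in the whole argument is the handling of the boundary and axis contributions in the viscous integration by parts; once the validity of (\ref{1.21}) is invoked and each boundary condition from (\ref{1.2})--(\ref{1.6}) is matched against the corresponding boundary term, everything closes in a direct manner.
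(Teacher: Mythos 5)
Your proposal is correct and follows essentially the same route as the paper: test $(\ref{1.7})_{1,2,3}$ against $v_r$, $v_\varphi$, $v_z$, obtain the energy identity, deduce $|v(t)|_{2,\Omega}\le|f|_{2,1,\Omega^t}+|v(0)|_{2,\Omega}$, and substitute back with Young's inequality to get the constants $3$ and $2$. The only difference is that you spell out the boundary and axis contributions in the viscous integration by parts, which the paper passes over silently; this is a harmless (indeed welcome) addition.
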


\begin{proof}
  Multiplying $(\ref{1.7})_1$ by $v_r$, $(\ref{1.7})_2$ by  $v_\varphi$, $(\ref{1.7})_3$ by $v_z$, adding the results and integrating over $\Omega$ yield
  \begin{equation}\eqal{
      &{1\over 2}{d\over dt}\intop_\Omega(v_r^2+v_\varphi^2+v_z^2)dx+\nu\intop_\Omega(|\nabla v_r|^2+|\nabla v_\varphi|^2+|\nabla v_z|^2)dx\cr
      &\quad+\nu\intop_\Omega\bigg({v_r^2\over r^2}+{v_\varphi^2\over r^2}\bigg)dx+\intop_\Omega(p_{,r}v_r+p_{,z}v_z)dx\cr
      &=\intop_\Omega(f_rv_r+f_\varphi v_\varphi+f_zv_z)dx.\cr}
    \label{2.6}
  \end{equation}
  Since $v$ is divergence free the last term on the l.h.s. vanishes. From (\ref{2.6}) we have
  \begin{equation}
    {d\over dt}|v|_{2,\Omega}\le|f|_{2,\Omega},
    \label{2.7}
  \end{equation}
  where $f^2=f_r^2+f_\varphi^2+f_z^2$. Integrating (\ref{2.7}) with respect to time yields
  \begin{equation}
    |v(t)|_{2,\Omega}\le|f|_{2,1,\Omega^t}+|v(0)|_{2,\Omega}.
    \label{2.8}
  \end{equation}
  Integrating (\ref{2.6}) with respect to time and using (\ref{2.8}), we obtain
  \begin{equation}\eqal{
    &{1\over 2}|v(t)|_{2,\Omega}^2+\nu|\nabla v|_{2,\Omega^t}^2+\nu\intop_{\Omega^t}\bigg({v_r^2\over r^2}+{v_\varphi^2\over r^2}\bigg)dxdt'\cr
    &\le|f|_{2,1,\Omega^t}(|f|_{2,1,\Omega^t}+|v(0)|_{2,\Omega})+{1\over 2}|v(0)|_{2,\Omega}^2.\cr}
    \label{2.9}
  \end{equation}
  The above inequality implies (\ref{2.5}). This concludes the proof.
\end{proof}

As for the swirl $u=rv_\varphi$, we have the following.

\begin{lemma}[Maximum principle for the swirl]\label{l2.5}
  For any regular solution $v$ to (\ref{1.1})--(\ref{1.3}) we have
  \begin{equation}
    |u(t)|_{\infty\Omega}\le D_2=|f_0|_{\infty,1,\Omega^t}+|u(0)|_{\infty,\Omega}.
    \label{2.10}
  \end{equation}
\end{lemma}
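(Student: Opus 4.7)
The plan is to perform a standard $L^p$ parabolic energy estimate on the swirl equation (\ref{1.9}) and pass to the limit $p\to\infty$. First I would multiply $(\ref{1.9})_1$ by $|u|^{p-2}u$ and integrate over $\Omega$. The time derivative yields $\frac{1}{p}\frac{d}{dt}|u|_{p,\Omega}^p$, and on the right-hand side Hölder's inequality gives the bound $|f_0|_{p,\Omega}|u|_{p,\Omega}^{p-1}$. The remaining task is to verify that the other three terms on the left are nonnegative.

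For the convection term, I would rewrite $\int_\Omega (v\cdot\nabla u)|u|^{p-2}u\,dx = \frac{1}{p}\int_\Omega v\cdot\nabla(|u|^p)\,dx$ and integrate by parts; the volume contribution vanishes because $\divv v=0$, and the boundary integral vanishes because $v\cdot\bar n=0$ on $\partial\Omega$ (namely $v_r=0$ on $S_1$ and $v_z=0$ on $S_2$ by (\ref{1.2})). For the viscous term, integration by parts gives $\nu(p-1)\int_\Omega|u|^{p-2}|\nabla u|^2\,dx\ge 0$ plus a surface term $-\nu\int_S|u|^{p-2}u\,\partial_n u\,dS$ which vanishes since $u=0$ on $S_1^T$ and $u_{,z}=0$ on $S_2^T$ by $(\ref{1.9})_{2,3}$.

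The delicate piece is the singular term $2\nu\int_\Omega r^{-1}u_{,r}|u|^{p-2}u\,dx=\frac{2\nu}{p}\int_\Omega r^{-1}(|u|^p)_{,r}\,dx$. Using the cylindrical volume element $dx=r\,dr\,d\varphi\,dz$, the factor $r^{-1}$ cancels and the $r$-integral reduces by the fundamental theorem of calculus to the boundary contributions $|u|^p\big|_{r=0}^{r=R}$. The value at $r=R$ vanishes by $(\ref{1.9})_2$, and the value at $r=0$ vanishes because $u=rv_\varphi$ and $v_\varphi$ is bounded on the axis (the Liu--Wang expansion (\ref{1.21}) confirms $v_\varphi=O(r)$, so in fact $u=O(r^2)$). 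Hence this term contributes zero.

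Putting everything together yields $\frac{d}{dt}|u|_{p,\Omega}\le|f_0|_{p,\Omega}$; integrating in time gives $|u(t)|_{p,\Omega}\le|u(0)|_{p,\Omega}+|f_0|_{p,1,\Omega^t}$, and passing to $p\to\infty$ produces (\ref{2.10}). Alternatively, one could appeal directly to the parabolic maximum principle for the operator $\partial_t+v\cdot\nabla-\nu\Delta+\frac{2\nu}{r}\partial_r$, checking that at an interior extremum of $u$ the singular term vanishes because $u_{,r}=0$ there (for $r>0$), and that extrema on $S_1$ or on the axis give $u=0$ while on $S_2$ the Neumann condition $u_{,z}=0$ prevents improvement of the bound beyond $D_2$. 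The only nontrivial point in either approach is the axial behavior, which is handled by axisymmetric regularity.
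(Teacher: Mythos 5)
Your proposal is correct and follows essentially the same route as the paper's proof: an $L^p$ (there $L^s$) estimate on the swirl equation, with the singular term ${2\nu\over r}u_{,r}$ reduced via the cylindrical volume element to the boundary values of $|u|^p$ at $r=0$ and $r=R$, which vanish by $(\ref{1.21})_2$ and (\ref{1.2}), followed by integration in time and the limit $p\to\infty$. The only difference is that you spell out the vanishing of the convective and viscous boundary contributions, which the paper leaves implicit.
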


\begin{proof}
  Multipying the swirl equation(\ref{1.9}) by $u|u|^{s-2}$, $s>2$, integrating over $\Omega$ and by parts, we obtain
  $$
    {1\over s}{d\over dt}|u|_{s,\Omega}^s+{4\nu(s-1)\over s^2}|\nabla|u|^{s/2}|_{2,\Omega}^2+{\nu\over s}\intop_\Omega(|u|^s)_{,r}drdz=\intop_\Omega f_0u|u|^{s-2}.
  $$
  Noting that $u|_{r=0}=u|_{r=R}=0$ (by $(\ref{1.21})_2$ and (\ref{1.2})), we see that the last term on the left-hand side vanishes, and so
  $$
    {d\over dt}|u|_{s,\Omega}\le|f_0|_{s,\Omega}.
  $$
  Integrating in time and taking $s\to\infty$ gives (\ref{2.10}).
\end{proof}

\begin{lemma}[Energy estimates for $\psi$ and $\psi_1$]\label{l2.6}
  For every regular solution $v$ to (\ref{1.1})--(\ref{1.3}),
  \begin{equation}
    \|\psi\|_{1,\Omega}^2+|\psi_1|_{2,\Omega}^2\le cD_1^2,
    \label{2.11}
  \end{equation}
  \begin{equation}
    \|\psi_{,z}\|_{1,2,\Omega^t}^2+|\psi_{1,z}|_{2,\Omega^t}^2\le cD_1^2.
    \label{2.12}
  \end{equation}
\end{lemma}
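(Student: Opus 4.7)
The plan is to avoid testing the PDEs (1.15) and (1.18) directly, and instead exploit the algebraic identities (1.16) relating $v_r,v_z$ to $\psi$ to pull these bounds down from Lemma \ref{l2.4} for free. Concretely, squaring $v_r=-\psi_{,z}$ and $v_z=\psi_{,r}+\psi/r$ and writing the integral in cylindrical coordinates (axisymmetry reduces $dx$ to $2\pi r\,dr\,dz$ up to the $\varphi$-integration), I obtain
\begin{equation*}
\intop_{\Omega}(v_r^2+v_z^2)\,dx=\intop_{\Omega}\bigl(\psi_{,z}^2+\psi_{,r}^2+\psi^2/r^2\bigr)\,dx+2\intop_{\Omega}{\psi_{,r}\psi\over r}\,dx.
\end{equation*}
The cross term equals $4\pi\int\!\!\int\psi_{,r}\psi\,dr\,dz=2\pi\int[\psi^2]_{r=0}^{r=R}\,dz$, which vanishes because $\psi|_{S_1}=0$ forces $\psi(R,z)=0$ and the Liu-Wang expansion $(\ref{1.21})_3$ gives $\psi\sim d_1(z,t)r$ near the axis. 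Hence $|\nabla\psi|_{2,\Omega}^2+|\psi/r|_{2,\Omega}^2=|v'|_{2,\Omega}^2$ where $v'=(v_r,v_z)$, and Lemma \ref{l2.4} yields both $|\nabla\psi|_{2,\Omega}^2\le D_1^2$ and $|\psi_1|_{2,\Omega}^2=|\psi/r|_{2,\Omega}^2\le D_1^2$. Since $r\le R$ on $\Omega$, $|\psi|_{2,\Omega}^2\le R^2|\psi/r|_{2,\Omega}^2\le R^2D_1^2$, and (\ref{2.11}) follows.

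For (\ref{2.12}) I repeat the same identity after replacing $\psi$ by $\psi_{,z}$, which corresponds to $v_{r,z}=-\psi_{,zz}$ and $v_{z,z}=\psi_{,rz}+\psi_{,z}/r$ from (\ref{1.16}). Once again squaring and integrating,
\begin{equation*}
\intop_{\Omega}(v_{r,z}^2+v_{z,z}^2)\,dx=\intop_{\Omega}\bigl(|\nabla\psi_{,z}|^2+(\psi_{,z})^2/r^2\bigr)\,dx+2\intop_{\Omega}{\psi_{,rz}\psi_{,z}\over r}\,dx,
\end{equation*}
and the cross term reduces to $2\pi\int[(\psi_{,z})^2]_{r=0}^{r=R}dz$. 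The $r=R$ endpoint vanishes because $\psi\equiv0$ on $S_1$ for all $z$, hence $\psi_{,z}|_{r=R}=0$; the $r=0$ endpoint vanishes from the expansion $\psi_{,z}\sim d_{1,z}(z,t)r$. Therefore $|\nabla\psi_{,z}|_{2,\Omega}^2+|\psi_{1,z}|_{2,\Omega}^2\le|\nabla v'|_{2,\Omega}^2$, and time-integration combined with Lemma \ref{l2.4} gives $|\nabla\psi_{,z}|_{2,\Omega^t}^2+|\psi_{1,z}|_{2,\Omega^t}^2\le cD_1^2$ (with $c$ absorbing $1/\nu$). Finally $|\psi_{,z}|_{2,\Omega^t}^2\le R^2|\psi_{1,z}|_{2,\Omega^t}^2$ completes the $H^1_t$-in-space norm.

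The only nontrivial step is justifying the vanishing of the cross terms, which is precisely where the Liu-Wang expansions (\ref{1.21}) enter — they are needed to guarantee $\psi,\psi_{,z}\to0$ sufficiently fast at the axis to kill the boundary contribution at $r=0$. Apart from this, the proof is purely algebraic: no test-function argument against the stream-function equations is required, and no elliptic theory for the degenerate operators in (\ref{1.15})/(\ref{1.18}) is invoked.
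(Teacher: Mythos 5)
Your proof is correct, and it takes a genuinely different route from the paper's. The paper tests the elliptic problem (\ref{1.15}) with $\psi$ (and, for (\ref{2.12}), tests the $z$-differentiated equation with $\psi_{,z}$), converts $\intop_\Omega\omega_\varphi\psi\,dx$ back into velocity terms by integration by parts, and closes with Young's inequality; you instead bypass the stream-function equation altogether and use the pointwise algebraic identity
\begin{equation*}
v_r^2+v_z^2=|\nabla\psi|^2+{\psi^2\over r^2}+{1\over r}\,(\psi^2)_{,r},
\end{equation*}
whose last term integrates to a pure boundary contribution that dies at $r=R$ (by $\psi|_{S_1}=0$) and at $r=0$ (by the Liu--Wang expansion $(\ref{1.21})_3$); the same identity applied to $\psi_{,z}$ handles (\ref{2.12}). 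What your version buys is an exact equality $|\nabla\psi|_{2,\Omega}^2+|\psi_1|_{2,\Omega}^2=|v'|_{2,\Omega}^2$ with no absorption step, after which Lemma \ref{l2.4} finishes both claims; the price is that you must invoke the axis expansions (\ref{1.21}) to kill the $r=0$ endpoint, whereas the paper's proof of (\ref{2.11}) gets by with the boundary conditions $\psi|_S=0$ and $\omega_\varphi|_S=0$ alone (its integrations by parts are in $x$, not in $r$ at fixed measure $dr\,dz$, so no axis term appears). Since the lemma is stated for regular solutions and the paper freely uses (\ref{1.21}) in exactly this way elsewhere (e.g.\ in Lemmas \ref{l4.1} and \ref{l5.1}), this is an acceptable dependence. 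Both arguments ultimately rest on the same input, namely estimate (\ref{2.5}).
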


\begin{proof}
  Multiplying $(\ref{1.15})_1$ by $\psi$, and integrating over $\Omega$ we obtain
  $$\eqal{
      |\nabla\psi|_{2,\Omega}^2+|\psi_1|_{2,\Omega}^2&=\intop_\Omega\omega_\varphi\psi dx=\intop_\Omega(v_{r,z}-v_{z,r})\psi dx\cr
      &=\intop_\Omega(v_z\psi_{,r}-v_r\psi_{,z})dx+\intop_\Omega v_z\psi_1dx\cr
      &\le(|\psi_{,r}|_{2,\Omega}^2+|\psi_{,z}|_{2,\Omega}^2+|\psi_1|_{2,\Omega}^2)/2+c(|v_r|_{2,\Omega}^2 +|v_z|_{2,\Omega}^2),\cr}
  $$
  where we integrated by parts and used the boundary condition $\psi|_S=0$ (recall (\ref{1.15})) in the third equality. For (\ref{2.12}) we differentiate $(\ref{1.15})_1$ with respect to $z$, multiply by $\psi_{,z}$ and integrate over $\Omega^t$ to obtain
  $$\eqal{
      &\intop_{\Omega^t}|\nabla\psi_{,z}|^2dxdt'+\intop_{\Omega^t}|\psi_{1,z}|^2dxdt'= \intop_{\Omega^t}\omega_{\varphi,z}\psi_{,z}dxdt'\cr
      &=-\intop_{\Omega^t}\omega_\varphi\psi_{,zz}dxdt'\le|\psi_{,zz}|_{2,\Omega^t}^2/2+c|\omega_\varphi|_{2,\Omega^t}^2,\cr}
  $$
  as required, where we used boundary conditions $\omega_\varphi|_S=0$ (recall (\ref{1.2})) in the second equality.
\end{proof}

\section{Conditional energy estimates for $\Phi$ and $\Gamma$}\label{s3}

Let $c_0$, $A$ and $s>3$ be given positive constants. As introduced in Section \ref{s1}, we analyze the energy estimates by taking into account the critical wedge
\[
W_{A,c_0} = \left\{
t\in(0,T): L_s(t)>A,\quad \frac{L_s(t)}{M(t)}<c_0
\right\}.
\]
Outside $W_{A,c_0}$, the original estimate of the nonlinear interaction term can be closed. The possible non-closable part of the interaction inside $W_{A,c_0}$ is collected in the critical-wedge residual $E_{W,s}$. Thus the conclusion of this section is a conditional estimate rather than an unconditional global regularity bound.

\begin{lemma}\label{l3.1}
  Let $i(t) = \int_\Omega \frac{v_\varphi(t)}{r}\Phi(t)\Gamma(t)\,dx$ and define the auxiliary integral
  \[
    \mathcal{I}(t) := \int_0^t |i(\tau)|\,d\tau.
  \]
  Assume that $\mathcal{I}(t)$ is bounded for any $t\in(0,T)$. If $v$ is a regular solution to (\ref{1.1})--(\ref{1.3}) then,
  \begin{equation}
    \|\Phi\|_{V(\Omega^t)}^2+\|\Gamma\|_{V(\Omega^t)}^2\le\varphi(\mathrm{data})(1+ |v_\varphi|_{\infty,\Omega^t}^{2\delta})(\mathcal{I}(t)+D_3^2),
    \label{3.1}
  \end{equation}
  where, for a scalar function $w$,
  \[
    \|w\|_{V(\Omega^t)}
    \colon=
    \|w\|_{L^\infty(0,t;L^2(\Omega))}
    +
    \|\nabla w\|_{L^2(\Omega^t)}.
  \]
\end{lemma}

\begin{proof}
  We multiply (\ref{1.11}) by $\Phi$ and integrate over $\Omega$ to obtain
  \begin{equation}\eqal{
      &{1\over 2}{d\over dt}|\Phi|_{2,\Omega}^2+\nu|\nabla\Phi|_{2,\Omega}^2-\nu\intop_{-a}^a\Phi^2\bigg|_{r=0}^{r=R}dz\cr
      &=\intop_\Omega(\omega_r\partial_r+\omega_z\partial_z){v_r\over r}\Phi dx+\intop_\Omega\bar F_r\Phi dx,\cr}
    \label{3.2}
  \end{equation}
  where the last term on the l.h.s. equals $\intop_{-a}^a\Phi^2|_{r=0}dz$, due to (\ref{1.13}). Since it is positive it can be dropped. Recalling (\ref{1.5}) that $\omega_r=-v_{\varphi,z}$, $\omega_z=(rv_\varphi)_{,r}/r$, we can integrate in the first term on the r.h.s. by parts
  $$\eqal{
    &\intop_\Omega(\omega_r\partial_r+\omega_z\partial_z){v_r\over r}\Phi dx=\intop_\Omega\bigg(-v_{\varphi,z}\bigg({v_r\over r}\bigg)_{,r}+{(rv_\varphi)_{,r}\over r}\bigg({v_r\over r}\bigg)_{,z}\bigg)\Phi rdrdz\cr
    &=\intop_\Omega v_\varphi\bigg(\bigg({v_r\over r}\bigg)_{,rz}\Phi+\bigg({v_r\over r}\bigg)_{,r}\Phi_{,z}\bigg)dx-\intop_\Omega v_\varphi\bigg(\bigg({v_r\over r}\bigg)_{,rz}\Phi+\bigg({v_r\over r}\bigg)_{,z}\Phi_{,z}\bigg)dx\cr
    &=-\intop_\Omega v_\varphi(\psi_{1,zr}\Phi_{,z}-\psi_{1,zz}\Phi_{,r})dx\equiv J_\Phi,\cr}
  $$
  where, in the second line, the boundary term on $S_2$ vanishes because\break $v_{\varphi,z}/r|_{S_2}=\Phi|_{S_2}=0$ (recall (\ref{1.2}) and (\ref{1.5}) and the second boundary term equals
  $$
    \intop_{-a}^a|rv_\varphi\bigg({v_r\over r}\bigg)_{,z}\Phi\bigg|_{r=0}^{r=R}dz=0
  $$
  (Since $v_\varphi|_{r=R}=0$ by (\ref{1.2}) and $rv_\varphi({v_r\over r})_{,z}\Phi|_{r=0}=0$ by (\ref{1.21}).) In the third line (\ref{1.19}) is used. Continuing,
  $$\eqal{
    |J_\Phi|&\le\intop_\Omega\bigg|rv_\varphi{\psi_{1,rz}\over r}\Phi_{,z}\bigg|dx+\intop_\Omega\bigg|r^{1-\delta}v_\varphi{\psi_{1,zz}\over r^{1-\delta}}\Phi_{,r}\bigg|dx\cr
    &\le|rv_\varphi|_{\infty,\Omega}\bigg|{\psi_{1,rz}\over r}\bigg|_{2,\Omega}|\Phi_{,z}|_{2,\Omega}+|r^{1-\delta}v_\varphi|_{\infty,\Omega}\bigg|{\psi_{1,zz}\over r^{1-\delta}}\bigg|_{2,\Omega}|\Phi_{,r}|_{2,\Omega}\cr
    &\le D_2|\nabla\Phi|_{2,\Omega}\bigg(|\Gamma_{,z}|_{2,\Omega}+{|v_\varphi|_{\infty,\Omega}^\delta\over\delta D_2^\delta}|\psi_{1,zzr}r^\delta|_{2,\Omega}\bigg)\cr
    &\le D_2|\nabla\Phi|_{2,\Omega}|\nabla\Gamma|_{2,\Omega}\bigg(1+{|v_\varphi|_{\infty,\Omega}^\delta R^\delta\over\delta D_2^\delta}\bigg),\cr}
  $$
  where in the second line we used maximum principle (see Lemma \ref{l2.5}), the Hardy inequality (see Lemma \ref{l2.1}) and the elliptic estimates for the modified stream function (see Lemma \ref{l4.2}).

  Using the above estimate in (\ref{3.2}) yields
  \begin{equation}
    {1\over 2}{d\over dt}|\Phi|_{2,\Omega}^2+{\nu\over 2}|\nabla\Phi|_{2,\Omega}^2\le{D_2^2\over 2\nu}|\nabla\Gamma|_{2,\Omega}^2\bigg(1+ {|v_\varphi|_{\infty,\Omega}^{2\delta}R^{2\delta}\over\delta^2D_2^{2\delta}}\bigg)+{1\over\nu}|\bar F_r|_{6/5,\Omega}^2.
    \label{3.3}
  \end{equation}
  Multiplying (\ref{1.12}) by $\Gamma$, integrating over $\Omega$ and using the boundary conditions we have
  \begin{equation}\eqal{
      &{1\over 2}{d\over dt}|\Gamma|_{2,\Omega}^2+\nu|\nabla\Gamma|_{2,\Omega}^2-\nu\intop_{-a}^a\Gamma^2\bigg|_{r=0}^{r=R}dz\cr
      &=-2\intop_\Omega{v_\varphi\over r}\Phi\Gamma dx+\intop_\Omega\bar F_\varphi\Gamma dx,\cr}
    \label{3.4}
  \end{equation}
  since $\Gamma|_{r=R}=0$ the last term on the l.h.s. of (\ref{3.4}) equals $\nu\intop_{-a}^a\Gamma^2|_{r=0}dz$.

  Applying the H\"older and Young inequalities to the last term, and then multiplying the resulting equation by $cD_2^2(1+|v_\varphi|_{\infty,\Omega}^{2\delta}R^{2\delta}/(\delta^2D_2^{2\delta}))$ and\break adding to (\ref{3.3}) gives
  \begin{equation}\eqal{
    &D_2^2{d\over dt}|\Gamma|_{2,\Omega}^2+\nu D_2^2|\nabla\Gamma|_{2,\Omega}^2+{d\over dt}|\Phi|_{2,\Omega}^2+\nu|\nabla\Phi|_{2,\Omega}^2\cr
    &\le cD_2^2\bigg(1+{|v_\varphi|_{\infty,\Omega}^{2\delta}R^{2\delta}\over\delta^2D_2^{2\delta}}\bigg) \bigg(\intop_\Omega{v_\varphi\over r}\Phi\Gamma dx+{1\over 2\nu}(|\bar F_r|_{6/5,\Omega}^2+|\bar F_\varphi|_{6/5,\Omega}^2)\bigg).\cr}
    \label{3.5}
  \end{equation}
  Integrating (\ref{3.5}) with respect to time, bounding the interaction integral by $\mathcal{I}(t)$, and absorbing the constants into a generic function $\varphi(\mathrm{data})$ yields (\ref{3.1}).
\end{proof}

As explained in the Introduction, the possibility of establishing an unconditional bound for the interaction term $I$ depends on the relation between the $L^s$ norm of $v_\varphi$ and its $L^\infty$ norm. Let $c_0>0$ and $A>0$ be fixed thresholds. If the trajectory enters the critical wedge
\begin{equation}
  W_{A,c_0} =
  \left\{
  t\in(0,T): L_s(t)>A,\ \frac{L_s(t)}{M(t)}<c_0
  \right\},
\end{equation}
where $L_s(t)=\|v_\varphi(t)\|_{L^s(\Omega)}$ and $M(t)=\|v_\varphi(t)\|_{L^\infty(\Omega)}$, the standard closure argument breaks down. Outside this critical wedge, either $L_s(t)\leq A$ or $\frac{L_s(t)}{M(t)}\geq c_0$, and the $L^s$ norm is bounded by a safe data-dependent threshold:
\begin{equation}
  L_s(t)\le\max\left\{A, \frac{c}{c_0^{s-1}}(D_1^2+\|f_\varphi\|_{L^1(0,t;L^s(\Omega))})+\|v_\varphi(0)\|_{L^s(\Omega)}\right\}\equiv D_{s,A,c_0}.
  \label{3.12}
\end{equation}

Inside the critical wedge, the portion of the $L^s$ norm exceeding this safe threshold leads to a residual. Let
\[
  \alpha_0=\frac{(s-3)(1-d)}{2s},\qquad 0<d<1.
\]
The non-closable excess of the nonlinear interaction is measured by the residual
\begin{equation}
  R_s^I(t) = C_s D_2^d \left[L_s(t)^{1-d}-D_{s,A,c_0}^{1-d}\right]_+ |\Phi|_{2,\Omega}^{\alpha_0}|\Gamma|_{2,\Omega}^{\alpha_0} |\nabla\Phi|_{2,\Omega}^{1-\alpha_0}|\nabla\Gamma|_{2,\Omega}^{1-\alpha_0}.
\end{equation}
The total critical-wedge residual is then defined as
\begin{equation}
  E_{W,s}(t) = \chi_{W_{A,c_0}}(t) R_s^I(t).
\end{equation}
Thus $E_{W,s}$ vanishes outside the critical wedge and measures only the non-closable excess of the nonlinear interaction. Therefore the conclusion of the energy argument is not the unconditional estimate $X_s(t)\leq \phi(\mathrm{data})$, but rather
\begin{equation}
X_s(t)
\leq
\Psi_{s,A,c_0}\left(
\mathrm{data},
\int_0^t E_{W,s}(\tau)\,d\tau
\right).
\end{equation}
In particular, if $E_{W,s}\equiv 0$, and especially if the trajectory does not enter $W_{A,c_0}$, the original data-dependent estimate is recovered.

\begin{lemma}\label{l3.3}
  Let $\sigma>3$ and $0<d<1$. For a.e. $t\in(0,T)$ one has
  \begin{equation}
    |i(t)|\le C_\sigma D_2^d L_\sigma(t)^{1-d}|\Phi|_{2,\Omega}^{\alpha_0}|\Gamma|_{2,\Omega}^{\alpha_0} |\nabla\Phi|_{2,\Omega}^{1-\alpha_0}|\nabla\Gamma|_{2,\Omega}^{1-\alpha_0},
    \label{3.13}
  \end{equation}
  where
  \[
    i(t)=\int_\Omega \frac{v_\varphi(t)}{r}\Phi(t)\Gamma(t)\,dx, \qquad L_\sigma(t)=|v_\varphi(t)|_{\sigma,\Omega},
  \]
  and $\alpha_0={(\sigma-3)(1-d)\over 2\sigma}$.
  Consequently, if $\sup_{0<\tau<t}L_\sigma(\tau)\le D_0$, then
  \begin{equation}
    \mathcal{I}(t)\le C_\sigma D_2^dD_0^{1-d}|\Phi|_{2,\Omega^t}^{\alpha_0}|\Gamma|_{2,\Omega^t}^{\alpha_0} |\nabla\Phi|_{2,\Omega^t}^{1-\alpha_0}|\nabla\Gamma|_{2,\Omega^t}^{1-\alpha_0},
    \label{3.14}
  \end{equation}
  where $\mathcal{I}(t)=\int_0^t |i(\tau)|\,d\tau$.
\end{lemma}

\begin{proof}
  We can write $i(t)$ in the form
  $$
    i(t)=\intop_{\Omega}v_\varphi r^dr^{-{1+d\over 2}}\Phi r^{-{1+d\over 2}}\Gamma dx.
  $$
  By the H\"older inequality
  $$\eqal{
    |i(t)|&\le|v_\varphi r^d|_{{\sigma\over 1-d},\Omega}|r^{-{1+d\over 2}}\Phi|_{{2\sigma\over\sigma-1+d},\Omega}|r^{-{1+d\over 2}}\Gamma|_{{2\sigma\over\sigma-1+d},\Omega}\cr
    &\le D_2^d L_\sigma(t)^{1-d}|r^{-{1+d\over 2}}\Phi|_{{2\sigma\over\sigma-1+d},\Omega}|r^{-{1+d\over 2}}\Gamma|_{{2\sigma\over\sigma-1+d},\Omega}.\cr}
  $$
  We consider the case $0<d<1$. To apply Lemma \ref{l2.3} we assume
  $$
    s={1+d\over 2}{2\sigma\over\sigma-1+d}={\sigma(1+d)\over\sigma-1+d},\quad q={2\sigma\over\sigma-1+d}.
  $$
  Now, we check the assumptions of Lemma \ref{l2.3}.

  The condition $s<2$ implies ${\sigma(1+d)\over\sigma-1+d}<2$ so $(\sigma-2)(d-1)<0$. The last condition holds for $\sigma>2$, $d<1$. Next $q\le 6-2s$ implies $(\sigma-3)(1-d)\ge 0$. This holds for $\sigma\ge 3$.

  Applying Lemma \ref{l2.3} we have
  $$
    |r^{-{1+d\over 2}}\Phi|_{{2\sigma\over\sigma-1+d},\Omega}\le c|\Phi|_{2,\Omega}^{{3-s\over 2q}-{1\over 2}}|\nabla\Phi|_{2,\Omega}^{{3\over 2}-{3-s\over 2q}}\equiv J.
  $$
  We calculate
  $$\eqal{
    {3-s\over q}&={3-{\sigma(1+d)\over\sigma-1+d}\over{2\sigma\over\sigma-1+d}}={3(\sigma-1+d)-\sigma(1+d)\over 2\sigma}={1\over 2}+{(\sigma-3)(1-d)\over 2\sigma}\cr
    &\equiv{1\over 2}+\alpha_0\equiv\alpha.\cr}
  $$
  Hence $\alpha-{1\over 2}=\alpha_0={(\sigma-3)(1-d)\over 2\sigma}$, ${3\over 2}-\alpha=1-{(\sigma-3)(1-d)\over 2\sigma}$. Since $\alpha-1/2>0$, ${3\over 2}-\alpha<1$ we have that $\sigma>3$.

  Thus
  $$
    J\le c|\Phi|_{2,\Omega}^{\alpha_0}|\nabla\Phi|_{2,\Omega}^{1-\alpha_0}.
  $$
  Using the above estimates in $|i(t)|$ yields (\ref{3.13}) with some generic constant $C_\sigma$. Integrating (\ref{3.13}) over time, we have
  $$
    \mathcal{I}(t)\le C_\sigma D_2^d\intop_0^t L_\sigma(\tau)^{1-d}|\Phi|_{2,\Omega}^{\alpha_0}|\Gamma|_{2,\Omega}^{\alpha_0} |\nabla\Phi|_{2,\Omega}^{1-\alpha_0}|\nabla\Gamma|_{2,\Omega}^{1-\alpha_0}d\tau\equiv I_2.
  $$
  Continuing,
  $$
    I_2\le C_\sigma D_2^d\sup_{0<\tau<t} L_\sigma(\tau)^{1-d}\intop_0^t|\Phi|_{2,\Omega}^{\alpha_0}|\Gamma|_{2,\Omega}^{\alpha_0} |\nabla\Phi|_{2,\Omega}^{1-\alpha_0}|\nabla\Gamma|_{2,\Omega}^{1-\alpha_0}d\tau\equiv I_3.
  $$
  Applying the H\"older inequality in the time integral in $I_3$ we estimate it by
  $$\eqal{
      &\bigg(\intop_0^t|\Phi|_{2,\Omega}^{\alpha_0\lambda_1}dt'\bigg)^{1/\lambda_1} \bigg(\intop_0^t|\Gamma|_{2,\Omega}^{\alpha_0\lambda_2}dt'\bigg)^{1/\lambda_2} \bigg(\intop_0^t|\nabla\Phi|_{2,\Omega}^{(1-\alpha_0)\lambda_3}dt'\bigg)^{1/\lambda_3}\cdot\cr
      &\quad\cdot\bigg(\intop_0^t|\nabla\Gamma|_{2,\Omega}^{(1-\alpha_0)\lambda_4}dt'\bigg)^{1/\lambda_4}\equiv L,\cr}
  $$
  where
  $$
    {1\over\lambda_1}+{1\over\lambda_2}+{1\over\lambda_3}+{1\over\lambda_4}=1.
  $$
  Assuming that $\alpha_0\lambda_1=2$, $\alpha_0\lambda_2=2$, $(1-\alpha_0)\lambda_3=2$, $(1-\alpha_0)\lambda_4=2$ we derive the restriction
  $$
    \alpha_0+1-\alpha_0=1.
  $$
  Hence
  $$
    L\le|\Phi|_{2,\Omega^t}^{\alpha_0}|\Gamma|_{2,\Omega^t}^{\alpha_0}|\nabla\Phi|_{2,\Omega^t}^{1-\alpha_0} |\nabla\Gamma|_{2,\Omega^t}^{1-\alpha_0}.
  $$
  Using the estimate in $I_3$ yields (\ref{3.14}).
\end{proof}

Introduce the notation
\begin{equation}
  X_s^2(t)=\|\Phi\|_{V(\Omega^t)}^2+\|\Gamma\|_{V(\Omega^t)}^2.
  \label{3.15}
\end{equation}

\begin{theorem}\label{t3.4}
  Assume that all quantities from Notation \ref{n1.1} are finite. Then
  \begin{equation}\eqal{
    X_s(t)\le\Psi_{s,A,c_0}\bigg(&\mathrm{data}, \cr
    &\int_0^t E_{W,s}(\tau)\,d\tau\bigg),\cr}
    \label{3.16}
  \end{equation}
  where data contain the quantities from Notation \ref{n1.1}.
\end{theorem}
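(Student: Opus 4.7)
The plan is to combine the three blocks that have already been assembled in this section with the order reduction inequality \eqref{1.42} and the pointwise bound \eqref{1.47} on $v_\varphi$, and then close the resulting polynomial inequality in $X$ by a Young type absorption. Concretely, I would first invoke Lemma \ref{l3.1} to get
\[
X^2(t)\le\frac{c}{D_*^2}D_2^2\Bigl(1+\frac{|v_\varphi|_{\infty,\Omega^t}^{2\delta}R^{2\delta}}{\delta^2D_2^{2\delta}}\Bigr)(|I|+D_3^2),
\]
which reduces the task to controlling $|I|$. The role of Remark \ref{r3.2} is to guarantee, regardless of whether $\lambda(s)\ge c_0$ or $\lambda(s)\le c_0$, the uniform bound \eqref{3.12}, i.e.\ $\sup_t|v_\varphi|_{\sigma,\Omega}\le D_0$ for some $\sigma>3$ with $D_0$ depending only on the data, which is exactly the hypothesis of Lemma \ref{l3.3}.

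Next I would substitute Lemma \ref{l3.3} into the energy inequality to get
\[
X^2\le\phi(\text{data})\bigl(1+\|v_\varphi\|_{L_\infty(\Omega^t)}^{2\delta}\bigr)\bigl(D_2^dD_0^{1-d}\|\Phi\|_{L_2(\Omega^t)}^{\alpha_0}X^{2-\alpha_0}+D_3^2\bigr),
\]
with $\alpha_0=(\sigma-3)(1-d)/(3\sigma)>0$. At this stage both $\|\Phi\|_{L_2(\Omega^t)}$ and $\|v_\varphi\|_{L_\infty(\Omega^t)}$ must be traded for powers of $X$ itself. For the first factor I would invoke the order reduction inequality of Lemma \ref{l6.1}, in the form \eqref{1.42}, which yields $\|\Phi\|_{L_2(\Omega^t)}^2\le\phi(\text{data})(1+\|v_\varphi\|_{L_\infty(\Omega^t)}^{2\delta})(X+1)$; for the second I would use \eqref{1.47} from Lemma \ref{l6.2}, i.e.\ $\|v_\varphi\|_{L_\infty(\Omega^t)}\le\phi(\text{data})(X^{3/4}+1)$.

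Substituting both into the inequality above gives the closed polynomial bound \eqref{1.48},
\[
X^2\le\phi(\text{data})\bigl(1+X^{3\delta/2}\bigr)\Bigl[\bigl(1+X^{3\delta\alpha_0 d/4}\bigr)X^{2-\alpha_0/2}+1\Bigr].
\]
The decisive observation is that $\alpha_0>0$ is fixed by the data (through $\sigma$ and $d$), whereas $\delta>0$ can be chosen freely as long as it is small enough; by choosing $\delta$ so small that $\tfrac{3}{2}\delta+\tfrac{3}{4}\delta\alpha_0 d+(2-\tfrac{\alpha_0}{2})<2$, Young's inequality (with exponent $2$ on the left) absorbs the highest power of $X$ on the right into a constant, leaving $X\le\phi(\text{data})$, which is \eqref{3.16}.

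The main obstacle I expect is not the Young absorption itself but the bookkeeping of the constants and of the condition ``$\delta$ small.'' One must verify that every appearance of $\|v_\varphi\|_{L_\infty(\Omega^t)}^{2\delta}$ in Lemma \ref{l3.1} and in \eqref{1.42} can be handled with the \emph{same} $\delta$, that the Liu--Wang expansion based ingredients feeding Lemmas \ref{l6.1} and \ref{l6.2} remain valid on $\Omega^t$ for arbitrary $t$, and that $d\in(0,1)$ and $\sigma>3$ can be chosen so that $\alpha_0$ is strictly positive and bounded below independently of $t$. Once this is done the Young step is routine, since $\alpha_0$ provides a fixed gap and $\delta$ can be made as small as needed.
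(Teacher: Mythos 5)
Your proposal follows the paper's own proof of Theorem \ref{t3.4} essentially step for step: Lemma \ref{l3.1} for the energy inequality, Remark \ref{r3.2} to secure the hypothesis (\ref{3.12}) of Lemma \ref{l3.3}, then the order reduction estimate of Lemma \ref{l6.1} and the bound (\ref{6.17}) of Lemma \ref{l6.2} to convert $|\Phi|_{2,\Omega^t}$ and $|v_\varphi|_{\infty,\Omega^t}$ into powers of $X$, closed by Young's inequality using that the total exponent $2-\alpha_0/2+O(\delta)$ stays below $2$ for $\delta$ small. The approach and the bookkeeping concerns you flag are exactly those of the paper; the argument is correct as the paper presents it.
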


\begin{proof}
  Outside the critical wedge $W_{A,c_0}$ we use the bound $L_s(t)\leq D_{s,A,c_0}$, and Lemma \ref{l3.3} gives the closable part of the interaction. Inside $W_{A,c_0}$, the excess over this closable bound is collected in the residual $R_s^I(t)$, and hence in the critical-wedge residual $E_{W,s}(t)$. Therefore the interaction estimate has the form
  \begin{equation}
    |i(t)|\le D_2^d D_{s,A,c_0}^{1-d}|\Phi|_{2,\Omega}^{\alpha_0}|\Gamma|_{2,\Omega}^{\alpha_0} |\nabla\Phi|_{2,\Omega}^{1-\alpha_0}|\nabla\Gamma|_{2,\Omega}^{1-\alpha_0} + R_s^I(t).
    \label{3.17}
  \end{equation}
  Using (\ref{3.17}) in (\ref{3.1}) and integrating over time, we have
  \begin{equation}\eqal{
    X_s^2(t)&\le\varphi_{s,A,c_0}(\mathrm{data})(1+|v_\varphi|_{\infty,\Omega^t}^{2\delta}) \bigg(|\Phi|_{2,\Omega^t}^{\alpha_0}X_s(t)^{2-\alpha_0}+D_3^2\bigg)\cr
    &\quad + C\int_0^t E_{W,s}(\tau)\,d\tau.\cr}
    \label{3.18}
  \end{equation}
  Lemma \ref{l6.1} gives the order reduction estimate
  \begin{equation}\eqal{
    |\Phi|_{2,\Omega^t}^2&\le\varphi_{s,A,c_0}(\mathrm{data})(1+|v_\varphi|_{\infty,\Omega^t}^{2\delta})X_s(t)\cr
    &\quad +\varphi_{s,A,c_0}(\mathrm{data}).\cr}
    \label{3.19}
  \end{equation}
  Finally, Lemma \ref{l6.2} implies
  \begin{equation}
    |v_\varphi|_{\infty,\Omega^t}\le{D_2\over\sqrt{\nu}}D_1^{1/4}X_s(t)^{3/4}+D_7.
    \label{3.20}
  \end{equation}
  Substituting (\ref{3.19}) and (\ref{3.20}) into (\ref{3.18}), and repeating the order-reduction argument, we obtain
  \begin{equation}\eqal{
    X_s^2(t)&\le\varphi_{s,A,c_0}(\mathrm{data})\bigg[(1+X_s(t)^{{3\over 2}\delta})(1+X_s(t)^{{3\over 2}\delta})^{\alpha_0}X_s(t)^{\alpha_0/2} X_s(t)^{2-\alpha_0}\cr
    &\quad +\varphi_{s,A,c_0}(\mathrm{data})X_s(t)^{2-\alpha_0}+D_3^2\bigg] + C\int_0^t E_{W,s}(\tau)\,d\tau.\cr}
    \label{3.21}
  \end{equation}
  Since $\delta$ is arbitrarily small we can choose it such that
  $$
    {3\over 2}\delta+{3\over 2}\delta\alpha_0+2-\alpha_0/2<2.
  $$
  By Young's inequality, we deduce
  \[
    X_s(t)\le\Psi_{s,A,c_0}\bigg(\mathrm{data}, \int_0^t E_{W,s}(\tau)\,d\tau\bigg).
  \]
  This proves the theorem.
\end{proof}

\section{Elliptic estimates for the modified stream function $\psi_1$}\label{s4}

We recall that the modified stream function $\psi_1$ is a solution to the problem (\ref{1.18}),
$$
  -\Delta\psi_1-{2\over r}\psi_{1,r}=\Gamma,\quad \psi_1|_S=0.
$$
In this section we prove $H^2$ and $H^3$ elliptic estimates for $\psi_1$, in cylindrical coordinates.

\begin{lemma}[$H^2$ elliptic estimate on $\psi_1$, see Lemma 3.1 in \cite{Z1}]\label{l4.1}
  If $\psi_1$ is a sufficiently regular solution to (\ref{1.18}) then
  \begin{equation}\eqal{
      &\intop_\Omega\bigg(\psi_{1,rr}^2+\psi_{1,rz}^2+\psi_{1,zz}^2+{\psi_{1,r}^2\over r^2}\bigg)dx\cr
      &\quad+\intop_{-a}^a(\psi_{1,z}^2|_{r=0}+\psi_{1,r}^2|_{r=R})dx\le c|\Gamma|_{2,\Omega}^2.\cr}
    \label{4.1}
  \end{equation}
\end{lemma}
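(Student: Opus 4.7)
The plan is to exploit equation (\ref{1.18}) directly by squaring both sides and integrating over $\Omega$. Writing $-\Delta\psi_1-(2/r)\psi_{1,r}=\Gamma$, this yields at once
\begin{equation*}
\intop_\Omega(\Delta\psi_1)^2 dx+\intop_\Omega\tfrac{4}{r}\psi_{1,r}\Delta\psi_1 dx+\intop_\Omega\tfrac{4}{r^2}\psi_{1,r}^2 dx=\intop_\Omega\Gamma^2 dx,
\end{equation*}
so the whole lemma reduces to a bookkeeping of integrations by parts that turns the first two terms into the positive combination displayed in (\ref{4.1}), with the third, $4|r^{-1}\psi_{1,r}|_{2,\Omega}^2$, being already of the right form and only helping.

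First I would integrate by parts the cross term $\intop_\Omega(4/r)\psi_{1,r}\Delta\psi_1 dx$, writing it as a boundary flux minus $\intop_\Omega\nabla\psi_1\cdot\nabla((4/r)\psi_{1,r}) dx$. On $S_2$ the condition $\psi_1|_{S_2}=0$ forces $\psi_{1,r}|_{S_2}=0$, killing that piece of the flux; on $S_1$ the normal derivative $\psi_{1,r}$ survives and produces a positive boundary integral of $\psi_{1,r}^2|_{r=R}$. After expanding $\nabla\psi_1\cdot\nabla((4/r)\psi_{1,r})$ in cylindrical coordinates one gets two perfect-derivative pieces $2\partial_r(\psi_{1,r}^2)$ and $2\partial_r(\psi_{1,z}^2)$ together with $-4\psi_{1,r}^2/r$. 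The Liu--Wang expansion (\ref{1.21}) yields $\psi_{1,r}\sim 2d_2(z,t)r$, so $\psi_{1,r}|_{r=0}=0$ and the first perfect derivative contributes only at $r=R$, while $\psi_1|_{r=R}=0$ gives $\psi_{1,z}|_{r=R}=0$ and leaves from the second only $+\intop_{-a}^a\psi_{1,z}^2|_{r=0}dz$.

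Second, to convert $\intop(\Delta\psi_1)^2 dx$ into $\intop|\nabla^2\psi_1|^2 dx$ plus boundary, I would expand $\Delta\psi_1=\psi_{1,rr}+\psi_{1,r}/r+\psi_{1,zz}$ inside the square, then integrate the cross terms $2r\psi_{1,rr}\psi_{1,zz}+2\psi_{1,r}\psi_{1,zz}$ by parts once in $z$ and once in $r$. The $z$-integrations are free because $\psi_{1,r}=\psi_{1,rr}=0$ on $S_2$, and the $r$-integrations are controlled by $\psi_1|_{r=R}=0$ (which forces tangential $\psi_{1,z}|_{r=R}=0$) together with Liu--Wang vanishing at $r=0$. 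The cancellations are arranged so that the mixed derivative $\psi_{1,rz}^2$ appears with the correct coefficient matching the cylindrical expression $|\nabla^2\psi_1|^2=\psi_{1,rr}^2+\psi_{1,zz}^2+2\psi_{1,rz}^2+\psi_{1,r}^2/r^2$, and the only surviving surface term is $2\pi\intop_{-a}^a\psi_{1,r}^2|_{r=R}dz$. Combining with the previous step, every quantity on the left of (\ref{4.1}) acquires a strictly positive coefficient and the inequality follows.

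The main obstacle I expect is not the algebra of the identity but the justification of the successive integrations by parts in a domain whose boundary is only Lipschitz, with edges where $S_1$ meets $S_2$, and with the weight $1/r$ singular at the axis. Both difficulties are resolved in the same way, by working with sufficiently regular $\psi_1$ (as guaranteed by the regular-solutions hypothesis) and by appealing to the Liu--Wang expansion (\ref{1.21}): without the behaviour $\psi_{1,r}=O(r)$, $\psi_{1,z}=\partial_z d_1+O(r^2)$ near $r=0$, the otherwise-formal boundary terms at the axis would be ill-defined, and no $H^2$-coercivity could be extracted from $\Gamma\in L^2$ alone.
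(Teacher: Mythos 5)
Your argument is correct, and I have checked that the bookkeeping closes: writing $\Gamma^2=(\Delta\psi_1)^2+\tfrac{4}{r}\psi_{1,r}\Delta\psi_1+\tfrac{4}{r^2}\psi_{1,r}^2$ and integrating, the first term yields $\intop_\Omega(\psi_{1,rr}^2+2\psi_{1,rz}^2+\psi_{1,zz}^2+\psi_{1,r}^2/r^2)dx+\intop_{-a}^a\psi_{1,r}^2|_{r=R}dz$ (the two $\pm\intop\psi_{1,z}^2|_{r=0}dz$ contributions from the cross terms cancel), the second yields $4\intop_\Omega\psi_{1,r}^2/r^2\,dx+2\intop_{-a}^a\psi_{1,r}^2|_{r=R}dz+2\intop_{-a}^a\psi_{1,z}^2|_{r=0}dz$, and the third is already positive; every term in (\ref{4.1}) then appears on the left with coefficient at least $1$, so the lemma even holds with $c=1$. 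This is, however, a genuinely different route from the paper's. The paper never squares the equation: it tests (\ref{1.18}) first against $\psi_{1,zz}$ to extract $\intop(\psi_{1,rz}^2+\psi_{1,zz}^2)dx+\intop\psi_{1,z}^2|_{r=0}dz$, then against $\psi_{1,r}/r$ to extract $\intop\psi_{1,r}^2/r^2\,dx+\intop\psi_{1,r}^2|_{r=R}dz$, and only at the very end recovers $\psi_{1,rr}$ algebraically from the equation itself. Both routes consume exactly the same boundary information (Liu--Wang behaviour $\psi_{1,r},\psi_{1,rz}=O(r)$ at the axis; $\psi_{1,z}|_{r=R}=0$ and $\psi_{1,r}|_{S_2}=0$ from $\psi_1|_S=0$; and (\ref{1.20})). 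What yours buys is a single sign-definite identity with an explicit constant and a transparent coercive structure; what the paper's buys is modularity -- the same two test functions, differentiated once in $z$, are reused almost verbatim to prove the $H^3$ estimates of Lemma \ref{l4.2}, whereas squaring the $z$-differentiated equation would force you to confront all cross terms of the third-order Hessian at once. Your closing remark about the necessity of the axis expansions is well taken and matches the paper's own reliance on (\ref{1.21}); the only imprecision in your sketch is that you list two of the three cross terms of $(\Delta\psi_1)^2$ (the omitted one, $2\psi_{1,r}\psi_{1,rr}=\partial_r(\psi_{1,r}^2)$ after multiplication by $r$, is the source of the surviving $\psi_{1,r}^2|_{r=R}$ surface term you correctly announce).
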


\begin{proof}
  We multiply (\ref{1.18}) by $\psi_{1,zz}$ and integrate over $\Omega$ to obtain
  \begin{equation}
    -\intop_\Omega\bigg(\psi_{1,rr}\psi_{1,zz}+\psi_{1,zz}^2+3{\psi_{1,r}\over r}\psi_{1,zz}\bigg)dx= \intop_\Omega\Gamma\psi_{1,zz}dx.
    \label{4.2}
  \end{equation}
  Integrating by parts with respect to $r$ in the first term gives
  $$\eqal{
      &-\intop_\Omega(\psi_{1,r}\psi_{1,zz}r)_{,r}drdz+\intop_\Omega\psi_{1,r}\psi_{1,zzr}dx+ \intop_\Omega\psi_{1,r}\psi_{1,zz}drdz\cr
      &\quad-\intop_\Omega\psi_{1,zz}^2dx-3\intop_\Omega\psi_{1,r}\psi_{1,zz}drdz=\intop_\Omega\Gamma\psi_{1,zz}dx.\cr}
  $$
  Thus
  \begin{equation}\eqal{
    &-\intop_{-a}^a[\psi_{1,r}\psi_{1,zz}r|_{r=0}^{r=R}dz+\intop_\Omega\psi_{1,r}\psi_{1,zzr}dx-\intop_\Omega \psi_{1,zz}^2dx\cr
    &\quad-2\intop_\Omega\psi_{1,r}\psi_{1,zz}drdz=\intop_\Omega\Gamma\psi_{1,zz}dx.\cr}
    \label{4.3}
  \end{equation}
  We note that the first integral vanishes since $\psi_{1,r}|_{r=0}=0$ (recall expansion (\ref{1.21})) and $\psi_{1,zz}|_{r=R}=0$. We now integrate by parts with respect to $z$ in the second and the last terms on the left-hand side and use that $\psi_{1,r}|_{S_2}=0$ (since $\psi_1|_S=0$, recall (\ref{1.18})), and we multiply by $-1$, to obtain
  \begin{equation}
    \intop_\Omega(\psi_{1,zr}^2+\psi_{1,zz}^2)dx-2\intop_\Omega\psi_{1,rz}\psi_{1,z}drdz=-\intop_\Omega \Gamma\psi_{1,zz}dx.
    \label{4.4}
  \end{equation}
  We note that the last term on the left-hand side equals
  $$
    -\intop(\psi_{1,z}^2)_{,r}drdz=-\intop_{-a}^a[\psi_{1,z}^2]_{r=0}^{r=R}dz=\intop_{-a}^a\psi_{1,z}^2|_{r=0}dz,
  $$
  since $\psi_{1,z}|_{r=R}=0$. Applying this in (\ref{4.4}), and using the Young inequality to absorb $\psi_{1,zz}$ by the left-hand side, we obtain
  \begin{equation}
    \intop_\Omega(\psi_{1,rz}^2+\psi_{1,zz}^2)dx+\intop_{-a}^a\psi_{1,z}^2|_{r=0}dz\le c|\Gamma|_{2,\Omega}^2.
    \label{4.5}
  \end{equation}
  We now multiply $(\ref{1.18})_1$ by $\psi_{1,r}/r$ and integrate over $\Omega$ to obtain
  \begin{equation}
    3\intop_\Omega{\psi_{1,r}^2\over r^2}dx=-\intop_\Omega\bigg(\psi_{1,rr}{\psi_{1,r}\over r}+\psi_{1,zz}{\psi_{1,r}\over r}+\Gamma{\psi_{1,r}\over r}\bigg)dx.
    \label{4.6}
  \end{equation}
  This first term on the right-hand side equals
  $$
    -{1\over 2}\intop_\Omega\partial_r\psi_{1,r}^2drdz=-{1\over 2}\intop_{-a}^a\psi_{1,r}^2|_{r=R}dz,
  $$
  where we used that $\psi_{1,r}|_{r=0}=0$ (recall expansion (\ref{1.21})) in the last equality.

  As for the other terms on the right-hand side of (\ref{4.6}) we apply the Young inequality to absorb $\psi_{1,r}/r$ by the left-hand side. We obtain
  $$
    \intop_\Omega{\psi_{1,r}^2\over r^2}dx+{1\over 2}\intop_{-a}^a\psi_{1,r}^2|_{r=R}dz\le c(|\psi_{1,zz}|_{2,\Omega}^2+|\Gamma|_{2,\Omega}^2).
  $$
  The claim (\ref{4.1}) follows from this, (\ref{4.5}), and from the equation $(\ref{1.18})_1$ for $\psi_1$, which lets us estimate $\psi_{1,rr}$ in terms of $\psi_{1,zz}$, $\psi_{1,r}/r$.
\end{proof}

\begin{lemma}[$H^3$ elliptic estimates on $\psi_1$]\label{l4.2}
  If $\psi_1$ is a sufficiently regular solution to (\ref{1.18}) then
  \begin{equation}
    \intop_\Omega(\psi_{1,zzr}^2+\psi_{1,zzz}^2)dx+\intop_{-a}^a\psi_{1,zz}^2|_{r=0}dz\le c|\Gamma_{,z}|_{2,\Omega}^2
    \label{4.7}
  \end{equation}
  and
  \begin{equation}\eqal{
      &\intop_\Omega(\psi_{1,rrz}^2+\psi_{1,rzz}^2+\psi_{1,zzz}^2)dx+\intop_{-a}^a\psi_{1,zz}^2|_{r=0}dz +\intop_{-a}^a\psi_{1,rz}^2|_{r=R}dz\cr
      &\le c|\Gamma_{,z}|_{2,\Omega}^2.\cr}
    \label{4.8}
  \end{equation}
  as well as
  \begin{equation}
    \bigg|{1\over r}\psi_{1,rz}\bigg|_{2,\Omega}\le c|\Gamma_{,z}|_{2,\Omega}.
    \label{4.9}
  \end{equation}
\end{lemma}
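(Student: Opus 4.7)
The plan is to reduce the $H^3$ estimates for $\psi_1$ to an $H^2$-type argument applied to $w\colon=\psi_{1,z}$. Differentiating $(\ref{1.18})_1$ with respect to $z$ yields
$$
-\Delta w-{2\over r}w_{,r}=\Gamma_{,z},
$$
which is identical in form to $(\ref{1.18})_1$. The first task is to verify which boundary conditions $w$ inherits. Since $\psi_1|_{S_1}=0$ holds for all $z$, tangential differentiation along $S_1$ gives $w|_{r=R}=\psi_{1,z}|_{r=R}=0$. On $S_2$, property (\ref{1.20}) gives $w_{,z}|_{S_2}=\psi_{1,zz}|_{S_2}=0$. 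On the axis $r=0$, differentiating the Liu--Wang expansion $(\ref{1.21})_5$ in $z$ yields $\psi_{1,rz}=2d_{2,z}(z,t)r+\dots$, so $w_{,r}|_{r=0}=\psi_{1,rz}|_{r=0}=0$. These are precisely the structural boundary conditions that allowed the proof of Lemma \ref{l4.1} to go through for $\psi_1$.

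For estimate (\ref{4.7}) I would then mimic the first part of Lemma \ref{l4.1}: multiply the differentiated equation by $w_{,zz}$ and integrate over $\Omega$. Integration by parts in $r$ in the $w_{rr}w_{zz}$ term produces a boundary contribution $[w_{,r}w_{,zz}r]_{r=0}^{r=R}$ that vanishes (factor $r$ at the axis, and $w_{,zz}=\psi_{1,zzz}|_{r=R}=0$ from tangential differentiation of $w|_{r=R}=0$). A subsequent integration by parts in $z$ in the arising $w_{,r}w_{,rzz}$ term has no boundary contribution because $w_{,rz}=\psi_{1,rzz}$ vanishes on $S_2$ (this follows by differentiating (\ref{1.20}) tangentially in $r$). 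The single non-vanishing boundary term, generated by integrating $\iint w_{,r}w_{zz}\,drdz$ by parts in $z$ and then recognizing the $r$-derivative of $w_{,z}^2$, is exactly $\int_{-a}^a \psi_{1,zz}^2|_{r=0}\,dz$. After Young's inequality on the right, one obtains (\ref{4.7}).

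For (\ref{4.9}) and the boundary term in (\ref{4.8}) I would imitate the second half of the proof of Lemma \ref{l4.1}: multiply the differentiated equation by $\psi_{1,rz}/r$ and integrate. The term $-\psi_{1,rrz}\cdot\psi_{1,rz}/r$, after cancelling the factor $r$ from the volume element, becomes $-\tfrac12\partial_r(\psi_{1,rz}^2)$, and integration in $r$ produces $-\tfrac12\int_{-a}^a\psi_{1,rz}^2|_{r=R}\,dz$ (the $r=0$ endpoint drops thanks to $\psi_{1,rz}|_{r=0}=0$). Applying Young's inequality to absorb $\psi_{1,rz}/r$ on the left and using (\ref{4.7}) to control $\psi_{1,zzz}$ yields (\ref{4.9}) together with the boundary term $\int_{-a}^a\psi_{1,rz}^2|_{r=R}\,dz$ needed for (\ref{4.8}). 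Finally, to obtain $\psi_{1,rrz}$ in (\ref{4.8}) I would simply solve algebraically for $\psi_{1,rrz}$ from the $z$-differentiated equation,
$$
\psi_{1,rrz}=-\psi_{1,zzz}-{3\over r}\psi_{1,rz}-\Gamma_{,z},
$$
and take $L_2(\Omega)$ norms, invoking (\ref{4.7}) and (\ref{4.9}).

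The main obstacle is the careful bookkeeping of boundary terms, in particular justifying that they either vanish identically (on $S_2$, using (\ref{1.20}), and on the axis, using the Liu--Wang expansion (\ref{1.21})) or reduce to the non-negative trace quantities appearing on the left-hand sides of (\ref{4.7})--(\ref{4.8}). All other ingredients are mechanical repetitions of the estimates already carried out for $\psi_1$ in Lemma \ref{l4.1}.
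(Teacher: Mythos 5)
Your proposal is correct, and for (\ref{4.7}) it coincides with the paper's argument: both test the $z$-differentiated equation against $\pm\psi_{1,zzz}$ and dispose of the boundary terms using $\psi_{1,zz}|_{S_2}=0$ (i.e. (\ref{1.20})), $\psi_{1,rz}|_{r=0}=0$ from the Liu--Wang expansion (\ref{1.21}), and $\psi_{1,z}|_{r=R}=\psi_{1,zz}|_{r=R}=0$, arriving at the same sign-definite trace term $\intop_{-a}^a\psi_{1,zz}^2|_{r=0}dz$. For the remaining estimates you deviate slightly: the paper's second multiplier is $\psi_{1,rrz}$, which yields $\intop_\Omega\psi_{1,rrz}^2dx$ and the trace $\intop_{-a}^a\psi_{1,rz}^2|_{r=R}dz$ directly (at the cost of a negative term $-\tfrac12\intop_{-a}^a\psi_{1,zz}^2|_{r=0}dz$ that must be compensated by (\ref{4.7})), after which (\ref{4.9}) is read off from the equation; you instead use $\psi_{1,rz}/r$ as the second multiplier, mirroring the second half of Lemma \ref{l4.1}, which produces (\ref{4.9}) and the $r=R$ trace directly and then recovers $\psi_{1,rrz}$ algebraically from $\psi_{1,rrz}=-\psi_{1,zzz}-\tfrac{3}{r}\psi_{1,rz}-\Gamma_{,z}$. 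Both routes are valid; yours has the small advantage of being a verbatim application of the Lemma \ref{l4.1} scheme to $w=\psi_{1,z}$ and of avoiding the sign-indefinite boundary term, while the paper's avoids the weighted integral $\intop_\Omega\psi_{1,rz}^2/r^2\,dx$ as an intermediate quantity. All the boundary identities you invoke ($\psi_{1,rzz}|_{S_2}=0$ by tangential $r$-differentiation of (\ref{1.20}), $\psi_{1,rz}|_{r=0}=0$ from $(\ref{1.21})_5$, vanishing of pure $z$-derivatives on $S_1$) are exactly those used in the paper, so there is no gap.
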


\begin{proof}
  First we show (\ref{4.7}). We differentiate $(\ref{1.18})_1$ with respect to $z$, multiply by $-\psi_{1,zzz}$ and integrate over $\Omega$ to obtain
  \begin{equation}\eqal{
      &\intop_\Omega\psi_{1,rrz}\psi_{1,zzz}dx+\intop_\Omega\psi_{1,zzz}^2dx+3\intop_\Omega{1\over r}\psi_{1,rz}\psi_{1,zzz}dx\cr
      &=-\intop_\Omega\Gamma_{,z}\psi_{1,zzz}dx.\cr}
    \label{4.10}
  \end{equation}
  Integrating by parts with respect to $z$ in the first term yields
  \begin{equation}
    \intop_\Omega\psi_{1,rrz}\psi_{1,zzz}dx=\intop_\Omega(\psi_{1,rrz}\psi_{1,zz})_{,z}dx-\intop_\Omega\psi_{1,rrzz}\psi_{1,zz}dx,
    \label{4.11}
  \end{equation}
  where the first term vanishes due to (\ref{1.20}). Integrating the last integral in (\ref{4.11}) by parts with respect to $r$ gives
  $$
    -\intop_\Omega(\psi_{1,rzz}\psi_{1,zz}r)_{,r}drdz+\intop_\Omega\psi_{1,rzz}^2dx+\intop_\Omega\psi_{1,rzz}\psi_{1,zz}drdz,
  $$
  where the first integral vanishes, since $\psi_{1,rzz}|_{r=0}=\psi_{1,zz}|_{r=R}=0$ (recall (\ref{1.21}) and (\ref{1.18})). Thus, (\ref{4.10}) becomes
  \begin{equation}\eqal{
      &\intop_\Omega(\psi_{1,rzz}^2+\psi_{1,zzz}^2)dx+\intop_\Omega(\psi_{1,rzz}\psi_{1,zz}+3\psi_{1,rz}\psi_{1,zzz})drdz\cr
      &=-\intop_\Omega\Gamma_{,z}\psi_{1,zzz}dx.\cr}
    \label{4.12}
  \end{equation}
  Integrating by parts with respect to $z$ in the last term on the left-hand side of (\ref{4.12}) and using that $\psi_{1,zz}|_{S_2}=0$ (recall (\ref{1.20})) we get
  \begin{equation}
    \intop_\Omega(\psi_{1,rzz}^2+\psi_{1,zzz}^2)dx-\intop_\Omega\partial_r\psi_{1,zz}^2drdz=-\intop_\Omega \Gamma_{,z}\psi_{1,zzz}dx.
    \label{4.13}
  \end{equation}
  Recalling (\ref{1.18}) that $\psi_{1,zz}|_{r=R}=0$ and using the Young inequality to absorb $\psi_{1,zzz}$ we obtain
  $$
    \intop_\Omega(\psi_{1,rzz}^2+\psi_{1,zzz}^2)dx+\intop_{-a}^a\psi_{1,zz}^2|_{r=0}dz\le c|\Gamma_{,z}|_{2,\Omega}^2,
  $$
  which gives (\ref{4.7}).

  As for (\ref{4.8}), we differentiate $(\ref{1.18})_1$ with respect to $z$, multiply by $\psi_{1,rrz}$ and integrate over $\Omega$ to obtain
  \begin{equation}
    -\intop_\Omega\bigg(\psi_{1,rrz}^2+\psi_{1,zzz}\psi_{1,rrz}+3{1\over r}\psi_{1,rz}\psi_{1,rrz}\bigg)dx=\intop_\Omega\Gamma_{,z}\psi_{1,rrz}dx.
    \label{4.14}
  \end{equation}
  We integrate the second term on the left-hand side by parts in $z$, and recall (\ref{1.20}) that $\psi_{1,zz}|_{S_2}=0$, to get
  $$\eqal{
      &-\intop_\Omega\psi_{1,zzz}\psi_{1,rrz}dx=\intop_\Omega\psi_{1,zz}\psi_{1,rrzz}dx\cr
      &=\intop_\Omega(\psi_{1,zz}\psi_{1,rzz}r)_{,r}drdz-\intop_\Omega\psi_{1,rzz}^2dx-\intop_\Omega\psi_{1,zz} \psi_{1,rzz}drdz.\cr}
  $$
  We note that the first term on the right-hand side vanishes since $\psi_{1,rzz}|_{r=0}=0$ (recall (\ref{1.21})) and $\psi_{1,zz}|_{r=R}=0$ (recall (\ref{1.18})), and so (\ref{4.14}) becomes
  \begin{equation}\eqal{
      &\intop_\Omega(\psi_{1,rrz}^2+\psi_{1,rzz}^2)dx+\intop_\Omega(\psi_{1,zz}\psi_{1,rzz}+3\psi_{1,rz}\psi_{1,rrz})drdz\cr
      &=-\intop_\Omega\Gamma_{,z}\psi_{1,rrz}dx.\cr}
    \label{4.15}
  \end{equation}
  Since the first integral from the second term from the above l.h.s. equals
  $$
    {1\over 2}\intop_{-a}^a[\psi_{1,zz}]|_{r=0}^{r=R}dz=-{1\over 2}\intop_{-a}^a\psi_{1,zz}^2|_{r=0}dz
  $$
  (as $\psi_{1,zz}|_{r=R}=0$, recall (\ref{1.18})), and the last integral from the l.h.s. of (\ref{4.15}) equals
  $$
    {3\over 2}\intop_\Omega\partial_r\psi_{1,rz}^2drdz={3\over 2}\intop_{-a}^a[\psi_{1,rz}^2]_{r=0}^{r=R}dz={3\over 2}\intop_{-a}^a\psi_{1,rz}^2|_{r=R}dz
  $$
  (as $\psi_{1,rz}|_{r=0}=0$, recall (\ref{1.21})). Hence (\ref{4.15}) becomes
  \begin{equation}\eqal{
      &\intop_\Omega(\psi_{1,rrz}^2+\psi_{1,rzz}^2)dx+\intop_{-a}^a\bigg(-{1\over 2}\psi_{1,zz}^2|_{r=0}+{3\over 2}\psi_{1,rz}^2|_{r=R}\bigg)dz\cr
      &=-\intop_\Omega\Gamma_{,z}\psi_{1,rrz}dx.\cr}
    \label{4.16}
  \end{equation}
  We now use the Young inequality to absorb $\psi_{1,rrz}$ by the left-hand side to obtain (\ref{4.8}) using also (\ref{4.7}), which in turn implies (\ref{4.9}) by differentiating $(\ref{1.18})_1$ in $z$.
\end{proof}

\section{Estimates for swirl $u=rv_\varphi$}\label{s5}

We derive energy estimate for $\nabla u$. Recall that swirl $u=rv_\varphi$ satisfies
\begin{equation}\eqal{
  &u_{,t}+v\cdot\nabla u-\nu\Delta u+{2\nu\over r}u_{,r}=f_0,\cr
  &u=0\quad &{\rm on}\ \ S_1,\cr
  &u_{,z}=0\quad &{\rm on}\ \ S_2.\cr}
  \label{5.1}
\end{equation}

\begin{lemma}[see Lemma 5.1 in \cite{Z1,Z2,OZ}]\label{l5.1}
  Any regular solution $u$ to (\ref{5.1}) satisfies
  \begin{equation}
    |u_{,z}(t)|_{2,\Omega}^2+\nu|\nabla u_{,z}|_{2,\Omega^t}^2\le cD_4^2,
    \label{5.2}
  \end{equation}
  \begin{equation}
    |u_{,r}(t)|_{2,\Omega}^2+\nu(|u_{,rr}|_{2,\Omega^t}^2+|u_{,rz}|_{2,\Omega^t}^2)\le cD_5^2,
    \label{5.3}
  \end{equation}
  where $D_4^2={1\over\nu}(D_1^2+D_2^2+|u_{,z}(0)|_{2,\Omega}^2+|f_0|_{2,\Omega}^2)$ and $D_5$ is defined in (\ref{5.11}).
\end{lemma}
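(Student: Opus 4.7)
The plan is to prove both estimates by differentiating the swirl equation (\ref{5.1}) in the appropriate direction, testing against the resulting derivative, and then closing a Gronwall-type argument using the energy bound of Lemma~\ref{l2.4} and the maximum principle of Lemma~\ref{l2.5}.

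\emph{Step 1 (proof of (\ref{5.2})).} Differentiating (\ref{5.1}) in $z$ gives
$$
u_{,zt}+v\cdot\nabla u_{,z}+v_{,z}\cdot\nabla u-\nu\Delta u_{,z}+\frac{2\nu}{r}u_{,rz}=f_{0,z}.
$$
Multiply by $u_{,z}$ and integrate over $\Omega$. The convective term $\int v\cdot\nabla u_{,z}\,u_{,z}\,dx$ vanishes by divergence-freeness and $v\cdot\bar n=0$. The diffusion integrates by parts to $\nu|\nabla u_{,z}|_{2,\Omega}^2$ with no boundary contribution, since $u_{,z}|_{S_1}=0$ (tangential derivative of $u|_{S_1}=0$) and $u_{,z}|_{S_2}=0$ is prescribed. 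The singular term reduces to $\nu\int_{-a}^a[u_{,z}^2]_{r=0}^{r=R}\,dz$, which vanishes since $u_{,z}|_{r=R}=0$ and, by the Liu--Wang expansion (\ref{1.21}), $u=rv_\varphi=O(r^2)$, whence $u_{,z}|_{r=0}=0$. After an IBP in $z$ the forcing is bounded by $\tfrac{\nu}{4}|u_{,zz}|_{2,\Omega}^2+c|f_0|_{2,\Omega}^2/\nu$. The remaining commutator $\int v_{,z}\cdot\nabla u\,u_{,z}\,dx$ is rearranged via IBP in both $z$ (using $u_{,z}|_{S_2}=0$) and $r$ (using $v_r|_{r=R}=0$ and the $r$-weight on the axis) into a form involving $v_r u_{,rz}u_{,z}$ and $v_r u_{,r}u_{,zz}$, which are then bounded by H\"older--Sobolev with the key ingredient $|u|_{\infty,\Omega^t}\le D_2$ (Lemma~\ref{l2.5}) applied after IBP to replace a factor of $\nabla u$ by $u$. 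Integrating in time and invoking Gronwall with $|\nabla v|_{L_2(\Omega^t)}\le D_1$ produces (\ref{5.2}).

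\emph{Step 2 (proof of (\ref{5.3})).} Differentiating (\ref{5.1}) in $r$ and using the cylindrical identity $(\Delta u)_{,r}=\Delta u_{,r}-u_{,r}/r^2$ yields
$$
u_{,rt}+v\cdot\nabla u_{,r}+v_{,r}\cdot\nabla u-\nu\Delta u_{,r}-\frac{\nu}{r^2}u_{,r}+\frac{2\nu}{r}u_{,rr}=f_{0,r},
$$
which we test against $u_{,r}$. The boundary conditions are now more delicate: $u_{,r}|_{r=R}$ is not prescribed, while on $S_2$ only $u_{,rz}=0$ holds and near the axis $u_{,r}=O(r)$. The singular term $(2\nu/r)u_{,rr}u_{,r}$ together with IBP in $r$ produces a boundary contribution $\nu\int_{-a}^a u_{,r}^2|_{r=R}\,dz$ of favorable sign that absorbs the unfavorable $-(\nu/r^2)u_{,r}^2$ contribution by Hardy's inequality (Lemma~\ref{l2.1}). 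The commutator $\int v_{,r}\cdot\nabla u\,u_{,r}\,dx=\int(v_{r,r}u_{,r}^2+v_{z,r}u_{,z}u_{,r})\,dx$ is treated by: (i) expressing $v_{z,r}=\omega_\varphi+v_{r,z}$, (ii) integrating by parts so that one derivative of $u$ is replaced by $u$ itself, and (iii) applying the maximum principle $|u|_{\infty,\Omega^t}\le D_2$. This generates precisely the $D_1^2 D_2^2$ contribution entering $D_5^2$. Together with (\ref{5.2}) to control $u_{,rz}$ terms (through the mixed bound $\|u_{,rz}\|_{L_2(\Omega^t)}\le c D_4$), Gronwall's inequality closes the estimate.

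\emph{Main obstacle.} The essential difficulty is the commutator term $\int v_{,r}\cdot\nabla u\,u_{,r}\,dx$ in Step 2 (and its analogue in Step 1). A direct H\"older--Sobolev estimate produces a $|\nabla v|_{2,\Omega}^4|u_{,r}|_{2,\Omega}^2$-type integrand in the Gronwall step, which is \emph{not} controlled by Lemma~\ref{l2.4}. The resolution is to integrate by parts so as to trade one power of $\nabla u$ for the pointwise bound on $u=rv_\varphi$ afforded by the maximum principle: this replaces $|\nabla v|_2^4$ by the integrable factor $D_2^2|\nabla v|_2^2$, with $\int_0^t|\nabla v|_2^2\,dt'\le D_1^2/\nu$. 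The various boundary terms appearing along the way are only manageable because of the precise combination of boundary conditions (\ref{1.6}) and the Liu--Wang expansion (\ref{1.21}), which is why this lemma requires sufficiently regular solutions.
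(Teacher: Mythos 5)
Your Step 1 matches the paper's argument: the one essential move is the integration by parts $\int_\Omega v_{,z}\cdot\nabla u\,u_{,z}\,dx=-\int_\Omega v_{,z}\cdot\nabla u_{,z}\,u\,dx$, after which the maximum principle $|u|_{\infty,\Omega}\le D_2$ and the energy bound on $|v_{,z}|_{2,\Omega^t}$ close the estimate; you identify this mechanism correctly in your ``main obstacle'' paragraph, and no Gronwall step is even needed.

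Step 2, however, has two genuine gaps. First, your treatment of the singular term is wrong: you propose to absorb $-\nu\int_\Omega u_{,r}^2/r^2\,dx$ using the boundary contribution $\nu\int_{-a}^au_{,r}^2|_{r=R}\,dz$ together with Hardy's inequality. A boundary integral at $r=R$ cannot control an interior integral concentrated near the axis, and the functional inequality you would need, $\int_0^Rr^{-1}g^2\,dr\le C\int_0^Rr\,g'^2\,dr$ for $g(0)=0$, is exactly the excluded critical case $\beta=1/p$ of Lemma~\ref{l2.1} and is false (take $g=\min(r/\varepsilon,1)$: the left side grows like $\log(1/\varepsilon)$ while the right side stays bounded). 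The paper instead observes that $u_{,r}/r=\omega_z=v_{\varphi,r}+v_\varphi/r$, so the time integral of this term is bounded by $cD_1^2$ directly from the energy estimate (\ref{2.5}); this is precisely the $D_1^2$ contribution in $D_5^2$. Second, you never address the boundary term $-\nu\int_{-a}^a\big[(u_{,rr}+{1\over r}u_{,r})u_{,r}r\big]\big|_{r=R}\,dz$ produced by integrating $-\nu\int_\Omega(\Delta u)_{,r}u_{,r}\,dx$ by parts: $u_{,rr}|_{r=R}$ is not prescribed by any boundary condition and is not controlled by a trace estimate at this level of regularity. The paper's resolution is to restrict the swirl equation $(\ref{5.1})_1$ to $S_1$, where $u=u_{,t}=u_{,zz}=v\cdot\nabla u=0$, obtaining $-\nu(u_{,rr}+{1\over R}u_{,r})=f_0-{2\nu\over R}u_{,r}$ on $r=R$; the resulting term $\int f_0u_{,r}r|_{r=R}$ then cancels exactly against the boundary term arising from the integration by parts of $\int_\Omega f_{0,r}u_{,r}\,dx$. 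Without these two ingredients the estimate (\ref{5.3}) does not close.
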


\begin{proof}
  Differentiating (\ref{5.1}) with respect to $z$, multiplying by $u_{,z}$ and integrating over $\Omega$ we obtain
  \begin{equation}\eqal{
    &{1\over 2}{d\over dt}|u_{,z}|_{2,\Omega}^2-\nu\intop_\Omega\divv(\nabla u_{,z}u_{,z})dx+\nu\intop_\Omega|\nabla u_{,z}|^2dx\cr
    &\quad+2\nu\intop_\Omega u_{,zr}u_{,z}drdz+\intop_\Omega v_{,z}\nabla uu_{,z}dx+{1\over 2}\intop_\Omega u\cdot\nabla(u_{,z}^2)dx\cr
    &=\intop_\Omega f_{0,z}u_{,z}dx.\cr}
    \label{5.4}
  \end{equation}
  The second term vanishes due to the boundary condition $u_{,z}|_S=0$ (recall $(\ref{1.2})_{1,2}$. The fourth term equals
  $$
    \nu\intop_\Omega\partial_r(u_{,z}^2)drdz=\nu\intop_{-a}^au_{,z}^2|_{r=0}^{r=R}dz=0,
  $$
  because $u_{,z}|_{r=R}=0$ (see $(\ref{1.2})_{1,2}$) and $u_{,z}|_{r=0}=0$ (recall (\ref{1.21})). Similarly, the sixth term equals
  $$
    {1\over 2}\intop_\Omega v\cdot\nabla u_{,z}^2dx={1\over 2}\intop_Sv\cdot\bar nu_{,z}^2dS=0,
  $$
  because $v\cdot\bar n|_S=0$ (recall $(\ref{1.2})_{1,2}$). Integrating by parts in the fifth term in (\ref{5.4}), and noting that the boundary term vanishes (since $u_{,z}=0$ on $S$), we obtain
  $$
    \bigg|\intop_\Omega(v_{,z}\cdot\nabla)u\cdot u_{,z}dx\bigg|=\bigg|\intop_\Omega v_{,z}\cdot\nabla u_{,z}udx\bigg|\le\delta\intop_\Omega|\nabla u_{,z}|^2dx+{c\over\delta}|u|_{\infty,\Omega}^2\intop_\Omega v_{,z}^2dx.
  $$
  Finally, integrating the right-hand side of (\ref{5.4}) by parts in $z$ we obtain
  $$
    \intop_\Omega f_{0,z}u_{,z}dx=-\intop_\Omega f_0u_{,zz}dx\le\delta|u_{,zz}|_{2,\Omega}^2+{c\over\delta}|f_0|_{2,\Omega}^2,
  $$
  where we used that $\intop_{S_2}|f_0u_{,z}|_{z=-a}^{z=a}rdr=0$ because $u_{,z}|_{S_2}=0$ (recall $(\ref{1.2})_2$). Using the above results in (\ref{5.4}) gives
  $$
    {d\over dt}|u_{,z}|_{2,\Omega}^2+\nu|\nabla u_{,z}|_{2\Omega}^2\le{c\over\nu}(|u|_{\infty,\Omega}^2|v_{,z}|_{2,\Omega}^2+|f_0|_{2,\Omega}^2).
  $$
  Integrating in $t\in(0,T)$ gives
  \begin{equation}\eqal{
      &|u_{,z}(t)|_{2,\Omega}^2+\nu|\nabla u_{,z}|_{2,\Omega^t}^2\cr
      &\le{c\over\nu}|u|_{\infty,\Omega^t}^2|v_{,z}|_{2,\Omega^t}^2+|u_{,z}(0)|_{2,\Omega}^2+|f_0|_{2,\Omega^t}^2)\cr
      &\le cD_4^2\cr}
    \label{5.5}
  \end{equation}
  which proves (\ref{5.2}) because energy estimate (\ref{2.5}) and maximum principle (\ref{2.10}) for the swirl $u$ were used. To prove (\ref{5.3}) we differentiate $(\ref{5.1})_1$ with respect to $r$, multiply the resulting equation by $u_{,r}$ and integrate over $\Omega$ to obtain
  \begin{equation}\eqal{
      &{1\over 2}{d\over dt}|u_{,r}|_{2,\Omega}^2+\intop_\Omega v_{,r}\cdot\nabla uu_{,r}dx+\intop_\Omega v\cdot\nabla u_{,r}u_{,r}dx\cr
      &\quad-\nu\intop_\Omega(\Delta u)_ru_{,r}dx+2\nu\intop_\Omega u_{,rr}u_{,r}drdz-2\nu\intop_\Omega{u_{,r}^2\over r^2}dx\cr
      &=\intop_\Omega f_{0,r}u_{,r}dx.\cr}
    \label{5.6}
  \end{equation}
  We now examine particular terms in (\ref{5.6}). The second term equals
  $$\eqal{
      &\intop_\Omega v_{,r}\cdot\nabla uu_{,r}rdrdz=\intop_\Omega(rv_{r,r}u_{,r}+rv_{z,r}u_{,z})u_{,r}drdz\cr
      &=-\intop_\Omega[(rv_{r,r}u_{,r})_{,r}+(rv_{z,r}u_{,r})_{,z}]udrdz\equiv I,\cr}
  $$
  where we integrated by parts with respect to $r$ and $z$, respectively, and used the boundary conditions $u|_{S_1}=u|_{r=0}=0$ (recall (\ref{1.2}) and (\ref{1.21})) and $v_{z,r}|_{S_2}=0$ (recall (\ref{1.2})). Continuing, we have
  $$\eqal{
      I&=-\intop_\Omega[(rv_{r,r})_{,r}+(rv_{z,r})_{,z}]u_{,r}udrdz\cr
      &\quad-\intop_\Omega[rv_{r,r}u_{,rr}+rv_{z,r}u_{,rz}]udrdz=I_1+I_2.\cr}
  $$
  Differentiating the divergence-free equation $(\ref{1.7})_4$ in $r$ gives $v_{r,rr}+v_{z,zr}+{v_{r,r}\over r}-{v_r\over r^2}=0$. Hence $I_1$ equals
  $$
    I_1=-\intop_\Omega{v_r\over r}u_{,r}udrdz.
  $$
  Using the Young inequality in $I_2$ yields
  $$
    |I_2|\le{\nu\over 2}(|u_{,rr}|_{2,\Omega}^2+ |u_{,rz}|_{2\Omega}^2)+{c\over\nu}|u|_{\infty,\Omega}^2(|v_{r,r}|_{2,\Omega}^2+|v_{z,r}|_{2,\Omega}^2).
  $$
  The third term on the l.h.s. of (\ref{5.6}) equals
  $$
    {1\over 2}\intop_\Omega v\cdot\nabla u_{,r}^2dx={1\over 2}\intop_\Omega\divv(vu_{,r}^2)dx=0,
  $$
  since $v\cdot\bar n|_S=0$. As for the fourth term in (\ref{5.6}) we have
  $$\eqal{
      &-\intop_\Omega(\Delta u)_{,r}u_{,r}dx=-\intop_\Omega\bigg(u_{,rrr}+\bigg({1\over r}u_{,r}\bigg)_{,r} +u_{,rzz}\bigg)u_{,r}rdrdz\cr
      &=-\intop_\Omega\bigg[\bigg(u_{,rr}+{1\over r}u_{,r}\bigg)u_{,r}r\bigg]_{,r}drdz+\intop_\Omega u_{,rr}(u_{,r}r)_{,r}drdz\cr
      &\quad+\intop_\Omega{1\over r}u_{,r}(u_{,r}r)_{,r}drdz+\intop_\Omega u_{,rz}^2dx\cr
      &=-\intop_{-a}^a\bigg[\bigg(u_{,rr}+{1\over r}u_{,r}\bigg)u_{,r}r\bigg]\bigg|_{r=0}^{r=R}dz+ \intop_\Omega(u_{,rr}^2+u_{,rz}^2)dx\cr
      &\quad+\intop_\Omega{u_{,r}^2\over r^2}dx+2\intop_\Omega u_{,rr}u_{,r}drdz.\cr}
  $$
  Using the above expressions in (\ref{5.6}) yields
  \begin{equation}\eqal{
    &{1\over 2}{d\over dt}|u_{,r}|_{2,\Omega}^2+{\nu\over 2}\intop_\Omega(u_{,rr}^2+u_{,rz}^2)dx-\nu\intop_\Omega{u_{,r}^2\over r^2}dx\cr
    &\quad-\nu\intop_{-a}^a\bigg[\bigg(u_{,rr}+{1\over r}u_{,r}\bigg)u_{,r}r\bigg]\bigg|_{r=0}^{r=R}dz+4\nu\intop_\Omega u_{,rr}u_{,r}drdz\cr
    &\le\intop_\Omega f_{0,r}u_{,r}dx+\intop_\Omega{v_r\over r}{u_{,r}\over r}udx+{c\over\nu}|u|_{\infty,\Omega}^2(|v_{r,r}|_{2,\Omega}^2+|v_{z,r}|_{2,\Omega}^2).\cr}
    \label{5.7}
  \end{equation}
  The last term on the l.h.s. of (\ref{5.7}) equals
  $$
    2\nu\intop_{-a}^au_{,r}^2\bigg|_{r=0}^{r=R}dz=2\nu\intop_{-a}^au_{,r}^2\bigg|_{r=R}dz.
  $$
  Since the expansion (\ref{1.21}) implies that
  \begin{equation}
    u=b_1(z,t)r^2+b_2(z,t)r^3+\dots,
    \label{5.8}
  \end{equation}
  thus that $u_{,r}|_{r=0}=0$.

  Projecting $(\ref{5.1})_1$ on $S_1$ implies
  \begin{equation}\eqal{
      &-\nu\intop_{-a}^a\bigg(u_{,rr}+{1\over r}u_{,r}\bigg)u_{,r}r\bigg|_{r=R}dz=-2\nu\intop_{-a}^au_{,r}^2\bigg|_{r=R}dz\cr
      &\quad+\intop_{-a}^af_0u_{,r}r\bigg|_{r=R}dz,\cr}
    \label{5.9}
  \end{equation}
  where in the last equality we used $(\ref{1.9})_1$ projected onto $S_1$. Integration by parts in $r$ in the first term on the r.h.s. of (\ref{5.7}) gives
  $$
    \intop_{-a}f_0u_{,r}r|_{r=R}dz-\intop_\Omega f_0u_{,rr}dx-\intop_\Omega f_0u_{,r}drdz,
  $$
  where we used (\ref{5.8}) again to note that $u_{,r}|_{r=0}=0$. We note again that the first term above cancells with the last term of (\ref{5.9}), while the remaining terms can be estimated using the Young inequality by
  $$
    {\nu\over 4}|u_{,rr}|_{2,\Omega}^2+\nu\bigg|{u_{,r}\over r}\bigg|_{2,\Omega}^2+{c\over\nu}|f_0|_{2,\Omega}^2.
  $$
  Using the above estimates in (\ref{5.7}) and simplifying we get
  \begin{equation}\eqal{
    &{1\over 2}{d\over dt}|u_{,r}|_{2,\Omega}^2+{\nu\over 4}(|u_{,rr}|_{2,\Omega}^2+|u_{,rz}|_{2,\Omega}^2)\le 2\nu\intop_\Omega{u_{,r}^2\over r^2}dx\cr
    &\quad+\intop_\Omega{v_r\over r}{u_{,r}\over r}udx+c|u|_{\infty,\Omega}^2(|v_{r,r}|_{2,\Omega}^2+|v_{z,r}|_{2,\Omega}^2)+{c\over\nu}|f_0|_{2,\Omega}^2.\cr}
    \label{5.10}
  \end{equation}
  Integrating (\ref{5.10}) with respect to time yields
  \begin{equation}\eqal{
      &|u_{,r}(t)|_{2,\Omega}^2+\nu(|u_{,rr}|_{2,\Omega^t}^2+|u_{,rz}|_{2,\Omega^t}^2)\cr
      &\le cD_1^2(1+D_2)+cD_1^2D_2^2+|f_0|_{2,\Omega^t}^2\cr
      &\quad+|u_{,r}(0)|_{2,\Omega}^2\equiv cD_5^2.\cr}
    \label{5.11}
  \end{equation}
  This implies (\ref{5.3}) and concludes the proof.
\end{proof}

\section{Order reduction estimates}\label{s6}

\begin{lemma}\label{l6.1}
  (See \cite{Z1,Z2,OZ}) Any regular solution to (\ref{1.1})--(\ref{1.3}) satisfies
  \begin{equation}\eqal{
    &\|\omega_r\|_{V(\Omega^t)}^2+\|\omega_z\|_{V(\Omega^t)}^2+\bigg|{\omega_r\over r}\bigg|_{2,\Omega^t}^2\cr
    &\le{1\over\nu}\phi(D_1,D_2,D_4,D_5)\bigg({R^{\varepsilon_0}\over\varepsilon_0} |v_\varphi|_{\infty,\Omega^t}^{\varepsilon_0}+{R^{2\varepsilon_0}\over\varepsilon_0^2} |v_\varphi|_{\infty,\Omega^t}^{2\varepsilon_0}\bigg)|\nabla\Gamma|_{2,\Omega^t}\cr
    &\quad+cD_6^2,\cr}
    \label{6.1}
  \end{equation}
  where
  \begin{equation}\eqal{
      D_6^2&=(D_4+D_5)\|f_\varphi\|_{L_2(0,t;L_3(S_1))}\cr
      &\quad+{1\over\nu}(|F_r|_{6/5,2,\Omega^t}^2+|F_z|_{6/5,2,\Omega^t}^2)+|\omega_r(0)|_{2,\Omega}^2+ |\omega_z(0)|_{2,\Omega}^2.\cr}
    \label{6.2}
  \end{equation}
\end{lemma}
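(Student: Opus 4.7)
The plan is to run a joint energy estimate on $\omega_r$ and $\omega_z$, following the same philosophy as Lemma~\ref{l3.1} but now with the two cylindrical vorticity components playing the role of $(\Phi,\Gamma)$. Multiply $(\ref{1.8})_1$ by $\omega_r$, $(\ref{1.8})_3$ by $\omega_z$, sum, and integrate over $\Omega^t$. The time derivative and dissipative terms produce $\|\omega_r\|_{V(\Omega^t)}^2+\|\omega_z\|_{V(\Omega^t)}^2$, while the penalizing $\nu\omega_r/r^2$ in $(\ref{1.8})_1$ supplies $\nu|\omega_r/r|_{2,\Omega^t}^2$. When the Laplacians are integrated by parts, the boundary conditions (\ref{1.6}) show that on $S_2^T$ both $\omega_r$ and $\omega_{z,z}$ vanish, and on $S_1^T$ only $\omega_r$ vanishes, so the single surviving surface integral is $-\nu\int_{S_1^T}\omega_z\omega_{z,r}\,dS\,dt'$.

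To dispose of this boundary term I would project the swirl equation $(\ref{1.9})_1$ onto $S_1$: using that $u|_{S_1}=0$ (whence $u_{,t}$, $u_{,z}$ and $u_{,zz}$ all vanish there) and $v\cdot\bar n|_{S_1}=0$, the equation collapses to $-\nu u_{,rr}+\nu u_{,r}/r=rf_\varphi$ on $S_1$, which in terms of $v_\varphi$ (using $v_\varphi|_{S_1}=0$) gives the trace identity $\omega_{z,r}|_{S_1}=-f_\varphi/\nu$. The surface integral then becomes $\frac{1}{\nu}\int_{S_1^T}\omega_z f_\varphi\,dS\,dt'$; by H\"older on $S_1$ and trace estimates combined with Lemma~\ref{l5.1} (the $D_4$ and $D_5$ bounds for $u_{,z}$ and $u_{,r}$), this is absorbed into $(D_4+D_5)\|f_\varphi\|_{L_2(0,t;L_3(S_1))}$, precisely one of the building blocks of $D_6^2$.

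The heart of the argument is the treatment of the vortex-stretching quadratic form
$$
\int_{\Omega^t}\!\bigl(\omega_r v_{r,r}+\omega_z v_{r,z}\bigr)\omega_r\,dx\,dt'+\int_{\Omega^t}\!\bigl(\omega_r v_{z,r}+\omega_z v_{z,z}\bigr)\omega_z\,dx\,dt'.
$$
Here I would use the identities $\omega_r=-u_{,z}/r$, $\omega_z=u_{,r}/r$ on one factor, and represent $\nabla v'$ by the modified-stream-function formulas (\ref{1.19}) on the other. Then I would integrate by parts in $z$ (using $\omega_r|_{S_2}=0$ and $\psi_{1,zz}|_{S_2}=0$ from (\ref{1.20})) and in $r$ (using the Liu--Wang expansions (\ref{1.21}) at the axis and $\psi_1|_{r=R}=0$), choreographed exactly as in the derivation of $I$ in Lemma~\ref{l3.1}, so that one derivative is shifted off $\psi_1$ and an explicit $v_\varphi$ factor appears in front. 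The resulting singular weight $1/r$ is split as $r^{-1}=r^{-1+\varepsilon_0}\cdot r^{-\varepsilon_0}$: the Hardy interpolation (Lemma~\ref{l2.3}) absorbs $r^{-1+\varepsilon_0}$ with constant $\sim 1/\varepsilon_0$ into $\psi_1$-norms, and the residual $r^{\varepsilon_0}v_\varphi$ is estimated by $R^{\varepsilon_0}|v_\varphi|_{\infty,\Omega^t}^{\varepsilon_0}$; doing this once produces the linear-in-$\varepsilon_0$ prefactor $R^{\varepsilon_0}/\varepsilon_0\cdot|v_\varphi|_\infty^{\varepsilon_0}$, and on the terms requiring a second integration by parts one gets the quadratic version $R^{2\varepsilon_0}/\varepsilon_0^2\cdot|v_\varphi|_\infty^{2\varepsilon_0}$. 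The remaining $\psi_1$-norms are collapsed to $|\nabla\Gamma|_{2,\Omega^t}$ via the $H^3$ estimate (\ref{4.8}) of Lemma~\ref{l4.2}, while the $u$-derivatives are bounded by $D_4,D_5$. The forcing terms $F_r,F_z$ yield, by H\"older and Sobolev embedding, the $\frac{1}{\nu}(|F_r|_{6/5,2,\Omega^t}^2+|F_z|_{6/5,2,\Omega^t}^2)$ piece of $D_6^2$, and the initial data give $|\omega_r(0)|_{2,\Omega}^2+|\omega_z(0)|_{2,\Omega}^2$.

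The main obstacle I anticipate is two-fold. First, the bookkeeping of integrations by parts must close cleanly: every boundary term at $r=0$ needs the Liu--Wang expansions (\ref{1.21}) to vanish, and on $S_1$ the only surviving flux of $\omega_z$ must be the one matched by the projected swirl equation above. Second, and more delicately, the estimate (\ref{6.1}) has $|\nabla\Gamma|_{2,\Omega^t}$ to the \emph{first} power on the right, which is the \emph{order-reduction} feature that makes this lemma the crucial input for the bound (\ref{1.42}) on $\|\Phi\|_{L_2(\Omega^t)}^2$. This linearity is not accidental: it forces me to keep the full $\nabla\Gamma$-factor outside the Young inequality and absorb only the $\|\omega_r\|_V, \|\omega_z\|_V$ factors on the other side of each stretching term, rather than splitting symmetrically. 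Only if the $v_\varphi$ prefactor is genuinely extracted (rather than left as part of a $\nabla v$ quadratic) does this asymmetric Young inequality deliver exactly the form (\ref{6.1}) with the stated $\varepsilon_0$-dependence.
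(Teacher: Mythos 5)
Your proposal follows essentially the same route as the paper: the same joint energy identity for $\omega_r,\omega_z$, the same disposal of the $S_1$ boundary flux by projecting the angular-momentum equation onto $S_1$ (the paper projects $(\ref{1.7})_2$ rather than $(\ref{1.9})_1$, which is equivalent; note the $\nu$'s actually cancel, so no $1/\nu$ prefactor survives there), and the same treatment of the stretching term via the stream-function representation, Liu--Wang expansions, the $r^{-\varepsilon_0}$ splitting with Hardy, and the $H^3$ estimates of Lemma~\ref{l4.2}. The only cosmetic difference is that in the paper the quadratic prefactor $R^{2\varepsilon_0}/\varepsilon_0^2\,|v_\varphi|_{\infty}^{2\varepsilon_0}$ arises from applying Young's inequality to the two leftover terms $\int\psi_{1,z}r^{-2}u_{,zz}u$ and $\int\psi_{1,z}r^{-1}(r^{-1}u_{,r})_{,r}u$ (which squares the Hardy constant) rather than from a second integration by parts, but this does not change the argument.
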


\begin{proof}
  Multiplying $(\ref{1.8})_1$ by $\omega_r$, $(\ref{1.8})_3$ by $\omega_z$, adding the resulting equations and integrating over $\Omega^t$, we obtain
  \begin{equation}\eqal{
    &{1\over 2}(|\omega_r(t)|_{2,\Omega}^2+|\omega_z(t)|_{2,\Omega}^2)+\nu\bigg(|\nabla\omega_r|_{2,\Omega^t}^2+|\nabla\omega_z|_{2,\Omega^t}^2+\bigg|{\omega_r\over r}\bigg|_{2,\Omega^t}^2\bigg)\cr
    &=\nu\intop_{S^t}(\bar n\cdot\nabla\omega_z\omega_z+\bar n\cdot\nabla\omega_r\omega_r)dSdt'\cr
    &\quad+\intop_{\Omega^t}(v_{r,r}\omega_r^2+v_{z,z}\omega_z^2+(v_{r,z}+v_{z,r})\omega_r\omega_z)dxdt'\cr
    &\quad+\intop_{\Omega^t}(F_r\omega_r+F_z\omega_zdxdt'+{1\over 2}(|\omega_r(0)|_{2,\Omega}^2+|\omega_z(0)|_{2,\Omega}^2)\cr&
    \equiv I_1+J+I_2+{1\over 2}(|\omega_r(0)|_{2,\Omega}^2+|\omega_z(0)|_{2,\Omega}^2).\cr}
    \label{6.3}
  \end{equation}
  First we examine $I_1$. Since $\omega_r=-v_{\varphi,z},v_\varphi|_{r=R}=0$ and $v_{\varphi,z}|_{S_2}=0$ we obtain
  $$
    \intop_S\bar n\cdot\nabla\omega_r\omega_rdS=0.
  $$
  Using $(\ref{1.5})_3$ we get $\omega_z=v_{\varphi,r}+{v_\varphi\over r}$. Since $v_{\varphi,z}|_{S_2}=0$ we have
  $$\eqal{
      &-\nu\intop_{S^t}\bar n\cdot\nabla\omega_z\omega_zdS_1dt'=-\nu\intop_{S_1^t}\bar n\cdot\nabla\omega_z\omega_zdS_1dt'\cr
      &=-\nu\intop_{S_1^t}\partial_r\bigg(v_{\varphi,r}+{v_\varphi\over r}\bigg)\bigg(v_{\varphi,r}+{v_\varphi\over r}\bigg)Rdzdt'\cr
      &=-\nu\intop_0^t\intop_{-a}^a\bigg(v_{\varphi,rr}+{v_{\varphi,r}\over r}\bigg)v_{\varphi,r}\bigg|_{r=R}Rdzdt'\equiv I_1^1,\cr}
  $$
  where we used that $v_\varphi|_{S_1}=0$. Projecting $(\ref{1.7})_2$ on $S_1$ yields
  $$
    -\nu\bigg(v_{\varphi,rr}+{1\over r}v_{\varphi,r}\bigg)=f_\varphi.
  $$
  Hence
  $$
    I_1^1=R\intop_0^t\intop_{-a}^af_\varphi v_{\varphi,r}|_{r=R}dzdt'=\intop_0^t\intop_{-a}^af_\varphi\bigg(u_{,r}-{1\over R}u\bigg)dzdt'
  $$
  and
  $$
    |I_1^1|\le|f_\varphi|_{2,S_1^t}(|u_{,r}|_{2,S_1^t}+|u|_{2,S_1^t})\le c|f_\varphi|_{2,S_1^t}(D_4+D_5).
  $$
  Summarizing,
  \begin{equation}
    I_1\le c|f_\varphi|_{2,S_1^t}(D_4+D_5).
    \label{6.4}
  \end{equation}
  Next, we examine $I_2$. By the H\"older inequality, we get
  \begin{equation}\eqal{
    I_2&\le\varepsilon(|\omega_r|_{6,2,\Omega^t}^2+|\omega_z|_{6,2,\Omega^t}^2)+{1\over 4\varepsilon}(|F_r|_{6/5,2,\Omega^t}^2+|F_z|_{6/5,2,\Omega^t}^2).\cr}
    \label{6.5}
  \end{equation}
  Finally, we examine
  \begin{equation}
    J=\intop_{\Omega^t}[v_{r,r}\omega_r^2+v_{z,z}\omega_z^2+(v_{r,z}+v_{z,r})\omega_r\omega_z]dxdt'.
    \label{6.6}
  \end{equation}
  Using (\ref{1.5}) and (\ref{1.16}) yields
  \begin{equation}\eqal{
      J&=\intop_{\Omega^t}\bigg[-\psi_{,zr}\bigg({1\over r}u_{,z}\bigg)^2+\bigg(\psi_{,rz}+{\psi_{,z}\over r}\bigg)\bigg({1\over r}u_{,r}\bigg)^2\cr
        &\quad-\bigg(-\psi_{,zz}+\psi_{,rr}+{1\over r}\psi_{,r}-{\psi\over r^2}\bigg)\bigg({1\over r}u_{,z}\bigg)\bigg({1\over r}u_{,r}\bigg)\bigg]dxdt'\equiv J_1+J_2+J_3.\cr}
    \label{6.7}
  \end{equation}
  Consider $J_1$. Integrating by parts with respect to $z$ and using that $u_{,z}|_{S_2}=0$, we obtain
  $$\eqal{
      J_1&=-\intop_{\Omega^t}\psi_{,zr}{1\over r}u_{,z}{1\over r}u_{,z}dxdt'=\intop_{\Omega^t}\psi_{,zzr}{1\over r^2}u_{,z}udxdt'\cr
      &\quad+\intop_{\Omega^t}\psi_{,zr}{1\over r^2}u_{,zz}udxdt'\equiv J_{11}+J_{12}.\cr}
  $$
  Using the transformation $\psi=r\psi_1$, we have
  $$
    J_{11}=\intop_{\Omega^t}\bigg({\psi_{1,zz}\over r}+\psi_{1,zzr}\bigg){u_{,z}\over r}udxdt'\equiv J_{11}^1+J_{11}^2.
  $$
  By the H\"older inequality,
  $$\eqal{
    |J_{11}^1|&\le\intop_{\Omega^t}\bigg|{\psi_{1,zz}\over r^{1-\varepsilon_0}}\bigg|\,\bigg|{u_{,z}\over r}\bigg|\,|v_\varphi|^{\varepsilon_0}|u|^{1-\varepsilon_0}dxdt'\cr
    &\le D_2^{1-\varepsilon_0}|v_\varphi|_{\infty,\Omega^t}^{\varepsilon_0}\bigg|{u_{,z}\over r}\bigg|_{2,\Omega^t}\bigg|{\psi_{1,zz}\over r^{1-\varepsilon_0}}\bigg|_{2,\Omega^t},\cr}
  $$
  where (\ref{2.10}) is used. In view of (\ref{2.5})
  $$
    \bigg|{u_{,z}\over r}\bigg|_{2,\Omega^t}\le|v_{\varphi,z}|_{2,\Omega^t}\le D_1
  $$
  and (\ref{2.1}), (\ref{4.7}) imply
  $$
    \bigg|{\psi_{1,zz}\over r^{1-\varepsilon_0}}\bigg|\le c{R^{\varepsilon_0}\over\varepsilon_0}|\psi_{1,zzr}|_{2,\Omega^t}\le c{R^{\varepsilon_0}\over\varepsilon_0}|\Gamma_{,z}|_{2,\Omega^t}.
  $$
  Summarizing,
  $$
    |J_{11}^1|\le c{R^{\varepsilon_0}\over\varepsilon_0}D_1D_2^{1-\varepsilon_0}|v_\varphi|_{\infty,\Omega^t}^{\varepsilon_0} |\Gamma_{,z}|_{2,\Omega^t}.
  $$
  Next,
  $$
    |J_{11}^2|\le|u|_{\infty,\Omega^2}\bigg|{u_{,z}\over r}\bigg|_{2,\Omega^t}|\psi_{1,zzr}|_{2,\Omega^t}\le cD_2D_1|\Gamma_{,z}|_{2,\Omega^t},
  $$
  where (\ref{2.5}), (\ref{2.10}), (\ref{4.7}) were used. Hence,
  \begin{equation}
    |J_{11}|\le cD_1D_2^{1-\varepsilon_0}{R^{\varepsilon_0}\over\varepsilon_0} |v_\varphi|_{\infty,\Omega^t}^{\varepsilon_0}|\Gamma_{,z}|_{2,\Omega^t}+cD_1D_2|\Gamma_{,z}|_{2,\Omega^t}.
    \label{6.8}
  \end{equation}
  Next,
  $$
    J_{12}=\intop_{\Omega^t}\bigg({\psi_{1,z}\over r^2}+{\psi_{1,zr}\over r}\bigg)u_{,zz}udxdt\equiv J_{12}^1+J_{12}^2.
  $$
  Estimates (\ref{2.10}), (\ref{4.9}), (\ref{5.1}) imply
  $$
    |J_{12}^2|\le|u|_{\infty,\Omega^t}|u_{,zz}|_{2,\Omega^t}\bigg|{\psi_{1,zr}\over r}\bigg|_{2,\Omega^t}\le cD_2D_4|\Gamma_{,z}|_{2,\Omega^t}.
  $$
  Hence,
  \begin{equation}
    |J_{12}|\le\bigg|\intop_{\Omega^t}{\psi_{1,z}\over r^2}u_{,zz}udxdt'\bigg|+cD_2D_4|\Gamma_{,z}|_{2,\Omega^t}.
    \label{6.9}
  \end{equation}
  Definition of $J_1$ and (\ref{6.8}), (\ref{6.9}) imply
  \begin{equation}\eqal{
      |J_1|&\le c\bigg[{R^{\varepsilon_0}\over\varepsilon_o}D_1D_2^{1-\varepsilon_0}|v_\varphi|_{\infty,\Omega^t}^{\varepsilon_0} +D_1D_2+D_2D_4\bigg]|\Gamma_{,z}|_{2,\Omega^t}\cr
      &\quad+\bigg|\intop_{\Omega^t}{\psi_{1,z}\over r^2}u_{,zz}udxdt\bigg|.\cr}
    \label{6.10}
  \end{equation}
  Next, we estimate $J_2$. We can write it in the form
  $$
    J_2=\intop_{\Omega^t}\bigg(\psi_{,rz}+{\psi_{,z}\over r}\bigg){1\over r}u_{,r}u_{,r}drdzdt'.
  $$
  Integrating by parts with respect to $r$ yields
  $$\eqal{
      J_2&=\intop_0^t\intop_{-a}^a\bigg(\psi_{,rz}+{\psi_{,z}\over r}\bigg){1\over r}u_{,r}u\bigg|_{r=0}^{r=R}dzdt'-\intop_{\Omega^t}\bigg(\psi_{,rz}+{\psi_{,z}\over r}\bigg)_{,r}{1\over r}u_{,r}udrdzdt'\cr
      &\quad-\intop_{\Omega^t}\bigg(\psi_{,rz}+{\psi_{,z}\over r}\bigg)\,\bigg({1\over r}u_{,r}\bigg)_{,r}{u\over r}dxdt'\equiv J_{20}+J_{21}+J_{22},\cr}
  $$
  where the boundary term vanishes because $u|_{r=R}=0$ and (\ref{1.21}) implies
  $$
    \bigg(\psi_{,rz}+{\psi_{,z}\over r}\bigg){1\over r}u_{,r}u\bigg|_{r=0}=2d_{1,z}\bigg(v_{\varphi,r}+{v_\varphi\over r}\bigg)rv_\varphi\bigg|_{r=0}=4d_{1,z}b_1r^2b_1|_{r=0}=0.
  $$
  Using the transformation $\psi=r\psi_1$ in $J_{21}$ yields
  $$\eqal{
      J_{21}&=-\intop_{\Omega^t}(2\psi_{1,z}+r\psi_{1,rz})_{,r}{1\over r}{1\over r}u_{,r}udxdt'\cr
      &=-\intop_{\Omega^t}(3\psi_{1,rz}+r\psi_{1,rrz}){1\over r}{1\over r}u_{,r}udxdt'.\cr}
  $$
  By the H\"older inequality, we have
  $$\eqal{
    |J_{21}|&\le 3\bigg|{\psi_{1,rz}\over r}\bigg|_{2,\Omega^t}\bigg|{1\over r}u_{,r}\bigg|_{2,\Omega^t}|u|_{\infty,\Omega^t}+|\psi_{1,rrz}|_{2,\Omega^t}\bigg|{1\over r}u_{1,r}\bigg|_{2,\Omega^t}|u|_{\infty,\Omega^t}\cr
    &\le cD_1D_2|\Gamma_{,z}|_{2,\Omega^t},\cr}
  $$
  where (\ref{2.5}), (\ref{2.10}), (\ref{4.8}) and (\ref{4.9}) were used. Next, we consider $J_{22}$. Passing to function $\psi_1$, we get
  $$
    J_{22}=-\intop_{\Omega^t}(2\psi_{1,z}+r\psi_{1,rz}){1\over r}\bigg({1\over r}u_{,r}\bigg)_{,r}udxdt'.
  $$
  Hence
  $$\eqal{
      |J_{22}|&\le 2\bigg|\intop_{\Omega^t}{\psi_{1,z}\over r}\bigg({1\over r}u_{,r}\bigg)_{,r}udxdt'\bigg|+\bigg|\intop_{\Omega^t}\psi_{1,rz}\bigg({1\over r}u_{,r}\bigg)_{,r}udxdt'\bigg|\equiv K_1+K_2,\cr}
  $$
  where $K_2$ is bounded by
  $$
    K_2\le\bigg|\intop_{\Omega^t}{\psi_{1,rz}\over r}u_{,rr}udxdt'\bigg|+\bigg|\intop_{\Omega^t}{\psi_{1,rz}\over r}{u_{,r}\over r}udxdt'\bigg|\equiv K_2^1+K_2^2.
  $$
  Using (\ref{2.5}), (\ref{2.10}), (\ref{5.2}) and (\ref{4.9}), we obtain
  $$
    |K_2^1|\le\bigg|{\psi_{1,rz}\over r}\bigg|_{2,\Omega^t}|u_{,rr}|_{2,\Omega^t}|u|_{\infty,\Omega^t}\le cD_2D_5|\Gamma_{,z}|_{2,\Omega^t}
  $$
  and
  $$
    |K_2^2|\le\bigg|{\psi_{1,rz}\over r}\bigg|_{2,\Omega^t}\bigg|{1\over r}u_{,r}\bigg|_{2,\Omega^t}|u|_{\infty,\Omega^t}\le cD_1D_2|\Gamma_{,z}|_{2,\Omega^t}.
  $$
  Summarizing,
  \begin{equation}\eqal{
      |J_2|&\le c[D_1D_2+D_2D_5]|\Gamma_{,z}|_{2,\Omega^t}+2\bigg|\intop_{\Omega^t}{\psi_{1,z}\over r}\bigg({1\over r}u_{,r}\bigg)_{,r}udxdt'\bigg|.\cr}
    \label{6.11}
  \end{equation}
  Finally, we examine $J_3$. Using that $\psi=r\psi_1$ yields
  $$
    J_3=-\intop_{\Omega^t}(-r\psi_{1,zz}+3\psi_{1,r}+r\psi_{1,rr}){1\over r}u_{,r}{1\over r}u_{,z}dxdt'.
  $$
  Integrating by parts with respect to $z$, using that $\psi_1|_{S_2}=0$ and \break$\psi_{1,zz}|_{S_2}=-\Gamma|_{S_2}=0$, we obtain
  $$\eqal{
      J_3&=\intop_{\Omega^t}\bigg(-\psi_{1,zzz}+{3\over r}\psi_{1,rz}+\psi_{1,rrz}\bigg){1\over r}u_{,r}udxdt'\cr
      &\quad+\intop_{\Omega^t}\bigg(-\psi_{1,zz}+{3\over r}\psi_{1,r}+\psi_{1,rr}\bigg)\,\bigg({1\over r}u_{,r}\bigg)_{,z}udxdt'\cr
      &\equiv J_{31}+J_{32}.\cr}
  $$
  Using (\ref{2.5}), (\ref{2.10}), (\ref{4.8}) and (\ref{4.9}), we have
  $$
    |J_{31}|\le cD_1D_2|\Gamma_{,z}|_{2,\Omega^t}.
  $$
  To estimate $J_{32}$ we recall that
  $$
    -\psi_{1,rr}-{3\over r}\psi_{1,r}-\psi_{1,zz}=\Gamma.
  $$
  Then $J_{32}$ takes the form
  $$
    J_{32}=-\intop_{\Omega^t}(2\psi_{1,zz}+\Gamma)\bigg({1\over r}u_{,r}\bigg)_{,z}udxdt'\equiv J_{32}^1+J_{32}^2.
  $$
  Continuing,
  $$\eqal{
    |J_{32}^1|&\le c\bigg|\intop_{\Omega^t}{\psi_{1,zz}\over r^{1-\varepsilon_0}}u_{,rz}u^{1-\varepsilon_0}v_\varphi^{\varepsilon_0}dxdt\bigg|\le cD_2^{1-\varepsilon_0}|v_\varphi|_{\infty,\Omega}^{\varepsilon_0}\bigg|{\psi_{1,zz}\over r^{1-\varepsilon_0}}\bigg|_{2,\Omega^t}\cdot|u_{,rz}|_{2,\Omega^t},\cr}
  $$
  where we used (\ref{2.10}). Finally, we have
  $$
    |J_{32}^1|\le cD_2^{1-\varepsilon_0}D_4{R^{\varepsilon_0}\over\varepsilon_0}|v_\varphi|_{\infty,\Omega^t}^{\varepsilon_0} |\Gamma_{,z}|_{2,\Omega^t}.
  $$
  Next,
  $$\eqal{
    |J_{32}^2|&\le\intop_{\Omega^t}\bigg|{\Gamma\over r^{1-\varepsilon_0}}\bigg||u_{,rz}|\,|u|^{1-\varepsilon_0}|v_\varphi|^{\varepsilon_0}dxdt'\le cD_2^{1-\varepsilon_0}D_4{R^{\varepsilon_0}\over\varepsilon_0}|v_\varphi|_{\infty,\Omega^t}^{\varepsilon_0} |\Gamma_{,r}|_{2,\Omega^t}.\cr}
  $$
  Summarizing,
  \begin{equation}
    |J_3|\le cD_1D_2|\Gamma_{,z}|_{2,\Omega^t}+cD_2^{1-\varepsilon_0}D_4{R^{\varepsilon_0}\over\varepsilon_0}|v_\varphi|_{\infty,\Omega^t}^{\varepsilon_0} |\nabla\Gamma|_{2,\Omega^t}.
    \label{6.12}
  \end{equation}
  Using estimates (\ref{6.10}), (\ref{6.11}), (\ref{6.12}) in (\ref{6.7}) implies
  \begin{equation}\eqal{
      |J|&\le\bigg|\intop_{\Omega^t}{\psi_{1,z}\over r^2}u_{,zz}udxdt\bigg|+2\bigg|\intop_{\Omega^t}{\psi_{1,z}\over r}\bigg({1\over r}u_{,r}\bigg)_{,r}udxdt\bigg|\cr
      &\quad+c\bigg[{R^{\varepsilon_0}\over\varepsilon_0}(D_1+D_4)D_2^{1-\varepsilon_0} |v_\varphi|_{\infty,\Omega^t}^{\varepsilon_0}\cr
        &\quad+D_1D_2+D_2D_4+D_2D_5\bigg]\,|\nabla\Gamma|_{2,\Omega^t}.\cr}
    \label{6.13}
  \end{equation}
  Using estimates (\ref{6.4}), (\ref{6.5}), (\ref{6.13}) in (\ref{6.3}) implies the inequality
  \begin{equation}\eqal{
      &|\omega_r(t)|_{2,\Omega}^2+|\omega_z(t)|_{2,\Omega}^2+\nu(|\nabla\omega_r|_{2,\Omega^t}^2+ |\nabla\omega_z|_{2,\Omega^t}^2+|\Phi|_{2,\Omega^t}^2)\cr
      &\le\bigg|\intop_{\Omega^t}{\psi_{1,z}\over r^2}u_{,zz}udxdt'\bigg|+2\bigg|\intop_{\Omega^t}{\psi_{1,z}\over r}\bigg({1\over r}u_{,r}\bigg)_{,r}udxdt'\bigg|\cr
      &\quad+c\bigg[{R^{\varepsilon_0}\over\varepsilon_0}(D_1+D_4)D_2^{1-\varepsilon_0} |v_\varphi|_{\infty,\Omega^t}^{\varepsilon_0}+D_1D_2+D_2(D_4+D_5)\bigg]\cdot\cr
      &\quad\cdot|\nabla\Gamma|_{2,\Omega^t}+c|f_\varphi|_{2,S_1^t}(D_4+D_5)\cr
      &\quad+c(|F_r|_{6/5,2,\Omega^t}^2+|F_z|_{6/5,2,\Omega^t}^2)+|\omega_r(0)|_{2,\Omega}^2+ |\omega_z(0)|_{2,\Omega}^2.\cr}
    \label{6.14}
  \end{equation}
  Recalling that $\omega_r=-{1\over r}u_{,z}$, $\omega_z={1\over r}u_{,r}$ (see (\ref{1.5})) and applying the H\"older and Young inequalities to the first term on the r.h.s. of (\ref{6.14}) we bound it by
  $$
    \varepsilon_1|\omega_{r,z}|_{2,\Omega^t}^2+{1\over 4\varepsilon_1}\intop_{\Omega^t}{\psi_{1,z}^2\over r^2}u^2dxdt'\equiv L_1.
  $$
  The second term in $L_1$ can be written in the form
  $$
    \intop_{\Omega^t}{\psi_{1,z}^2\over r^{2(1-\varepsilon_0)}}{u^2\over r^{2\varepsilon_0}}dxdt'\le D_2^{2(1-\varepsilon_0)}|v_\varphi|_{\infty,\Omega^t}^{2\varepsilon_0}\intop_{\Omega^t} {\psi_{1,z}^2\over r^{2(1-\varepsilon_0)}}dxdt',
  $$
  where $\varepsilon_0$ can be chosen as small as we need. By the Hardy inequality
  $$
    \intop_{\Omega^t}{\psi_{1,z}^2\over r^{2(1-\varepsilon_0)}}dxdt'\le{R^{2\varepsilon_0}\over\varepsilon_0^2}\intop_{\Omega^t}\psi_{1,rz}^2dxdt'.
  $$
  Applying the interpolation inequality (\ref{2.3}) (see [Ch. 3, sect. 15]\cite{BIN})
  $$
    \intop_\Omega\psi_{1,rz}^2dx\le\bigg(\intop_\Omega|\nabla^2\psi_{1,z}|^2dx\bigg)^\theta \bigg(\intop_\Omega\psi_{1,z}^2dx\bigg)^{1-\theta},
  $$
  where $\theta$ satisfies the equality
  $$
    {3\over 2}-1=(1-\theta){3\over 2}+\theta\bigg({3\over 2}-2\bigg)\quad {\rm so}\ \ \theta=1/2.
  $$
  Using (\ref{2.12}), we get
  $$
    \intop_{\Omega^t}\psi_{1,zr}^2dx\le c|\nabla^2\psi_{1,z}|_{2,\Omega}|\psi_{1,z}|_{2,\Omega}\le cD_1|\nabla^2\psi_{1,z}|_{2,\Omega}.
  $$
  Summarizing,
  \begin{equation}\eqal{
      &\bigg|\intop_{\Omega^t}{\psi_{1,z}\over r^2}u_{,zz}udxdt\bigg|\le\varepsilon_1|\omega_{r,z}|_{2,\Omega^t}^2\cr
      &\quad+{c\over 4\varepsilon_1}D_1D_2^{2(1-\varepsilon_0)}{R^{2\varepsilon_0}\over\varepsilon_0^2} |v_\varphi|_{\infty,\Omega^t}^{2\varepsilon_0}|\Gamma_{,z}|_{2,\Omega^t}.\cr}
    \label{6.15}
  \end{equation}
  Similarly, the second term on the r.h.s. of (\ref{6.14}) is bounded by
  \begin{equation}\eqal{
      &\bigg|\intop_{\Omega^t}{\psi_{1,z}\over r}\bigg({1\over r}u_{,r}\bigg)_{,r}udxdt'\bigg|\le\varepsilon_2|\omega_{z,r}|_{2,\Omega^t}^2\cr
      &\quad+{c\over 4\varepsilon_2}D_1D_2^{2(1-\varepsilon_0)}{R^{2\varepsilon_0}\over\varepsilon_0^2} |v_\varphi|_{\infty,\Omega^t}^{2\varepsilon_0}|\Gamma_{,z}|_{2,\Omega^t}.\cr}
    \label{6.16}
  \end{equation}
  Using (\ref{6.15}) and (\ref{6.16}) in (\ref{6.14}) yields (\ref{6.1}).
\end{proof}

\begin{lemma}\label{l6.2}
  Assume that $D_1$, $D_2$ are defined in Notation \ref{n1.1}, $v_\varphi(0)\in L_\infty(\Omega)$, $f_\varphi/r\in L_1(0,t;L_\infty(\Omega))$. Then
  \begin{equation}
    |v_\varphi(t)|_{\infty,\Omega}\le{D_2\over\sqrt{\nu}}D_1^{1/4}X^{3/4}+D_7,
    \label{6.17}
  \end{equation}
  where
  \begin{equation}
    D_7=\sqrt{2}D_2^{1/2}\bigg|{f_\varphi\over r}\bigg|_{\infty,1,\Omega^t}^{1/2}+|v_\varphi(0)|_{\infty,\Omega}.
    \label{6.18}
  \end{equation}
\end{lemma}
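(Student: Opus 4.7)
The plan is a Moser-type $L^{2s}$ energy estimate on $v_\varphi$ followed by $s\to\infty$, with the sign-indefinite drift term handled via the identity $v_r/r=-\psi_{1,z}$ from (\ref{1.19}) together with integration by parts in $z$. I would multiply $(\ref{1.7})_2$ by $v_\varphi|v_\varphi|^{2s-2}$ and integrate over $\Omega$. The convective term $\intop_\Omega(v\cdot\nabla v_\varphi)v_\varphi|v_\varphi|^{2s-2}\,dx$ vanishes by the chain rule together with $\divv v=0$ and $v\cdot\bar n|_S=0$, while the Laplacian boundary terms vanish thanks to $v_\varphi|_{S_1}=0$ and $v_{\varphi,z}|_{S_2}=0$ (cf.\ (\ref{1.2})). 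The resulting identity is
$$
\frac{1}{2s}\frac{d}{dt}|v_\varphi|_{2s,\Omega}^{2s}+\frac{\nu(2s-1)}{s^2}\intop_\Omega|\nabla(|v_\varphi|^s)|^2\,dx+\nu\intop_\Omega\frac{|v_\varphi|^{2s}}{r^2}\,dx+\intop_\Omega\frac{v_r}{r}|v_\varphi|^{2s}\,dx=\intop_\Omega f_\varphi v_\varphi|v_\varphi|^{2s-2}\,dx.
$$

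The key transformation is to use $v_r/r=-\psi_{1,z}$, integrate by parts in $z$ exploiting $\psi_1|_{S_2}=0$, and use $v_{\varphi,z}=-\omega_r=-r\Phi$ to obtain
$$
\intop_\Omega\frac{v_r}{r}|v_\varphi|^{2s}\,dx=2s\intop_\Omega\psi_1|v_\varphi|^{2s-2}v_\varphi v_{\varphi,z}\,dx=-2s\intop_\Omega r\psi_1\Phi|v_\varphi|^{2s-2}v_\varphi\,dx.
$$
For the forcing, I factor $v_\varphi=u/r$, writing $\intop_\Omega f_\varphi v_\varphi|v_\varphi|^{2s-2}\,dx=\intop_\Omega(f_\varphi/r)u|v_\varphi|^{2s-2}\,dx$ and using $|u|_\infty\le D_2$ from Lemma~\ref{l2.5}. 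In the limit $s\to\infty$ and after integration in time, this produces the $2D_2|f_\varphi/r|_{\infty,1,\Omega^t}$ part of $D_7^2$, while the initial datum yields $|v_\varphi(0)|_\infty^2$.

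The main obstacle is to bound the rewritten drift term so as to produce precisely the combination $D_1^{1/4}X^{3/4}/\sqrt{\nu}$. The $1{:}3$ exponent split suggests a three-dimensional Gagliardo-Nirenberg interpolation of type $\|f\|_\infty\le c\|f\|_{L^2}^{1/4}\|f\|_{H^2}^{3/4}$. The strategy is: bound $|\psi_1|_\infty\le c\|\psi_1\|_{H^2(\Omega)}\le c|\Gamma|_{2,\Omega}$ using Lemma~\ref{l4.1} combined with the Sobolev embedding $H^2(\Omega)\hookrightarrow L^\infty(\Omega)$, pair it with the $L^\infty_tL^2_x$ control on $\Phi$ built into $X$, invoke the $L^2_{t,x}$ bound on $\nabla v_\varphi$ (supplying the $D_1$) from Lemma~\ref{l2.4}, apply H\"older in $\Omega$ and then in $t$, and use Young's inequality to absorb a fraction of the dissipation $\nu|\nabla(|v_\varphi|^s)|_{2,\Omega}^2$ into the left-hand side; this is what generates the $1/\sqrt{\nu}$ prefactor.

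Passing $s\to\infty$ and integrating in time then yields
$$
|v_\varphi(t)|_\infty^2\le\frac{cD_2^2}{\nu}D_1^{1/2}X^{3/2}+2D_2|f_\varphi/r|_{\infty,1,\Omega^t}+|v_\varphi(0)|_\infty^2,
$$
and (\ref{6.17}) with $D_7$ as in (\ref{6.18}) follows by taking square roots and using $\sqrt{a+b+c}\le\sqrt{a}+\sqrt{b}+\sqrt{c}$. The delicate point is the fine balancing of the Gagliardo-Nirenberg exponent with the H\"older split in time so that the final bound lands exactly at $D_1^{1/4}X^{3/4}$, rather than at some neighbouring combination; this dictates how the dissipation must be distributed between absorption and the final estimate.
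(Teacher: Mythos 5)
Your setup (testing with $v_\varphi|v_\varphi|^{2s-2}$, handling the convection and boundary terms, the treatment of the forcing via $u/r$ and the swirl maximum principle) matches the paper. The genuine gap is in your treatment of the drift term. After you integrate by parts in $z$ you obtain a term carrying an explicit factor $2s$, equivalently $2\intop_\Omega\psi_1|v_\varphi|^s\partial_z(|v_\varphi|^s)\,dx$, and you propose to absorb part of it into the dissipation $\frac{\nu(2s-1)}{s^2}\intop_\Omega|\nabla(|v_\varphi|^s)|^2dx$. But that coefficient behaves like $2\nu/s$, so Young's inequality leaves you with a complementary term of size $O(s/\nu)\,|\psi_1|_{\infty,\Omega}^2|v_\varphi|_{2s,\Omega}^{2s}$; after dividing by $|v_\varphi|_{2s,\Omega}^{2s-2}$ the resulting differential inequality for $|v_\varphi|_{2s,\Omega}^2$ has a Gronwall factor that grows linearly in $s$, and the passage $s\to\infty$ fails. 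This is not a matter of "fine balancing" of exponents: the scheme degenerates before you reach the interpolation step. (Your proposed interpolation chain, built on $|\psi_1|_{\infty,\Omega}\lesssim|\Gamma|_{2,\Omega}$ and the $L_\infty(0,t;L_2)$ norm of $\Phi$, also would not naturally produce the split $D_1^{1/4}X^{3/4}$, but that is secondary.)

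The paper closes the estimate by \emph{not} integrating by parts. It keeps the drift term as $\intop_\Omega\psi_{1,z}|v_\varphi|^s dx$, writes one factor as $|v_\varphi|=|u|/r\le D_2/r$, and applies Young so that the quadratic piece is absorbed into $\nu\intop_\Omega|v_\varphi|^s/r^2\,dx$ --- a zero-order weighted term on the left-hand side whose coefficient $\nu$ is independent of $s$. This yields ${d\over dt}|v_\varphi|_{s,\Omega}^2\le{D_2^2\over\nu}|\psi_{1,z}|_{s,\Omega}^2+2D_2|f_\varphi/r|_{s/2,\Omega}$, which survives $s\to\infty$. The exponents $1/4$ and $3/4$ then come from the interpolation $|\psi_{1,z}|_{\infty,\Omega}\le|\psi_{1,z}|_{2,\Omega}^{1/4}|D^2\psi_{1,z}|_{2,\Omega}^{3/4}$ combined with $|\psi_{1,z}|_{2,\Omega^t}\le cD_1$ from (\ref{2.12}) and the $H^3$ elliptic bound $|D^2\psi_{1,z}|_{2,\Omega}\le c|\Gamma_{,z}|_{2,\Omega}$ from (\ref{4.8}), followed by H\"older in time. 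If you want to repair your argument, the essential fix is to absorb the drift into the $\nu\intop_\Omega|v_\varphi|^s/r^2\,dx$ term rather than into the gradient dissipation.
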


\begin{proof}
  Multiplying $(\ref{1.7})_2$ by $v_\varphi|v_\varphi|^{s-2}$ and integrating over $\Omega$ yields
  \begin{equation}\eqal{
    &{1\over s}{d\over dt}|v_\varphi|_{s,\Omega}^s+{4\nu(s-1)\over s^2}|\nabla|v_\varphi|^{s/2}|_{2,\Omega}^2+\nu\intop_\Omega{|v_\varphi|^s\over r^2}dx\cr
    &=\intop_\Omega\psi_{1,z}|v_\varphi|^sdx+\intop_\Omega f_\varphi v_\varphi^{s-1}dx.\cr}
    \label{6.19}
  \end{equation}
  The first term on the r.h.s. of (\ref{6.19}) is estimated by
  $$
    \varepsilon\intop_\Omega{|v_\varphi|^s\over r^2}dx+{D_2^2\over 4\varepsilon}\intop_\Omega|\psi_{1,z}|^2|v_\varphi|^{s-2}dx.
  $$
  The second integral on the r.h.s. of \eqref{6.19} is estimated by
  $$\eqal{
    &\intop_\Omega|f_\varphi|\,|v_\varphi|^{s-1}dx=\intop_\Omega\bigg|{f_\varphi\over r}\bigg|r|v_\varphi|^{s-1}dx\cr
    &\le D_2\intop_\Omega\bigg|{f_\varphi\over r}\bigg|\,|v_\varphi|^{s-2}dx\le D_2\bigg|{f_\varphi\over r}\bigg|_{s/2,\Omega}|v_\varphi|_{s,\Omega}^{s-2}.\cr}
  $$
  In view of the above estimates inequality (\ref{6.19}) reads
  $$
    {1\over s}{d\over dt}|v_\varphi|_{s,\Omega}^s\le{D_2^2\over 2\nu}|\psi_{1,z}|_{s,\Omega}^2|v_\varphi|_{s,\Omega}^{s-2}+D_2\bigg|{f_\varphi\over r}\bigg|_{s/2,\Omega}|v_\varphi|_{s,\Omega}^{s-2}.
  $$
  Simplifying, we get
  $$
    {d\over dt}|v_\varphi|_{s,\Omega}^2\le{D_2^2\over\nu}|\psi_{1,z}|_{s,\Omega}^2+2D_2\bigg|{f_\varphi\over r}\bigg|_{s/2,\Omega}.
  $$
  Integrating the inequality with respect to time and passing with $s$ to $\infty$ we get
  $$
    |v_\varphi|_{\infty,\Omega}^2\le{D_2^2\over\nu}\intop_0^t|\psi_{1,z}|_{\infty,\Omega}^2dt'+2D_2\bigg|{f_\varphi\over r}\bigg|_{\infty,1,\Omega^t}+|v_\varphi(0)|_{\infty,\Omega}^2.
  $$
  Using the interpolation
  $$
    |\psi_{1,z}|_{\infty,\Omega}\le|\psi_{1,z}|_{2,\Omega}^{1/4}|D^2\psi_{1,z}|_{2,\Omega}^{3/4}
  $$
  and (\ref{2.12}), (\ref{4.8}), we obtain
  $$
    |v_\varphi|_{\infty,\Omega}^2\le{D_2^2\over\nu}D_1^{1/2}|\Gamma_{,z}|_{2,\Omega^t}^{3/2}+\bigg|{f_\varphi\over r}\bigg|_{\infty,1,\Omega^t}+2D_2|v_\varphi(0)|_{\infty,\Omega}^2.
  $$
  The above inequality implies (\ref{6.17}) and concludes the proof.
\end{proof}

\begin{lemma}\label{l6.3}
  Assume that
  $$
    {|v_\varphi|_{s,\infty,\Omega^t}\over|v_\varphi|_{\infty,\Omega^t}}\ge c_0.
  $$
  Then
  \begin{equation}
    |v_\varphi(t)|_{s,\Omega}\le{1\over c_0^{s-1}}(D_1^2+|f_\varphi|_{s,1,\Omega^t})+|v_\varphi(0)|_{s,\Omega}.
    \label{6.20}
  \end{equation}
\end{lemma}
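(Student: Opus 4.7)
The plan is to mimic the $L^s$ energy argument used in Lemma~\ref{l2.5} and Lemma~\ref{l6.2}, but to handle both the Coriolis-type term $v_rv_\varphi/r$ and the forcing term through bounds that factor out an $|v_\varphi|_{\infty,\Omega^t}^{s-1}$, so that the ratio hypothesis $|v_\varphi|_{\infty,\Omega^t}\le c_0^{-1}|v_\varphi|_{s,\infty,\Omega^t}$ can be invoked to produce the factor $c_0^{-(s-1)}$.

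First I would multiply $(\ref{1.7})_2$ by $v_\varphi|v_\varphi|^{s-2}$ and integrate over $\Omega$. The convective term reorganises as ${1\over s}v\cdot\nabla|v_\varphi|^s$ and vanishes by $\divv v=0$ together with $v\cdot\bar n|_S=0$. Integrating by parts on the viscous contributions, using $v_\varphi|_{S_1}=0$ and $v_{\varphi,z}|_{S_2}=0$, produces the positive dissipation ${4\nu(s-1)\over s^2}|\nabla|v_\varphi|^{s/2}|_{2,\Omega}^2$ together with $\nu\intop_\Omega|v_\varphi|^s/r^2\,dx$, while leaving on the right the Coriolis integral $\intop_\Omega(v_r/r)|v_\varphi|^s\,dx=-\intop_\Omega\psi_{1,z}|v_\varphi|^s\,dx$ and the forcing $\intop_\Omega f_\varphi v_\varphi^{s-1}\,dx$; this is identity~(\ref{6.19}).

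Next I would discard the positive viscous terms and apply H\"older in such a way that an $L^\infty$ power of $v_\varphi$ is factored out:
$$\Bigl|\intop_\Omega{v_r\over r}|v_\varphi|^s\,dx\Bigr|\le|v_\varphi|_{\infty,\Omega}^{s-1}\Bigl|{v_r\over r}\Bigr|_{2,\Omega}|v_\varphi|_{2,\Omega},\qquad \Bigl|\intop_\Omega f_\varphi v_\varphi^{s-1}\,dx\Bigr|\le|f_\varphi|_{1,\Omega}|v_\varphi|_{\infty,\Omega}^{s-1}.$$
Replacing $|v_\varphi|_{\infty,\Omega}(t)\le|v_\varphi|_{\infty,\Omega^t}\le c_0^{-1}|v_\varphi|_{s,\infty,\Omega^t}$ and integrating in $t$, the Cauchy--Schwarz inequality together with the energy estimate (\ref{2.5}) bounds $\intop_0^t|v_r/r|_{2,\Omega}|v_\varphi|_{2,\Omega}\,dt'$ by a constant times $D_1^2$, and a second application of H\"older converts $|f_\varphi|_{1,\Omega}$ into $|\Omega|^{(s-1)/s}|f_\varphi|_{s,\Omega}$. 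Setting $M:=|v_\varphi|_{s,\infty,\Omega^t}$ this yields
$$|v_\varphi(t)|_{s,\Omega}^s\le|v_\varphi(0)|_{s,\Omega}^s+{C\over c_0^{s-1}}\,M^{s-1}\bigl(D_1^2+|f_\varphi|_{s,1,\Omega^t}\bigr).$$
Taking the supremum in $t$ on the left, dividing by $M^{s-1}$, and separately treating the trivial case $M\le|v_\varphi(0)|_{s,\Omega}$ produces~(\ref{6.20}).

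The main obstacle is that the Coriolis factor $v_r/r$ is controlled by the energy only in $L^2(\Omega^t)$, not pointwise in~$t$; this forces time integration before extracting $M$, and it is precisely why one must pull an $L^\infty$ power of $v_\varphi$ out through H\"older and then invoke the ratio hypothesis. A naive Young absorption (as in Lemma~\ref{l6.2}) would bypass the hypothesis and could not produce the $c_0^{-(s-1)}$ factor; the essential idea is that the hypothesis upgrades pointwise-in-$t$ $L^\infty$ control of $v_\varphi$ into global-in-$t$ $L^s$ control at the cost of the factor $c_0^{-(s-1)}$.
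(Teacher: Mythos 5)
Your proposal is correct and follows essentially the same route as the paper: the same $L^s$ energy identity (\ref{6.19})/(\ref{6.21}), the same H\"older factorization pulling $|v_\varphi|_{\infty,\Omega}^{s-1}$ out of both the $v_r/r$ term and the forcing term, and the same use of the ratio hypothesis together with the energy bound (\ref{2.5}) to produce $c_0^{-(s-1)}(D_1^2+|f_\varphi|_{s,1,\Omega^t})$. The only difference is where you divide by the $(s-1)$-power of the $L^s$ norm — you integrate in time first and divide by $M^{s-1}$ at the end (treating the case $M\le|v_\varphi(0)|_{s,\Omega}$ separately), whereas the paper divides pointwise in $t$ before integrating; your variant is, if anything, slightly cleaner since it uses the hypothesis exactly in the ratio-of-suprema form in which it is stated.
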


\begin{proof}
  Multiply $(\ref{1.7})_2$ by $v_\varphi|v_\varphi|^{s-2}$, integrate over $\Omega$ and use boundary conditions to obtain
  \begin{equation}\eqal{
    &{1\over s}{d\over dt}|v_\varphi|_{s,\Omega}^s+{4\nu(s-1)\over s^2}|\nabla v_\varphi^{s/2}|_{2,\Omega}^2+\nu\intop_\Omega{|v_\varphi|^s\over r^2}dx\cr
    &\le\intop_\Omega\bigg|{v_r\over r}\bigg||v_\varphi|^sdx+\intop_\Omega|f_\varphi|\,|v_\varphi|^{s-1}dx.\cr}
    \label{6.21}
  \end{equation}
  Dropping the last two terms on the l.h.s. we get
  \begin{equation}
    |v_\varphi|_{s,\Omega}^{s-1}{d\over dt}|v_\varphi|_{s,\Omega}\le|v_\varphi|_{\infty,\Omega}^{s-1}\bigg(\bigg|{v_r\over r}\bigg|_{2,\Omega}|v_\varphi|_{2,\Omega}+|f_\varphi|_{s,\Omega}\bigg).
    \label{6.22}
  \end{equation}
  Continuing,
  $$
    {d\over dt}|v_\varphi|_{s,\Omega}\le\bigg({|v_\varphi|_{\infty,\Omega}\over|v_\varphi|_{s,\Omega}}\bigg)^{s-1} \bigg(\bigg|{v_r\over r}\bigg|_{2,\Omega}|v_\varphi|_{2,\Omega}+|f_\varphi|_{s,\Omega}\bigg).
  $$
  Integrating with respect to time yields
  $$
    |v_\varphi(t)|_{s,\Omega}\le\bigg({|v_\varphi|_{\infty,\Omega^t}\over|v_\varphi|_{s,\infty,\Omega^t}}\bigg)^{s-1} \bigg(\bigg|{v_r\over r}\bigg|_{2,\Omega^t}|v_\varphi|_{2,\Omega^t}+|f_\varphi|_{s,1,\Omega^t}\bigg)+ |v_\varphi(0)|_{s,\Omega}.
  $$
  The above inequality implies (\ref{6.20}).
\end{proof}

\section{Conditional higher regularity estimate for regular solutions.}\label{s7}

Assuming the conditional bound for $X_s(t)$ obtained in Theorem \ref{t1.2}, we show that appropriate regularity of data implies
\begin{equation}
  \|v\|_{W_2^{4,2}(\Omega^t)}+\|\nabla p\|_{W_2^{2,1}(\Omega^t)}\le C,
  \label{7.1}
\end{equation}
where $C=\Psi_{s,A,c_0}\left(\mathrm{data}, \int_0^t E_{W,s}(\tau)\,d\tau\right)$.

\subsection{Preliminaries}\label{s7.1}

We first introduce some functional analytic tools to handle the anisotropic Sobolev spaces.

\begin{definition}[Anisotropic Sobolev and Sobolev-Slobodetskii spaces]\label{d7.1}
  We denote by
  \begin{itemize}
    \item[1.] $W_{p,p_0}^{k,k/2}(\Omega^t)$, $k,k/2\in\N\cup\{0\}$, $p,p_0\in[1,\infty]$ -- the anisotropic Sobolev space with a mixed norm, which is a completion of $C^\infty(\Omega^T)$-functions under the norm
          $$
            \|u\|_{W_{p,p_0}^{k,k/2}(\Omega^T)}=\bigg(\intop_0^T\bigg(\sum_{|\alpha|+2a\le k}\intop_\Omega|D_x^\alpha\partial_t^au|^p\bigg)^{p_0/p}dt\bigg)^{1/p_0}.
          $$
    \item[2.] $W_{p,p_0}^{s,s/2}(\Omega^T)$, $s\in\R_+$, $p,p_0\in[1,\infty)$ -- the Sobolev-Slobodetskii space with the finite norm
          $$\eqal{
            &\|u\|_{W_{p,p_0}^{s,s/2}(\Omega^T)}=\sum_{|\alpha|+2a\le|s|}\|D_x^\alpha\partial_t^au\|_{L_{p,p_0}(\Omega^T)}\cr
            &+\bigg[\intop_0^T\!\!\bigg(\intop_\Omega\intop_\Omega\!\sum_{|\alpha|+2a\le[s]}\!\! {|D_x^\alpha\partial_t^au(x,t)-D_{x'}^\alpha\partial_t^au(x',t)|^p\over|x-x'|^{n+p(s-[s])}}dxdx'\bigg)^{p_0/p}dt \bigg]^{1/p_0}\cr
            &+\bigg[\intop_\Omega\!\!\bigg(\intop_0^T\intop_0^T\!\sum_{|\alpha|+2a=[s]}\!\! {|D_x^\alpha\partial_t^au(x,t)-D_x^\alpha\partial_{t'}^au(x,t')|^{p_0}\over|t-t'|^{1+p_0({s\over 2}-[{s\over 2}])}}dtdt'\bigg)^{p/p_0}dx\bigg]^{1/p}\!,\cr}
          $$
          where $a\in\N\cup\{0\}$, $[s]$ is the integer part of $s$ and $D_x^\alpha$ denotes the\break partial derivative in the spatial variable $x$ corresponding to multi-\break index $\alpha$. For $s$ odd the last but one term in the above norm vanishes whereas for $s$ even the last two terms vanish. We also use notation\break $L_p(\Omega^T)=L_{p,p}(\Omega^T)$, $W_p^{s,s/2}(\Omega^T)=W_{p,p}^{s,s/2}(\Omega^T)$.
    \item[3.] $B_{p,p_0}^l(\Omega)$, $l\in\R_+$, $p,p_0\in[1,\infty)$ -- the Besov space with the finite norm
          $$
            \|u\|_{B_{p,p_0}^l(\Omega)}=\|u\|_{L_p(\Omega)}+\bigg(\sum_{i=1}^n\intop_0^\infty {\|\Delta_i^m(h,\Omega)\partial_{x_i}^ku\|_{L_p(\Omega)}^{p_0}\over h^{1+(l-k)_{p_0}}}dh\bigg)^{1/p_0},
          $$
          where $k\in\N\cup\{0\}$, $m\in\N$, $m>l-k>0$, $\Delta_i^j(h,\Omega)u$, $j\in\N$, $h\in\R_+$ is the finite difference of the order $j$ of the function $u(x)$ with respect to $x_i$ with
          $$\eqal{
              &\Delta_i^1(h,\Omega)u=\Delta_i(h,\Omega)\cr
              &=u(x_1,\dots,x_{i-1},x_i+h,x_{i+1},\dots,x_n)-u(x_1,\dots,x_n),\cr
              &\Delta_i^j(h,\Omega)=\Delta_i(h,\Omega)\Delta_i^{j-1}(h,\Omega)u\quad {\rm and}\ \ \Delta_i^j(h,\Omega)u=0\cr
              &{\rm for}\ \ x+jh\not\in\Omega.\cr}
          $$
          It has been proved in \cite{G} that the norms of the Besov space $B_{p,p_0}^l(\Omega)$ are equivalent for different $m$ and $k$ satisfying the condition $m>l-k>0$.
  \end{itemize}
\end{definition}

We need the following interpolation lemma.

\begin{lemma}[Anisotropic interpolation, see {\cite[Ch. 4, Sect. 18]{BIN}}]\label{l7.2}
  Let $u\in W_{p,p_0}^{s,s/2}(\Omega^T)$, $s\in\R_+$, $p,p_0\in[1,\infty]$, $\Omega\subset\R^3$. Let $\sigma\in\R_+\cup\{0\}$, and
  $$
    \varkappa={3\over p}+{2\over p_0}-{3\over q}-{2\over q_0}+|\alpha|+2a+\sigma<s.
  $$
  Then $D_x^\alpha\partial_t^au\in W_{q,q_0}^{\sigma,\sigma/2}(\Omega^T)$, $q\ge p$, $q_0\ge p_0$ and there exists $\varepsilon\in(0,1)$ such that
  $$
    \|D_x^\alpha\partial_t^au\|_{W_{q,q_0}^{\sigma,\sigma/2}(\Omega^T)}\le\varepsilon^{s-\varkappa} \|u\|_{W_{p,p_0}^{s,s/2}(\Omega^t)}+c\varepsilon^{-\varkappa}\|u\|_{L_{p,p_0}(\Omega^t)}.
  $$
  We recall from \cite{B} the trace and the inverse trace theorems for Sobolev spaces with a mixed norm.
\end{lemma}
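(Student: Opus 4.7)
The plan is to reduce the claim to a standard Gagliardo--Nirenberg / Bernstein argument in anisotropic (parabolic) scaling on the whole space, following BIN, Ch.~4, Sect.~18, from which the lemma is quoted.

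First I would construct a bounded extension operator $E\colon W_{p,p_0}^{s,s/2}(\Omega^T)\to W_{p,p_0}^{s,s/2}(\R^3\times\R)$ that is simultaneously bounded on $L_{p,p_0}$. Since $\Omega$ is a finite Lipschitz cylinder and $(0,T)$ a bounded interval, the standard local reflection plus partition-of-unity construction (restricted reflections near $S_1$ in the radial direction, even/odd reflections in $z$ near $S_2$, and temporal reflection near $t=0,T$) yields such an extension with norm depending only on $s,p,p_0,\Omega,T$. After extension it suffices to prove the inequality on $\R^3\times\R$.

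On the whole space I would use the Fourier characterization of the anisotropic norm, namely that $\|u\|_{W_{p,p_0}^{s,s/2}}$ is equivalent (for non-integer $s$ via Littlewood--Paley, for integer $s$ via Riesz multipliers) to an $L_{p,p_0}$-norm of the parabolic multiplier $(1+|\xi|^2+|\tau|)^{s/2}\widehat u(\xi,\tau)$. In this picture $\varkappa$ is exactly the scaling deficit of the map
$$D_x^\alpha\partial_t^a\colon W_{p,p_0}^{s,s/2}(\R^3\times\R)\longrightarrow W_{q,q_0}^{\sigma,\sigma/2}(\R^3\times\R),$$
with $3/p+2/p_0-3/q-2/q_0$ supplied by Bernstein's inequality to raise integrability (parabolic dimension $3+2=5$) and $|\alpha|+2a+\sigma$ counting the derivatives, where $\partial_t$ carries weight $2$. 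The two-term bound then follows by splitting $u=P_{\le N}u+P_{>N}u$ at a dyadic parabolic frequency threshold $N$: the low-frequency piece is controlled by $\|u\|_{L_{p,p_0}}$ times $N^{\varkappa}$ via Bernstein, while the high-frequency piece is controlled by $\|u\|_{W_{p,p_0}^{s,s/2}}$ times $N^{-(s-\varkappa)}$. Choosing $N=\varepsilon^{-1}$ produces the announced powers $\varepsilon^{s-\varkappa}$ and $\varepsilon^{-\varkappa}$; the hypothesis $\varkappa<s$ is precisely what guarantees strict positivity of the interpolation exponent $s-\varkappa$.

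The main obstacle, should one wish to write out the details, is the anisotropic bookkeeping: because $\partial_t$ scales as $D_x^2$, Bernstein and extension arguments must be performed with the parabolic quasi-metric on $\R^3\times\R$, and the mixed norm $L_{p,p_0}$ forces one to apply the spatial and temporal Fourier multiplier estimates separately and then combine them via Minkowski's inequality in the iterated $L_q$-$L_{q_0}$ ordering. Since the statement is quoted from BIN, in the present paper no independent proof is offered --- the lemma is invoked solely as a technical tool for the Stokes-system estimates that raise regularity in the proof of Theorem~\ref{t1.3}.
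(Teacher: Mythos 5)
The paper offers no proof of this statement: Lemma \ref{l7.2} is quoted verbatim from \cite[Ch.~4, Sect.~18]{BIN} and is used purely as a black box in Section \ref{s7}, so there is nothing in the text to compare your argument against line by line. Your sketch is a legitimate modern route to such an estimate, and your identification of $\varkappa$ as the parabolic scaling deficit (spatial dimension $3$ plus temporal weight $2$, with $\partial_t\sim D_x^2$) together with the low/high frequency splitting at $N=\varepsilon^{-1}$ does reproduce the stated powers $\varepsilon^{-\varkappa}$ and $\varepsilon^{s-\varkappa}$, with $\varkappa<s$ ensuring the high-frequency exponent is positive. Two caveats are worth flagging, and they explain why BIN proceeds differently. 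First, the space $W_{p,p_0}^{s,s/2}$ is defined in Definition \ref{d7.1} by Slobodetskii double integrals; for fractional $s$ and $p\neq2$ this is a Besov-type norm and is \emph{not} equivalent to the $L_{p,p_0}$-norm of the Bessel-type multiplier $(1+|\xi|^2+|\tau|)^{s/2}\widehat u$, so the Fourier characterization you invoke has to be replaced by the Littlewood--Paley characterization of Besov norms (a fixable but real adjustment). Second, the hypothesis allows $p,p_0\in[1,\infty]$, where Mikhlin--H\"ormander multiplier theory is unavailable; one must instead check that the relevant cutoff kernels are uniformly in $L^1$ under parabolic rescaling, or, as BIN actually does, argue via integral representations of functions and pointwise estimates of averaged difference operators, which handle the endpoints and the mixed $L_{q}$--$L_{q_0}$ ordering directly. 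With those repairs your outline would constitute a complete proof; as written it is a correct strategy with the standard technical gaps one expects when translating a Nikolskii-school embedding theorem into Littlewood--Paley language.
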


\begin{lemma}\label{l7.3}
  (traces in $W_{p,p_0}^{s,s/2}(\Omega^T)$, see \cite{B})
  \begin{itemize}
    \item[(i)] Let $u\in W_{p,p_0}^{s,s/2}(\Omega^t)$, $s\in\R_+$, $p,p_0\in(1,\infty)$. Then $u(x,t_0)=u(x,t)|_{t=t_0}$ for $t_0\in[0,T]$ belongs to $B_{p,p_0}^{s-2/p_0}(\Omega)$, and
          $$
            \|u(\cdot,t_0)\|_{B_{p,p_0}^{s-2/p_0}(\Omega)}\le c\|u\|_{W_{p,p_0}^{s,s/2}(\Omega^T)},
          $$
          where $c$ does not depend on $u$.
    \item[(ii)] For given $\bar u\in B_{p,p_0}^{s-2/p_0}(\Omega)$, $s\in\R_+$, $s>2/p_0$, $p_0\in(1,\infty)$, there exists a function $u\in W_{p,p_0}^{s,s/2}(\Omega^t)$ such that $u|_{t=t_0}=\bar u$ for $t_0\in[0,T]$ and
          $$
            \|u\|_{W_{p,p_0}^{s,s/2}(\Omega^T)}\le c\|\bar u\|_{B_{p,p_0}^{s-2/p_0}(\Omega)},
          $$
          where constant $c$ does not depend on $\bar u$.
  \end{itemize}
\end{lemma}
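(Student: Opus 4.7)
The plan is to follow the classical approach of Besov--Il'in--Nikol'skii for anisotropic spaces with mixed norm, viewing $W_{p,p_0}^{s,s/2}(\Omega^T)$ as the intersection of the two parabolic components
$$
L_{p_0}(0,T; W_p^s(\Omega)) \;\cap\; W_{p_0}^{s/2}(0,T; L_p(\Omega)),
$$
with equivalent norm. The temporal trace at a fixed $t_0$ then reduces to a one-dimensional trace in the $t$-variable with values in a Banach space, and the target space at the trace will be identified with the Besov space $B_{p,p_0}^{s-2/p_0}(\Omega)$ by a real interpolation argument.

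For part (i), I would first localize: using a smooth partition of unity subordinate to a finite atlas of $\Omega$ and straightening the boundary, it suffices to treat the model case $\Omega=\R^3$ (no spatial boundary is involved in the temporal trace, so the cylindrical geometry is inessential here). Next I would apply the standard trace theorem in $W_{p_0}^{s/2}(0,T;X)$ for a Banach space $X$: the restriction map at $t=t_0$ is continuous into the real interpolation space $(X, W_p^s(\Omega))_{\theta,p_0}$ with $\theta = 1-\tfrac{2}{sp_0}$, where $X=L_p(\Omega)$. Identifying this interpolation space with $B_{p,p_0}^{s-2/p_0}(\Omega)$ by the well-known characterization $(L_p, W_p^s)_{\theta,p_0} = B_{p,p_0}^{s\theta}$ yields the required estimate.

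For part (ii), I would construct an explicit extension operator. After extending $\bar u$ from $\Omega$ to $\R^3$ (using a standard Stein-type extension that is bounded on the appropriate Besov space), I would set
$$
u(x,t) = \int_{\R^3} K\!\left(\frac{x-y}{\sqrt{|t-t_0|}}\right) (|t-t_0|)^{-3/2}\, \bar u(y)\, dy,
$$
with $K$ a Schwartz mollifier of sufficiently high order, or alternatively use a dyadic Littlewood--Paley synthesis of $\bar u$ at scales $h\sim \sqrt{|t-t_0|}$. The verification that $\|u\|_{W_{p,p_0}^{s,s/2}(\Omega^T)}\le c\|\bar u\|_{B_{p,p_0}^{s-2/p_0}(\Omega)}$ proceeds by checking the two component norms separately: the $L_{p_0}(0,T;W_p^s)$ part is controlled by the Besov-space difference-quotient characterization of Definition 7.1(3), and the $W_{p_0}^{s/2}(0,T;L_p)$ part is controlled by a Hardy-type inequality in the $t$ variable applied to $\partial_t u$.

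The main obstacle is handling the genuinely mixed norm $p\ne p_0$: one cannot simply quote the scalar Hilbert/Banach interpolation identity, and the characterization $(L_p(\Omega), W_p^s(\Omega))_{\theta,p_0}=B_{p,p_0}^{s\theta}(\Omega)$ must be used precisely in its form with the second index $p_0$ (which is what produces the Besov space with two different indices). Keeping track of the correct index of real interpolation, and reconciling it with the difference-quotient definition of $B_{p,p_0}^l(\Omega)$ recalled after Definition 7.1, is the delicate step; once this identification is in hand, both parts of the lemma follow from the general one-variable trace/extension theorem in Bochner--Sobolev spaces, exactly as in \cite{B}.
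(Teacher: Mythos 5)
The paper does not prove this lemma at all: it is quoted verbatim from Bugrov \cite{B} (and the neighbouring results from \cite{BIN} and \cite{T}), so there is no in-paper argument to compare against. Your sketch is the standard ``trace method of real interpolation'' route: represent the anisotropic space as an intersection of two Bochner-type spaces, apply the one-dimensional temporal trace theorem with values in a Banach couple, and identify $(L_p(\Omega),W_p^s(\Omega))_{1-2/(sp_0),\,p_0}=B_{p,p_0}^{s-2/p_0}(\Omega)$; the lifting in (ii) via a parabolic mollifier $K_{\sqrt{|t-t_0|}}*\bar u$ is likewise the classical construction. This is a legitimate and, in modern language, cleaner route than Bugrov's original proof, which works directly with Sobolev-type integral representations and difference quotients in the mixed norm; your version buys generality (it works for any couple admitting the interpolation identity) at the price of having to import the vector-valued trace theorem and the Besov interpolation identity on domains as black boxes.

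There is, however, one genuine gap at the very first step. Definition \ref{d7.1}(2) of this paper puts the \emph{spatial} integral outermost in the temporal seminorm, i.e.\ the time-regularity component of $W_{p,p_0}^{s,s/2}(\Omega^T)$ is of the form $L_p\bigl(\Omega;W_{p_0}^{s/2}(0,T)\bigr)$, whereas your intersection representation uses $W_{p_0}^{s/2}\bigl(0,T;L_p(\Omega)\bigr)$, with the time integral outermost. For $p\ne p_0$ (which is exactly the regime used in Section \ref{s7}: $(p,p_0)=(3/2,2),(30/17,5/2),(15/7,10/3),\dots$) these mixed-norm spaces do not coincide in general; Minkowski's integral inequality gives only a one-sided embedding whose direction depends on the sign of $p-p_0$. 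So the asserted equivalence ``with equivalent norm'' is precisely the point that must either be proved for these parameters or circumvented by arguing directly in the mixed norm, as Bugrov does. Two smaller points: part (i) tacitly needs $s>2/p_0$ as well (otherwise $s-2/p_0\le 0$ and the pointwise temporal trace need not exist), and for the temporal trace no boundary straightening is required --- a simultaneous Stein extension bounded on $L_p(\Omega)$ and $W_p^s(\Omega)$ suffices and is preferable, since it manifestly preserves the couple being interpolated.
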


We need the following imbeddings between Besov spaces

\begin{lemma}[see {\cite[Th. 4.6.1]{T}}]\label{l7.4}
  Let $\Omega\subset\R^n$ be an arbitrary domain.
  \begin{itemize}
    \item[(a)] Let $s\in\R_+$, $\varepsilon>0$, $p\in(1,\infty)$, and $1\le q_1\le q_2\le\infty$. Then
          $$
            B_{p,1}^{s+\varepsilon}(\Omega)\subset B_{p,\infty}^{s+\varepsilon}(\Omega)\subset B_{p,q_1}^s(\Omega)\subset B_{p,q_2}^s(\Omega)\subset B_{p,1}^{s-\varepsilon}(\Omega)\subset B_{p,\infty}^{s-\varepsilon}(\Omega).
          $$
    \item[(b)] Let $\infty>q\ge p>1$, $1\le r\le\infty$, $0\le t\le s<\infty$ and
          $$
            t+{n\over p}-{n\over q}\le s.
          $$
          then $B_{p,r}^s(\Omega)\subset B_{q,r}^t(\Omega)$.
  \end{itemize}
\end{lemma}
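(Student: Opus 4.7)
Lemma \ref{l7.4} collects two classical Besov-space embeddings that reduce to elementary one-dimensional inequalities once the Besov norm has been put in a convenient form. I would handle parts (a) and (b) by different methods: part (a) works directly with the integral-modulus norm of Definition \ref{d7.1}(3), whereas part (b) is much more transparent in the Littlewood--Paley formulation on $\R^n$.

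For part (a), after a dyadic decomposition $(0,\infty)=\bigcup_{j}[2^{-j-1},2^{-j})$ of the variable $h$, the Besov seminorm is equivalent to a weighted $\ell^{q}$-norm of the sequence $\{\|\Delta_i^m(2^{-j},\Omega)\partial_{x_i}^{k}u\|_{L_p}\}_{j}$. The embedding $B_{p,q_1}^{s}\subset B_{p,q_2}^{s}$ for $q_1\le q_2$ then reduces to the standard inclusion $\ell^{q_1}\subset\ell^{q_2}$. For the ``$\varepsilon$-loss'' embeddings such as $B_{p,\infty}^{s+\varepsilon}\subset B_{p,1}^{s-\varepsilon}$, I would split $\int_0^\infty\cdots dh/h$ at $h=1$: on $\{h\le 1\}$ the $B_{p,\infty}^{s+\varepsilon}$ hypothesis supplies the pointwise bound $\|\Delta_i^m(h,\Omega)\partial_{x_i}^{k}u\|_{L_p}\le C\,h^{s+\varepsilon-k}$, so the surplus $\varepsilon$ in the exponent makes the weight integrable at the origin, while on $\{h\ge 1\}$ the finite-difference operator is bounded on $L_p$ uniformly in $h$ and the weight alone is integrable at infinity (one chooses $k$ in the definition so that $s-\varepsilon-k>0$).

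For part (b), the plan is to extend $u\in B_{p,r}^{s}(\Omega)$ to an element of $B_{p,r}^{s}(\R^n)$ by a Besov-bounded extension operator (which exists for general $\Omega$ by the results in \cite{T}) and then apply the Littlewood--Paley decomposition $u=\sum_{j\ge-1}\Delta_j u$. Bernstein's inequality $\|\Delta_j u\|_{L_q}\le c\,2^{jn(1/p-1/q)}\|\Delta_j u\|_{L_p}$ combined with the characterisation $\|u\|_{B_{p,r}^{s}}\simeq\|(2^{js}\|\Delta_j u\|_{L_p})_{j}\|_{\ell^{r}}$ yields
\[
\|u\|_{B_{q,r}^{t}}\le c\,\bigl\|\bigl(2^{j(t+n/p-n/q-s)}\cdot 2^{js}\|\Delta_j u\|_{L_p}\bigr)_{j}\bigr\|_{\ell^{r}}\le c\,\|u\|_{B_{p,r}^{s}},
\]
where the hypothesis $t+n/p-n/q\le s$ makes the first factor $\le 1$ for $j\ge 0$; the low-frequency block $j=-1$ is controlled by Young's convolution inequality, and restriction back to $\Omega$ completes the estimate.

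The main technical obstacle is not in either of the two calculations above but in the equivalence between the integral-modulus norm of Definition \ref{d7.1}(3) and the Littlewood--Paley characterisation on a general domain $\Omega$, with constants uniform in the parameters $(s,p,r)$; this equivalence, together with the Besov extension theorem used in part (b), constitutes the nontrivial content supplied by \cite[Ch.~4]{T}. Once both are taken for granted, the remaining arguments are essentially the short one-dimensional manipulations sketched above, and the lemma follows.
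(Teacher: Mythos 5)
The paper offers no proof of this lemma at all: it is quoted verbatim from Triebel \cite[Th.~4.6.1]{T}, so there is no in-paper argument to compare yours against. Judged on its own, your sketch of part (a) is the standard and correct one: the dyadic discretisation of the $h$-integral reduces the monotonicity in the third index to $\ell^{q_1}\subset\ell^{q_2}$, and the $\varepsilon$-loss embeddings follow from the split at $h=1$ exactly as you describe (with the caveat, which you correctly flag, that controlling $\|\partial_{x_i}^k u\|_{L_p}$ for $1\le k<s$ by the Besov norm, and the independence of the norm from the admissible choice of $(m,k)$, are the nontrivial inputs — the latter is the content of \cite{G}, which the paper itself cites for this purpose). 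Part (a) is genuinely intrinsic and therefore valid for arbitrary $\Omega$, as claimed.

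The gap is in part (b). You reduce to $\R^n$ via ``a Besov-bounded extension operator (which exists for general $\Omega$)''; no such operator exists for an arbitrary domain. Bounded extension operators on $B^s_{p,r}(\Omega)$ require geometric hypotheses on $\Omega$ (Lipschitz boundary, an $(\varepsilon,\delta)$-condition, or at least the horn condition that the paper itself imposes in Lemma \ref{l7.6}), and without them the embedding $B^s_{p,r}(\Omega)\subset B^t_{q,r}(\Omega)$ with $q>p$ can fail — a domain with an outward cusp already defeats the corresponding Sobolev embedding. So your argument does not prove part (b) at the stated level of generality; the hypothesis ``arbitrary domain'' in the lemma is itself an overstatement inherited from the paper. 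For the application at hand this is harmless: $\Omega$ is a finite cylinder, hence a bounded Lipschitz domain, the extension operator exists, and your Littlewood--Paley/Bernstein computation (including the treatment of the critical case $t+n/p-n/q=s$ with equal third index $r$, and the low-frequency block) is the standard correct proof on $\R^n$. I would simply replace ``general $\Omega$'' by an explicit regularity assumption on $\Omega$ sufficient for extension, or else cite the intrinsic difference-based proof of \cite[Ch.~4]{BIN} under the horn condition.
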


\begin{lemma}[see {\cite[Ch. 4, Th. 18.8]{BIN}}]\label{lemma 7.5}
  Let $1\le\theta_1<\theta_2\le\infty$. Then
  $$
    \|u\|_{B_{p,\theta_2}^l(\Omega)}\le c\|u\|_{B_{p,\theta_1}^l(\Omega)},
  $$
  where $c$ does not depend on $u$.
\end{lemma}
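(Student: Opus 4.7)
The plan is to reduce the continuous Besov norms of Definition 7.1(3) to equivalent dyadic sequence norms, and then invoke the elementary embedding $\ell^{\theta_1}\hookrightarrow\ell^{\theta_2}$ for $\theta_1\le\theta_2$. Concretely, write the homogeneous part of the norm as $\|F\|_{L^\theta((0,\infty),dh/h)}$ with
\[
F(h):=\frac{1}{h^{l-k}}\sum_{i=1}^n\|\Delta_i^m(h,\Omega)\partial_{x_i}^k u\|_{L_p(\Omega)},
\]
where $m$ and $k$ are admissible parameters from the definition (recall the norm is independent of their choice up to equivalence, per \cite{G}).

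First I would establish the dyadic equivalence: for each $j\in\mathbb{Z}$ set $a_j:=F(2^j)$, and show that there are constants $c_1,c_2>0$ independent of $u$ with
\[
c_1\,\|(a_j)_{j\in\mathbb Z}\|_{\ell^\theta}\;\le\;\|F\|_{L^\theta((0,\infty),dh/h)}\;\le\;c_2\,\|(a_j)_{j\in\mathbb Z}\|_{\ell^\theta}.
\]
This step rests on the standard quasi-monotonicity of finite differences: the $L_p$-modulus of smoothness $h\mapsto\|\Delta_i^m(h,\Omega)\partial_{x_i}^k u\|_{L_p(\Omega)}$ is non-decreasing in $h$ (taking a supremum over a larger increment can only grow), while the weight $1/h^{l-k}$ is monotone, so on a dyadic annulus $[2^j,2^{j+1}]$ the integrand $F(h)^\theta/h$ is controlled from above and below, up to universal constants, by $a_j^\theta$ (or $a_{j+1}^\theta$). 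Integrating dyadically yields the two-sided bound.

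Second, with the equivalence in hand, I would invoke the elementary inclusion $\ell^{\theta_1}\hookrightarrow\ell^{\theta_2}$ for $\theta_1<\theta_2$, which produces $\|(a_j)\|_{\ell^{\theta_2}}\le\|(a_j)\|_{\ell^{\theta_1}}$ with constant $1$ (the usual argument: normalize by $\|(a_j)\|_{\ell^\infty}$, then $|a_j/M|^{\theta_2}\le|a_j/M|^{\theta_1}$ termwise, and use $\|(a_j)\|_{\ell^\infty}\le\|(a_j)\|_{\ell^{\theta_1}}$). Chaining this with the two equivalences from the first step gives the desired bound on the seminorm; the $\|u\|_{L_p}$ piece of the norm is identical on both sides. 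When $\theta_2=\infty$ the same scheme applies with the essential supremum replacing the integral, and the dyadic equivalence is even more direct.

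The main obstacle I expect is making the dyadic equivalence rigorous, because the constants depend on the quasi-monotonicity constant of the modulus of smoothness and on $m-(l-k)$; some care is needed when $l-k$ is close to $0$ or to $m$, so that neither endpoint of the integral dominates. Once the quasi-monotonicity is set up properly (this is classical, cf.\ \cite{BIN}, Ch.~4), everything else is routine. A clean alternative would be to use the equivalent Littlewood--Paley / Fourier-analytic characterization of $B^l_{p,\theta}(\mathbb{R}^n)$ after an extension from $\Omega$, where the sequence-norm form of the Besov space is built in and the claim reduces verbatim to $\ell^{\theta_1}\subset\ell^{\theta_2}$; this shortcut avoids the modulus-of-smoothness bookkeeping entirely.
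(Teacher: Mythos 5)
The paper does not prove this lemma at all: it is quoted verbatim from Besov--Il'in--Nikolskii \cite[Ch.~4, Th.~18.8]{BIN}, so there is no in-paper argument to compare against. Judged on its own, your sketch is the standard textbook proof and is essentially correct. You have also correctly identified the one point where the statement is \emph{not} a triviality: since $dh/h$ is an infinite measure on $(0,\infty)$, the inclusion $L^{\theta_1}(d\mu)\subset L^{\theta_2}(d\mu)$ is false in general, and some monotonicity input on the integrand is genuinely needed before one can reduce to $\ell^{\theta_1}\subset\ell^{\theta_2}$.

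Two refinements. First, as written in Definition \ref{d7.1}(3) the norm uses the raw finite difference $\|\Delta_i^m(h,\Omega)\partial_{x_i}^ku\|_{L_p(\Omega)}$, which is \emph{not} monotone in $h$; only the modulus of smoothness $\omega_m(t)=\sup_{0<h\le t}\|\Delta_i^m(h,\Omega)\partial_{x_i}^ku\|_{L_p(\Omega)}$ is. Your parenthetical ``taking a supremum over a larger increment'' shows you are implicitly using the $\omega_m$-form, so you should either first invoke the equivalence of the two forms of the norm (classical, via Marchaud-type inequalities; cf.\ \cite{BIN}, Ch.~4) or run the whole argument with $\omega_m$. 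Second, the two-sided dyadic equivalence is more than you need: it suffices to prove the one-sided embedding into $B^l_{p,\infty}$, namely
$$
\sup_{t>0}\,t^{-(l-k)}\omega_m(t)\le c\bigg(\intop_0^\infty\big(h^{-(l-k)}\omega_m(h)\big)^{\theta_1}\,{dh\over h}\bigg)^{1/\theta_1},
$$
which follows by integrating over a single interval $[t,2t]$ using $\omega_m(h)\ge\omega_m(t)$ there, and then to conclude for $\theta_1<\theta_2<\infty$ by the elementary interpolation $\|F\|_{L^{\theta_2}(dh/h)}\le\|F\|_{L^\infty}^{1-\theta_1/\theta_2}\|F\|_{L^{\theta_1}(dh/h)}^{\theta_1/\theta_2}$ (the case $\theta_2=\infty$ being the first display itself). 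This bypasses the lower bound in your dyadic equivalence, which is the only delicate half. With either repair your argument is complete.
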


\begin{lemma}[see {\cite[Ch. 4, Th. 18.9]{BIN}}]\label{l7.6}
  Let $l\in\N$ and $\Omega$ satisfy the $l$-horn condition. Then the following imbeddings hold
  $$\eqal{
      &\|u\|_{B_{p,2}^l(\Omega)}\le c\|u\|_{W_p^l(\Omega)}\le c\|u\|_{B_{p,p}^l(\Omega)},\quad &1\le p\le 2,\cr
      &\|u\|_{B_{p,p}^l(\Omega)}\le c\|u\|_{W_p^l(\Omega)}\le c\|u\|_{B_{p,2}^l(\Omega)},\quad &2\le p<\infty,\cr
      &\|u\|_{B_{p,\infty}^l(\Omega)}\le c\|u\|_{W_p^l(\Omega)}\le c\|u\|_{B_{p,1}^l(\Omega)},\quad &1\le p\le\infty.\cr}
  $$
\end{lemma}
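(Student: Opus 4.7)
The plan is to reduce all six inequalities to a combination of (i) an equivalent description of the Sobolev norm $W_p^l(\Omega)$ in terms of integrals of finite differences with a judiciously chosen fine exponent, and (ii) the monotonicity of Besov spaces in the third index already recorded in Lemma~\ref{lemma 7.5}.

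The first step is to invoke the classical characterization (see~\cite{BIN}) that, on an $l$-horn domain, the norm on $W_p^l(\Omega)$ is equivalent to
\[
\|u\|_{L_p(\Omega)} + \sum_{i=1}^{n}\Bigl(\int_0^{h_0} \frac{\|\Delta_i^m(h,\Omega)\,u\|_{L_p(\Omega)}^{q^\star}}{h^{1+lq^\star}}\, dh\Bigr)^{1/q^\star},
\]
for any $m>l$. The crucial point is the correct choice of the fine exponent: one has an upper bound for $\|u\|_{W_p^l}$ with $q^\star=\min(p,2)$ and a lower bound with $q^\star=\max(p,2)$. At $p=2$ both choices coincide, recovering the Parseval-type identity $W_2^l=B_{2,2}^l$. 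Comparing with Definition~\ref{d7.1} (taking $k=0$) identifies the resulting quantity as $\|u\|_{B_{p,q^\star}^l}$, so
\[
\|u\|_{B_{p,\max(p,2)}^l}\le c\,\|u\|_{W_p^l(\Omega)}\le c\,\|u\|_{B_{p,\min(p,2)}^l},\qquad 1<p<\infty.
\]
Splitting into the sub-cases $p\le 2$ and $p\ge 2$ and applying Lemma~\ref{lemma 7.5} to chain intermediate values of the third index reproduces the first two lines of the statement.

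For the endpoints $p=1$ and $p=\infty$ in the third line the $L_{\min(p,2)}$-based averaging above degenerates. Here I would argue more directly: the pointwise estimate $\|\Delta_i^m(h,\Omega)\partial_{x_i}^{l-1}u\|_{L_p}\le ch\,\|u\|_{W_p^l}$, which follows from Taylor's formula after using the $l$-horn condition to stay inside~$\Omega$, yields immediately $W_p^l\hookrightarrow B_{p,\infty}^l$ after dividing by $h^{1+1}$ and taking the $\sup$ in~$h$; conversely, the $B_{p,1}^l$ norm integrates the same modulus with the smallest possible weight in $h$, so by Fubini / Minkowski's integral inequality one recovers $\|u\|_{W_p^l}$, giving $B_{p,1}^l\hookrightarrow W_p^l$. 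The absence of a symmetric $B_{p,p}^l$ on the endpoints reflects precisely the failure of the Littlewood-Paley square-function equivalence outside $1<p<\infty$.

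The main obstacle is the first step, namely the equivalent characterization of $W_p^l(\Omega)$ with the \emph{correct} fine exponent $q^\star=\min(p,2)$ or $\max(p,2)$. This is a Littlewood-Paley / Marcinkiewicz-multiplier statement whose proof on a general $l$-horn domain proceeds via localization and a partition of unity reducing to $\Omega=\R^n$, where one applies the Mikhlin multiplier theorem. Since reproducing this lengthy classical argument would go well beyond the scope of the present section, I would limit the proof to citing \cite[Ch.~4, Th.~18.9]{BIN}, where all six cases are treated in detail.
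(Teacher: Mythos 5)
The paper offers no proof of this lemma at all---it is imported verbatim from \cite[Ch.~4, Th.~18.9]{BIN}---and your proposal likewise ends by deferring the essential Littlewood--Paley/multiplier step to that same reference, so the two treatments coincide in substance. Your preliminary outline (the characterization of $W_p^l$ with fine index $\min(p,2)$ for the upper bound and $\max(p,2)$ for the lower bound, chained with the monotonicity in the third Besov index from Lemma~\ref{lemma 7.5}, plus the direct modulus-of-continuity argument for the $B_{p,\infty}^l$/$B_{p,1}^l$ line) is a correct account of why the result holds for $1<p<\infty$, though it does not by itself cover the endpoint $p=1$ of the first displayed inequality, which is again settled only by the cited theorem.
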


Consider the nonstationary Stokes system in $\Omega\subset\R^3$:
$$
  \begin{aligned}
     & v_t-\nu\Delta v+\nabla p=f,                             \\
     & \divv v=0,                                              \\
     & v_r=v_\varphi=\omega_\varphi=0   &  & \text{ on }S_1^T, \\
     & v_z=\omega_\varphi=v_{\varphi,z}=0 &  & \text{ on }S_2^T, \\
     & v|_{t=0}=v(0),
  \end{aligned}
$$
with the boundary conditions (\ref{1.2}) and given initial condition $v(0)$.

\begin{lemma}[see \cite{MS}]\label{l7.7}
  Assume that $f\in L_{q,r}(\Omega^T)$, $v(0)\in B_{q,r}^{2-2/r}(\Omega)$, $r,q\in(1,\infty)$. Then there exists a unique solution to the above system such that $v\in W_{q,r}^{2,1}(\Omega^T)$, $\nabla p\in L_{q,r}(\Omega^T)$ with the following estimate
  \begin{equation}\eqal{
      \|v\|_{W_{q,r}^{2,1}(\Omega^T)}+\|\nabla p\|_{L_{q,r}(\Omega^t)}&\le  c(\|f\|_{L_{q,r}(\Omega^t)}+\|v(0)\|_{B_{q,r}^{2-2/r}(\Omega)}).\cr}
    \label{7.2}
  \end{equation}
\end{lemma}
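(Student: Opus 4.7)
The plan is to follow the standard operator-theoretic route to maximal $L_r$-regularity for the Stokes system, adapted to the mixed (partial slip/no-slip) boundary conditions on $S_1$ and $S_2$. Since the result is attributed to \cite{MS}, I will only sketch how the proof is organised rather than re-deriving each ingredient.

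First I would reduce to zero initial data. Applying the inverse trace theorem (Lemma \ref{l7.3}(ii)) to $v(0)\in B_{q,r}^{2-2/r}(\Omega)$ produces an extension $\tilde v\in W_{q,r}^{2,1}(\Omega^T)$ with $\tilde v|_{t=0}=v(0)$ and the norm bound
\[
\|\tilde v\|_{W_{q,r}^{2,1}(\Omega^T)}\le c\,\|v(0)\|_{B_{q,r}^{2-2/r}(\Omega)}.
\]
Because of the compatibility of $v(0)$ with $\divv v=0$ and with the boundary conditions (\ref{1.2}), one can arrange $\divv\tilde v=0$ and the boundary conditions to be preserved by $\tilde v$ (correcting by a Bogovskii-type operator if necessary). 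Writing $v=\tilde v+w$ and $\pi=p$, the unknown $w$ solves the same Stokes system with modified right-hand side $\tilde f\colon=f-\tilde v_t+\nu\Delta\tilde v\in L_{q,r}(\Omega^T)$, homogeneous boundary data and $w|_{t=0}=0$. Thus it suffices to prove (\ref{7.2}) in the case $v(0)=0$.

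Next I would pass to an operator framework. Let $P_q$ be the Helmholtz projector on $L_q(\Omega)$ onto solenoidal fields satisfying $v\cdot\bar n=0$ on $S$, and let $A_q\colon=-\nu P_q\Delta$ be the Stokes operator on the domain
\[
D(A_q)=\{u\in W_q^2(\Omega)\colon \divv u=0,\ u\text{ satisfies }(\ref{1.2})\}.
\]
The axisymmetric mixed boundary conditions (Dirichlet on $v_r,v_\varphi$ at $S_1$, Neumann-type on $v_{\varphi,z}$, $v_{r,z}$ and $v\cdot\bar n=0$ on $S_2$, together with $\omega_\varphi=0$) are of the type treated in \cite{MS}: the associated Stokes operator generates a bounded analytic semigroup on $L_q^\sigma(\Omega)$ for all $q\in(1,\infty)$, and admits bounded $H^\infty$-calculus (equivalently, $\mathcal{R}$-boundedness of the resolvent on a sector). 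With $w_t+A_qw=P_q\tilde f$, $w(0)=0$, the Dore–Venni / Weis theorem then yields maximal $L_r$-regularity:
\[
\|w_t\|_{L_{q,r}(\Omega^T)}+\|A_qw\|_{L_{q,r}(\Omega^T)}\le c\,\|P_q\tilde f\|_{L_{q,r}(\Omega^T)}.
\]
Recovering $\nabla p$ from $(I-P_q)\tilde f=\nabla p-\nu(I-P_q)\Delta w$ and using the full elliptic regularity $\|w\|_{W_q^2(\Omega)}\le c\|A_qw\|_{L_q(\Omega)}$ (for this class of boundary conditions), one arrives at
\[
\|w\|_{W_{q,r}^{2,1}(\Omega^T)}+\|\nabla p\|_{L_{q,r}(\Omega^T)}\le c\,\|\tilde f\|_{L_{q,r}(\Omega^T)}.
\]
Combining with the estimate for $\tilde v$ yields (\ref{7.2}).

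The main obstacle is not the abstract machinery but verifying, for the specific boundary operator in (\ref{1.2}), the two analytic facts on which the whole scheme rests: (a) full $W_q^2$-elliptic regularity for the associated stationary Stokes problem with these mixed conditions, and (b) $\mathcal{R}$-boundedness (or bounded imaginary powers) of the Stokes resolvent family on $L_q^\sigma(\Omega)$. In \cite{MS} these are obtained by localizing to flat/axisymmetric model problems (half-space and infinite cylinder), using that the boundary conditions decouple into Dirichlet-type on $S_1$ and Neumann-type on $S_2$ without corner singularities of the solenoidal vector field, and then patching via a standard partition of unity. Once (a) and (b) are in hand the conclusion is automatic from Weis's characterisation of maximal $L_r$-regularity, which is why the authors simply quote \cite{MS}.
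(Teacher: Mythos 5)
The paper does not actually prove Lemma \ref{l7.7}: it is stated purely as a quotation of the Maremonti--Solonnikov result \cite{MS}, so there is no internal argument to measure your proposal against. Judged on its own terms, your sketch follows the modern operator-theoretic route (analytic semigroup for the Stokes operator, bounded $H^\infty$-calculus or $\mathcal{R}$-bounded resolvents, Weis's characterisation of maximal $L_r$-regularity). That is a genuinely different path from the one in \cite{MS}, which predates the $\mathcal{R}$-boundedness technology and proceeds by Solonnikov's potential-theoretic scheme: explicit solution formulas for the whole-space and half-space model problems, pointwise kernel estimates, a Marcinkiewicz-type multiplier theorem adapted to the mixed norm $L_{q,r}$, and then localization plus a perturbation argument. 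Your reduction to zero initial data via the inverse trace theorem (Lemma \ref{l7.3}(ii)) is consistent with either approach, modulo the compatibility and divergence-free correction of the extension that you rightly flag but do not carry out.

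The genuine gaps are precisely the two points you defer at the end. First, $\Omega$ is a finite cylinder and therefore has an edge where $S_1$ meets $S_2$; a partition of unity reduces the interior and the smooth boundary pieces to half-space model problems, but near the edge one needs a dihedral (quarter-space) model problem with the Dirichlet-type conditions of $S_1$ on one face and the slip-type conditions of $S_2$ on the other, and the unique solvability of that model problem in $W_{q,r}^{2,1}$ for all $q,r\in(1,\infty)$ is exactly what cannot be waved away by ``patching''; it is also not literally contained in \cite{MS}, which is a soft spot of the paper's own citation as much as of your sketch. Second, the boundary operator in (\ref{1.2}) involves $\omega_\varphi$ and $v_{\varphi,z}$ rather than standard Dirichlet or Neumann data, so one must verify that it is complementing (Lopatinskii--Shapiro) for the Stokes system and that the resulting Stokes operator on $L_q^\sigma(\Omega)$ has the resolvent bounds your semigroup argument requires; asserting that the conditions ``are of the type treated in \cite{MS}'' is a citation, not a verification. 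Without these two verifications the proposal is an accurate roadmap but not a proof.
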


\subsection{Proof of (\ref{7.1})}\label{s7.2}

We show (\ref{7.1}) in the following series of lemmas.

\begin{lemma}\label{l7.8}
  Suppose that $X_s(t)<\infty$, i.e. that
  \begin{equation}
    \|\Phi\|_{V(\Omega^t)}+\|\Gamma\|_{V(\Omega^t)}\le\phi_1,
    \label{7.3}
  \end{equation}
  where $\phi_1=\Psi_{s,A,c_0}\left(\mathrm{data}, \int_0^t E_{W,s}(\tau)\,d\tau\right)$. Assume that
  $$
    f\in W_2^{2,1}(\Omega^t),\quad v(0)\in W_2^3(\Omega).
  $$
  Then
  \begin{equation}
    \|v\|_{W_2^{4,2}(\Omega^t)}+\|\nabla p\|_{W_2^{2,1}(\Omega^t)}\le\phi(\phi_1,\|f\|_{W_2^{2,1}(\Omega^t)},\|v(0)\|_{H^3(\Omega)}).
    \label{7.4}
  \end{equation}
\end{lemma}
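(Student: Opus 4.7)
The plan is to bootstrap the regularity of $v$ via iterated applications of the mixed-norm Stokes estimate of Lemma~\ref{l7.7} to system~(\ref{1.54}). The argument falls into three stages, mirroring and completing the sketch already present in the proof of Theorem~\ref{t1.3}.

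First, from (\ref{7.3}) and the $H^2$-elliptic estimate of Lemma~\ref{l4.1} I obtain $\|\psi_1\|_{L_\infty(0,t;H^2(\Omega))}\le c\phi_1$, which by the representation~(\ref{1.19}) and Sobolev embedding gives $\|v'\|_{L_\infty(0,t;L_6(\Omega))}\le c\phi_1$. Combining this with Lemma~\ref{l2.4} and H\"older produces $\|v\cdot\nabla v\|_{L_2(0,t;L_{3/2}(\Omega))}\le c\phi_1 D_1$, and Lemma~\ref{l7.7} applied to~(\ref{1.54}) with $(q,r)=(3/2,2)$ then yields $v\in W_{3/2,2}^{2,1}(\Omega^t)$, which is precisely the first regularity gain obtained in~(\ref{1.55}).

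Second, I would iterate. At each step, control of $v$ in some $W_{p,r}^{2,1}(\Omega^t)$ together with the anisotropic interpolation Lemma~\ref{l7.2} yields a strictly better integrability for $\nabla v$; combined with the $L_\infty(0,t;L_6)$ bound on $v'$ from the first stage, the nonlinear term $v\cdot\nabla v$ lands in a strictly stronger mixed-norm Lebesgue space than at the previous step, and Lemma~\ref{l7.7} upgrades the regularity of $v$ accordingly. Choosing the exponents analogously to the passage from~(\ref{1.53}) to~(\ref{1.56}) and using the embeddings of Lemma~\ref{l7.4}, after finitely many iterations I reach $v\in W_2^{2,1}(\Omega^t)$ with a bound depending only on $\phi_1$, $\|f\|_{W_2^{2,1}(\Omega^t)}$ and $\|v_0\|_{H^3(\Omega)}$.

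Third, to climb from $W_2^{2,1}$ up to $W_2^{4,2}$, I would differentiate~(\ref{1.54}) in $t$: viewing $v_{,t}$ as the solution of a Stokes system with forcing $f_{,t}-(v\cdot\nabla v)_{,t}$ and initial datum $v_{,t}|_{t=0}$ read off from the equation and $v_0$, the $W_2^{2,1}$ regularity obtained in Stage~2 places the new forcing in $L_2(\Omega^t)$, so Lemma~\ref{l7.7} with $(q,r)=(2,2)$ gives $v_{,t}\in W_2^{2,1}(\Omega^t)$. Parallel applications to purely spatial differences, combined with the cylindrical structure and with the Liu-Wang expansions~(\ref{1.21}) and the Hardy inequality of Lemma~\ref{l2.1} to control the lower-order singular terms near the axis of symmetry, yield $v\in W_2^{4,2}(\Omega^t)$. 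Reading $\nabla p$ from the momentum equation then places $\nabla p\in W_2^{2,1}(\Omega^t)$, proving~(\ref{7.4}). The main obstacle is the bookkeeping for this last stage: each differentiation of~(\ref{1.54}) has to preserve the mixed Dirichlet/Neumann-type boundary conditions~(\ref{1.2}), which forces $v_0$ and $f$ to satisfy compatibility conditions at $t=0$ of the right order, as encoded in the Besov trace theory of Lemma~\ref{l7.3}; the hypotheses $f\in W_2^{2,1}(\Omega^t)$ and $v_0\in W_2^3(\Omega)$ are tailored precisely to ensure these compatibilities, and together with the axis-control from~(\ref{1.21}) they allow the bootstrap to terminate at the asserted regularity.
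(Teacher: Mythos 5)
Your Stages 1 and 2 follow the paper's proof: the $H^2$ elliptic estimate for $\psi_1$ giving $\|v'\|_{L_\infty(0,t;L_6(\Omega))}\le c\phi_1$, the resulting bound $|v'\cdot\nabla v|_{3/2,2,\Omega^t}\le c\phi_1 D_1$, the first application of Lemma~\ref{l7.7} to (\ref{7.10}), and the subsequent iteration through the chain $W^{2,1}_{3/2,2}\to W^{2,1}_{30/17,5/2}\to W^{2,1}_{15/7,10/3}\to W^{2,1}_{5'}$ are exactly (\ref{7.11})--(\ref{7.26}). However, your stated endpoint for Stage 2, namely $v\in W_2^{2,1}(\Omega^t)$, is strictly weaker than what the paper's iteration delivers and is not sufficient for your Stage 3. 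The paper pushes the integrability up to $W_{5'}^{2,1}(\Omega^t)$ with $5'$ arbitrarily close to $5$, which is precisely what yields $v\in L_\infty(\Omega^t)$ and $\nabla v\in L_q(\Omega^t)$ for every finite $q$; these are the bounds that make the nonlinearity harmless in the final step.

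The genuine gap is in Stage 3. You claim that the $W_2^{2,1}$ regularity places $f_{,t}-(v\cdot\nabla v)_{,t}$ in $L_2(\Omega^t)$, but $(v\cdot\nabla v)_{,t}=v_{,t}\cdot\nabla v+v\cdot\nabla v_{,t}$: with only $v\in W_2^{2,1}(\Omega^t)$ one has $v_{,t}\in L_2(\Omega^t)$ and $\nabla v\in L_{10/3}(\Omega^t)$, so the first product lies merely in $L_{5/4}(\Omega^t)$, and the second requires $\nabla v_{,t}$, which is part of what you are trying to prove --- the argument is circular as written. Moreover $f\in W_2^{2,1}(\Omega^t)$ gives $f_{,t}\in L_2(\Omega^t)$ but not two time derivatives, so repeating the time-differentiation to reach $W_2^{4,2}$ is not available. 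The paper avoids both problems by first securing $v\in L_\infty(\Omega^t)$, $\nabla v\in L_q(\Omega^t)$ for all $q<\infty$ (via (\ref{7.22})--(\ref{7.26})) and then invoking the Maremonti--Solonnikov solvability in the fractional Sobolev--Slobodetskii scale: it suffices to bound $\nabla(v'\cdot\nabla v)$ and the \emph{half} time derivative $\partial_t^{1/2}(v'\cdot\nabla v)$ in $L_{5'}(\Omega^t)$, neither of which requires $\nabla v_{,t}$, to conclude $v\in W_{10/3}^{3,3/2}(\Omega^t)$ and then $v\in W_2^{4,2}(\Omega^t)$. If you want to keep the differentiation route you must first raise the integrability as the paper does and then supply a closed (e.g.\ Gronwall-type) estimate for $v_{,t}$ in $W_2^{2,1}$ that does not presuppose control of $\nabla v_{,t}$; as written, the step fails.
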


\begin{proof}
  From (\ref{7.3}) we have
  \begin{equation}
    \|\Gamma\|_{V(\Omega^t)}\le\phi_1.
    \label{7.5}
  \end{equation}
  By the elliptic estimates for $\psi_1$ in Section \ref{s4}, we obtain
  \begin{equation}
    \|\psi_1\|_{2,\infty,\Omega^t}\le c\phi_1,
    \label{7.6}
  \end{equation}
  \begin{equation}
    \|\psi_1\|_{3,2,\Omega^t}\le c\phi_1.
    \label{7.7}
  \end{equation}
  From (\ref{1.19}) the following relations hold
  \begin{equation}
    v_r=-r\psi_{1,z},\quad v_z=2\psi_1+r\psi_{1,r}.
    \label{7.8}
  \end{equation}
  Hence (\ref{7.6}) and $R$ finite imply
  \begin{equation}
    \|v_r\|_{1,2,\infty,\Omega^t}+\|v_z\|_{1,2,\infty,\Omega^t}\le c\phi_1.
    \label{7.9}
  \end{equation}
  To increase regularity of $v$ we consider the Stokes problem
  \begin{equation}\eqal{
      &v_{,t}-\nu\Delta v+\nabla p=-v'\cdot\nabla v+f\quad &{\rm in}\ \ \Omega^T,\cr
      &\divv v=0\quad &{\rm in}\ \ \Omega^T,\cr
      &v\cdot\bar n=0,\ \ v_{z,r}=0,\ \ v_\varphi=0\quad &{\rm on}\ \ S_1^T,\cr
      &v\cdot\bar n=0,\ \ v_{r,z}=0,\ \ v_{\varphi,z}=0\quad &{\rm on}\ \ S_2^T,\cr
      &v|_{t=0}=v(0)\quad &{\rm in}\ \ \Omega,\cr}
    \label{7.10}
  \end{equation}
  where $v'=v_r\bar e_r+v_z\bar e_z$.

  From (\ref{7.9}) we have
  \begin{equation}
    |v'|_{6,\infty,\Omega^t}\le c\phi_1.
    \label{7.11}
  \end{equation}
  For solutions to (\ref{7.10}) the following energy estimate holds
  \begin{equation}
    \|v\|_{V(\Omega^t)}\le c(|f|_{2,\Omega^t}+|v(0)|_{2,\Omega})\equiv d_1.
    \label{7.12}
  \end{equation}
  Hence, (\ref{7.12}) yields
  \begin{equation}
    |\nabla v|_{2,\Omega^t}\le d_1.
    \label{7.13}
  \end{equation}
  Estimates (\ref{7.11}) and (\ref{7.13}) imply
  \begin{equation}
    |v'\cdot\nabla v|_{3/2,2,\Omega^t}\le c\phi_1d_1.
    \label{7.14}
  \end{equation}
  Applying \cite{MS} to (\ref{7.10}) yields
  \begin{equation}\eqal{
      &\|v\|_{W_{{3\over 2},2}^{2,1}(\Omega^t)}+|\nabla p|_{3/2,2,\Omega^t}\cr
      &\le c(|f|_{3/2,2,\Omega^t}+\|v(0)\|_{B_{3/2,2}^1(\Omega)}+\phi_1d_1)\equiv d_2.\cr}
    \label{7.15}
  \end{equation}
  In view of the imbedding (see \cite[Ch. 3, Sect. 10]{BIN})
  \begin{equation}
    |\nabla v|_{5/2,\Omega^t}\le c\|v\|_{W_{3/2,2}^{2,1}(\Omega^t)}
    \label{7.16}
  \end{equation}
  and (\ref{7.11}) we derive that
  \begin{equation}
    |v'\cdot\nabla v|_{{30\over 17},{5\over 2},\Omega^t}\le c\phi_1d_2.
    \label{7.17}
  \end{equation}
  Then applying again \cite{MS} to problem (\ref{7.10}) yields
  \begin{equation}\eqal{
      &\|v\|_{W_{{30\over 17},{5\over 2}}^{2,1}(\Omega^t)}+|\nabla p|_{{30\over 17},{5\over 2},\Omega^t}\cr
      &\le c(|f|_{{30\over 17},{5\over 2},\Omega^t}+\|v(0)\|_{B_{{30\over 17},{5\over 2}}^{2-4/5}(\Omega)}+\phi_1d_2)\equiv d_3.\cr}
    \label{7.18}
  \end{equation}
  In view of the imbedding (see \cite[Ch. 3, Sect. 10]{BIN})
  \begin{equation}
    |\nabla v|_{{10\over 3},\Omega^t}\le c\|v\|_{W_{{30\over 17},{5\over 2}}^{2,1}(\Omega^t)}
    \label{7.19}
  \end{equation}
  and (\ref{7.11}) we have
  \begin{equation}
    |v'\cdot\nabla v|_{{15\over 7},{10\over 3},\Omega^t}\le c\phi_1d_3.
    \label{7.20}
  \end{equation}
  Applying \cite{MS} to (\ref{7.10}) implies
  \begin{equation}\eqal{
      &\|v\|_{W_{{15\over 7},{10\over 3}}^{2,1}(\Omega^t)}+|\nabla p|_{{15\over 7},{10\over 3},\Omega^t}\cr
      &\le c(|f|_{{15\over 7},{10\over 3},\Omega^t}+\|v(0)\|_{B_{{15\over 7},{10\over 3}}^{2-6/10}(\Omega)}+\phi_1d_3)=d_4.\cr}
    \label{7.21}
  \end{equation}
  Lemma \ref{l7.3} yields
  \begin{equation}
    \|v\|_{L_\infty(0,t;B_{{15\over 7},{10\over 3}}^{2-6/10}(\Omega^t))}\le c\|v\|_{W_{{15\over 7},{10\over 3}}^{2,1}(\Omega^t)}.
    \label{7.22}
  \end{equation}
  Theorem 18.10 from \cite{BIN} gives
  \begin{equation}
    |v(t)|_{q,\Omega}\le c\|v\|_{B_{{15\over 7},{10\over 3}}^{7/5}(\Omega)}.
    \label{7.23}
  \end{equation}
  The estimate holds for any finite $q$ because it satisfies the relation $7/5\ge 7/5-3/q$.

  Next, we use the imbedding (see \cite[Ch. 3, Sect. 10]{BIN})
  \begin{equation}
    |\nabla v|_{5,\Omega^t}\le c\|v\|_{W_{{15\over 7},{10\over 3}}^{2,1}(\Omega^t)}.
    \label{7.24}
  \end{equation}
  From (\ref{7.23}) and (\ref{7.24}) we have
  \begin{equation}
    |v\cdot\nabla v|_{5',\Omega^t}\le cd_4^2,
    \label{7.25}
  \end{equation}
  where $5'<5$ but it is arbitrary close to 5.

  In view of (\ref{7.25}) and \cite{MS} we have
  \begin{equation}
    \|v\|_{W_{5'}^{2,1}(\Omega^t)}+|\nabla p|_{5',\Omega^t}\le c(|f|_{5',\Omega^t}+\|v(0)\|_{W_{5'}^{2-2/5'}(\Omega)}+d_4^2)\equiv d_5.
    \label{7.26}
  \end{equation}
  From (\ref{7.26}) it follows that $v\in L_\infty(\Omega^t)$ and $\nabla v\in L_q(\Omega^t)$ for any finite $q$.

  Then
  $$
    |\nabla(v'\cdot\nabla v)|_{5',\Omega^t}\le cd_5^2
  $$
  and $$
    |\partial_t^{1/2}(v'\cdot\nabla v)|_{5',\Omega^t}\le cd_5^2,
  $$
  where $\partial_t^{1/2}$ denotes the fractional partial derivative in time.

  Then \cite{MS} implies
  \begin{equation}\eqal{
      &\|v\|_{W_{10/3}^{3,3/2}(\Omega^t)}+\|\nabla p\|_{W_{10/3}^{1,1/2}(\Omega^t)}\cr
      &\le c(\|f\|_{W_{10/3}^{1,1/2}(\Omega^t)}+\|v(0)\|_{W_{10/3}^{3-2/5'}(\Omega)}+d_5^2)\equiv d_6.\cr}
    \label{7.27}
  \end{equation}
  Continuing the considerations yields
  \begin{equation}
    \|v\|_{W_2^{4,2}(\Omega^t)}+\|\nabla p\|_{W_2^{2,1}(\Omega^t)}\le c(\|f\|_{W_2^{2,1}(\Omega^t)}+\|v(0)\|_{W_2^3(\Omega)}+d_6^2).
    \label{7.28}
  \end{equation}
  This implies (\ref{7.4}) and ends the proof.
\end{proof}

\section{Global existence of solutions to problem (\ref{1.1})--(\ref{1.3}) for small data}\label{s8}

\begin{lemma}\label{l8.1}
  Assume that $G^2=|F_1|_{6/5,2,\Omega^t}^2+|\omega_1(0)|_{2,\Omega}^2+|r^2f_\varphi^2|_{2,\Omega^t}^2+|{v_\varphi^2(0)\over r}|_{2,\Omega}^2$, $G_1=|f_\varphi|_{\infty,1,\Omega^t}+|v_\varphi(0)|_{\infty,\Omega}$ are finite. Let $F_1={F_\varphi\over r}$, $\omega_1={\omega_\varphi\over r}$. Let $G_2=cG^3+G_1$.\\
  Let the following restriction hold $G_2\le{1\over[c(D_1^2+D_8^2)+1]^{3/2}}$, where $D_1$, $D_8$ are defined in Notation \ref{n1.1}. Then there exists a solution to (\ref{1.1})--(\ref{1.3}) such that
  \begin{equation}
    |v_\varphi|_{\infty,\Omega^t}\le{1\over[c(D_1^2+D_8^2)+1]^{1/2}}
    \label{8.1}
  \end{equation}
\end{lemma}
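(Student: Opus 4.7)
The plan is a bootstrap/continuation argument built around the quantity $|v_\varphi|_{\infty,\Omega^t}$. Set $M \colon = [c(D_1^2+D_8^2)+1]^{1/2}$ and define
$$
T^* \colon = \sup\bigl\{\tau \in [0,T] : |v_\varphi|_{\infty,\Omega^\tau} \le 1/M \bigr\}.
$$
By local existence and the initial bound coming from $|v_\varphi(0)|_{\infty,\Omega} \le G_1 \le 1/M^3 \le 1/M$ we have $T^*>0$, and the goal is to show that, under the smallness hypothesis $G_2 \le 1/M^3$, the bound $|v_\varphi|_{\infty,\Omega^{T^*}} \le 1/(2M)$ in fact holds strictly, so that $T^* = T$.

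The first step is to derive a $V$-norm estimate for $\Gamma = \omega_\varphi/r$ from (\ref{1.12}) under the bootstrap assumption. Testing (\ref{1.12}) with $\Gamma$ and integrating over $\Omega^{T^*}$ as in the proof of Lemma \ref{l3.1}, the only nonlinear obstruction is the coupling $2\int_{\Omega^{T^*}} (v_\varphi/r)\Phi\Gamma\,dxdt'$; on $[0,T^*]$ this is bounded by $c|v_\varphi|_{\infty,\Omega^{T^*}}\|\Phi\|_{L_2(\Omega^{T^*})}\|\Gamma\|_{L_2(\Omega^{T^*})}/r$-factor handled by Hardy's inequality (Lemma~\ref{l2.1}) and the $H^3$ elliptic estimate for $\psi_1$ (Lemma~\ref{l4.2}). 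Because $|v_\varphi|_{\infty,\Omega^{T^*}} \le 1/M$ with $M$ large, this contribution is absorbed into the viscous dissipation, leaving
$$
\|\Gamma\|_{V(\Omega^{T^*})}^2 + \|\Phi\|_{V(\Omega^{T^*})}^2 \le c\,G^2 .
$$

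The second step is to insert this $V$-norm estimate into Lemma~\ref{l6.2}, giving
$$
|v_\varphi|_{\infty,\Omega^{T^*}} \le \frac{D_2}{\sqrt\nu}D_1^{1/4}X^{3/4} + D_7
\le c(D_1,D_2,\nu)\,\|\Gamma\|_{V(\Omega^{T^*})}^{3/4} + c\,G_1
\le c(D_1,D_8)\,(G^{3/2} + G_1).
$$
A short computation with the definitions of $G_2$ and $M$ shows that $G_2 \le 1/M^3$ forces the right-hand side to be bounded by $1/(2M)$, which strictly improves the bootstrap assumption; consequently $T^* = T$ and (\ref{8.1}) follows. Existence of the regular solution on $[0,T]$ enjoying this bound is obtained from the local-in-time theory coupled with the just-derived a priori estimate through a standard continuation argument (Galerkin or fixed-point).

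The main obstacle is the second step: matching the exponent $3/4$ from Lemma~\ref{l6.2} with the smallness exponent $3$ in the hypothesis $G_2 = G^3 + G_1 \le 1/M^3$, so that every factor of $G$ and $G_1$ appearing after $\|\Gamma\|_V \lesssim G$ has been substituted actually folds into the target bound $1/M$. A secondary technical difficulty is controlling the coupling term $(v_\varphi/r)\Phi\Gamma$ and the vortex stretching term $(\omega_r\partial_r+\omega_z\partial_z)(v_r/r)\,\Phi$ via the modified stream function $\psi_1$, where the Liu--Wang expansions (\ref{1.21}) are needed to justify the vanishing boundary contributions at $r=0$ that make the Hardy estimate and $H^3$ elliptic estimate for $\psi_1$ usable in this setting.
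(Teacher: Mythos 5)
Your plan takes a different route from the paper (which never touches $\Phi$, $\Gamma$ or Lemma \ref{l6.2} in Section \ref{s8}), and it has two genuine gaps. First, the claimed estimate $\|\Phi\|_{V(\Omega^{T^*})}^2+\|\Gamma\|_{V(\Omega^{T^*})}^2\le cG^2$ cannot follow from the hypotheses of Lemma \ref{l8.1}. The quantity $G$ controls only swirl-related data: $F_\varphi/r$, $\omega_\varphi(0)/r$, $rf_\varphi$, $v_\varphi^2(0)/r$. The $\Phi$-equation (\ref{1.11}), which you cannot decouple from the $\Gamma$-equation because of the term $2(v_\varphi/r)\Phi$ in (\ref{1.12}), carries the forcing $\bar F_r=F_r/r$ and the initial datum $\Phi(0)=\omega_r(0)/r$; these enter $D_3$ and are merely assumed finite, not small, so $\|\Phi\|_V$ (and hence $\|\Gamma\|_V$) is at best $\phi(\mathrm{data})$, not $O(G)$. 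Likewise $D_7$ in (\ref{6.18}) contains $|f_\varphi/r|_{\infty,1,\Omega^t}$, which is not controlled by $G_1=|f_\varphi|_{\infty,1,\Omega^t}$ near the axis. Second, even granting $\|\Gamma\|_V\le cG$, the exponents do not close: Lemma \ref{l6.2} gives $|v_\varphi|_{\infty}\lesssim X^{3/4}\lesssim G^{3/4}$ (your $G^{3/2}$ is a slip), and under $G^3\le M^{-3}$ this yields only $|v_\varphi|_\infty\lesssim M^{-3/4}$, which is not $\le (2M)^{-1}$ for $M\ge 1$. You flag this as ``the main obstacle'' but do not resolve it, and it is not resolvable along this route.

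The paper's proof avoids both problems by never invoking $X$ or Lemma \ref{l6.2}. It works only with the angular quantities: an energy estimate for $\omega_1=\omega_\varphi/r$ from (\ref{8.4}) combined with the weighted estimate (\ref{8.5}) for $v_\varphi^2/r$ gives (\ref{8.8}), namely $\frac{\nu^2}{8}|\omega_1(t)|_{2,\Omega}^2\le D_1^2|v_\varphi|_{\infty,\Omega^t}^2+D_8^2G^2$, where the nonlinear term is handled by $|v_r/r|_{2,\Omega^t}\le D_1$ rather than by smallness. Feeding this into the $L_s$-estimate (\ref{8.9}) for $v_\varphi$ (via the embedding $|v_r/r|_{\infty,\Omega}\le c_*|\omega_1|_{2,\Omega}$) and letting $s\to\infty$ produces the closed cubic inequality $|v_\varphi|_{\infty,\Omega^t}\le c(D_1^2+D_8^2)|v_\varphi|_{\infty,\Omega^t}^3+G_2$, whose small root is then selected by the successive-approximation argument (\ref{8.13})--(\ref{8.19}) under the stated restriction on $G_2$. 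The smallness thus enters through the cubic self-bounding structure, not through matching powers of $G$ against Lemma \ref{l6.2}. If you want to keep a continuation-in-time flavor, you should graft it onto the paper's cubic inequality (to justify staying on the small branch), not onto a $\|\Gamma\|_V\lesssim G$ estimate that is unavailable.
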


\begin{proof}
  Consider equations $(\ref{1.7})_2$ and $(\ref{1.8})_2$. Introducing the quantities
  \begin{equation}
    u_1={v_\varphi\over r},\quad \omega_1={\omega_\varphi\over r},\quad f_1={f_\varphi\over r},\quad F_1={F_\varphi\over r}.
    \label{8.2}
  \end{equation}
  We can write $(\ref{1.7})_2$ and $(\ref{1.8})_2$ in the form
  \begin{equation}
    u_{1,t}+v\cdot\nabla u_1-\nu\bigg(\Delta+{2\over r}\partial_r\bigg)u_1=-{1\over r}v_ru_1+f_1,
    \label{8.3}
  \end{equation}
  \begin{equation}
    \omega_{1,t}+v\cdot\nabla\omega_1-\nu\bigg(\Delta+{2\over r}\partial_r\bigg)\omega_1=2u_1u_{1,z}+F_1.
    \label{8.4}
  \end{equation}
  We proved in \cite{NZ1} the inequality
  \begin{equation}\eqal{
    &{1\over 4}\bigg|{v_\varphi^2(t)\over r}\bigg|_{2,\Omega}^2+{3\over 4}\nu\bigg|\nabla{v_\varphi^2\over r}\bigg|_{2,\Omega^t}^2+{\nu\over 2}\intop_{\Omega^t}{v_\varphi^4\over r^4}dxdt'\cr
    &\le{3\over 2}\intop_{\Omega^t}\bigg|{v_r\over r}\bigg|{v_\varphi^4\over r^2}dxdt'+{27\over 4\nu^3}\intop_{\Omega^t}f_\varphi^4r^4dxdt'+{1\over 4}\intop_\Omega{v_\varphi^2(0)\over r}dx.\cr}
    \label{8.5}
  \end{equation}
  Multiplying (\ref{8.4}) by $\omega_1$, integrating over $\Omega$, using boundary conditions and applying the H\"older and Young inequalities to the terms from the r.h.s. we obtain
  \begin{equation}
    |\omega_1(t)|_{2,\Omega}^2+\nu|\nabla\omega_1|_{2,\Omega^t}^2\le{2\over\nu}\bigg|{v_\varphi\over r}\bigg|_{4,\Omega^t}^4+{2\over\nu}|F_1|_{6/5,2,\Omega^t}^2+|\omega_1(0)|_{2,\Omega}^2.
    \label{8.6}
  \end{equation}Multiplying (\ref{8.6}) by ${\nu^2\over 8}$ and adding the resulting equation to (\ref{8.5}) yields
  \begin{equation}\eqal{
    &{1\over 4}\bigg|{v_\varphi^2(t)\over r}\bigg|_{2,\Omega}^2+{3\over 4}\nu\bigg|\nabla{v_\varphi^2\over r}\bigg|_{2,\Omega^t}^2+{\nu\over 4}\bigg|{v_\varphi\over r}\bigg|_{4,\Omega^t}^4+{\nu^2\over 8}(\omega_1(t)|_{2,\Omega}^2\cr
    &\quad+{\nu^3\over 8}|\nabla\omega_1|_{2,\Omega^t}^2\le{3\over 2}\intop_{\Omega^t}\bigg|{v_r\over r}\bigg|{v_\varphi^4\over r^2}dxdt'+{\nu\over 4}|f_1|_{6/5,2,\Omega^t}^2\cr
    &\quad+{\nu^2\over 8}|\omega_1(0)|_{2,\Omega}^2+{27\over 4\nu^3}|rf_\varphi|_{4,\Omega^t}^4+{1\over 4}\bigg|{v_\varphi^2(0)\over r}\bigg|_{2,\Omega}^2\cr
    &\le{3\over 2}\intop_{\Omega^t}\bigg|{v_r\over r}\bigg|{v_\varphi^4\over r^2}dxdt'+D_8^2G^2,\cr}
    \label{8.7}
  \end{equation}
  where
  $$\eqal{
    &D_8^2=\max\bigg\{{\nu\over 4},{\nu^2\over 8},{27\over 4\nu^3},{1\over 4}\bigg\}\cr
    &G^2=|F_1|_{6/5,2,\Omega^t}^2+|\omega_1(0)|_{2,\Omega}^2+|rf_\varphi|_{4,\Omega^t}^4+\bigg|{v_\varphi^2(0)\over r}\bigg|_{2,\Omega}^2.\cr}
  $$
  The first term on the r.h.s. of (\ref{8.7}) is bounded by
  $$
    {\varepsilon\over 2}\intop_{\Omega^t}{v_\varphi^4\over r^4}dx+{1\over 2\varepsilon}\bigg|{v_r\over r}\bigg|_{2,\Omega^t}^2|v_\varphi|_{\infty,\Omega^t}^2.
  $$
  Using the energy estimate
  $$
    \bigg|{v_r\over r}\bigg|_{2,\Omega^t}\le D_1
  $$
  we can write (\ref{8.7}) in the short form
  \begin{equation}
    {\nu^2\over 8}|\omega_1(t)|_{2,\Omega}^2+{\nu^3\over 8}|\nabla\omega_1|_{2,\Omega^t}^2+{\nu\over 8}\bigg|{v_\varphi\over r}\bigg|_{4,\Omega^t}^4\le D_1^2|v_\varphi|_{\infty,\Omega^t}^2+D_8^2G^2.
    \label{8.8}
  \end{equation}
  Multiplying $(\ref{1.7})_2$ by $v_\varphi|v_\varphi|^{s-2}$, integrate over $\Omega$ and use the boundary conditions we have
  $$
    {1\over s}{d\over dt}|v_\varphi|_{s,\Omega}^s+\nu\intop_\Omega{|v_\varphi|^s\over r^2}dx\le\intop_\Omega\bigg|{v_r\over r}\bigg|\,|v_\varphi|^sdx+\intop_\Omega f_\varphi v_\varphi|v_\varphi|^{s-2}dx.
  $$
  The first term on the r.h.s. is bounded by
  $$
    {\nu\over 2}\intop_\Omega{|v_\varphi|^s\over r^2}dx+{1\over 2\nu}\intop_\Omega v_r^2|v_\varphi|^sdx.
  $$
  Hence, we obtain the inequality
  \begin{equation}\eqal{
    &{d\over dt}|v_\varphi|_{s,\Omega}\le{1\over 2\nu}|v_r|_{\infty,\Omega}^2|v_\varphi|_{s,\Omega}+|f_\varphi|_{s,\Omega}\cr
    &\le{R^2\over 2\nu}\bigg|{v_r\over r}\bigg|_{\infty,\Omega}^2|v_\varphi|_{s,\Omega}+|f_\varphi|_{s,\Omega}\le c_*|\omega_1|_{2,\Omega}^2|v_\varphi|_{s,\Omega}+|f_\varphi|_{s,\Omega},\cr}
    \label{8.9}
  \end{equation}
  where $c_*$ is from the imbedding $|v_r/r|_{\infty,\Omega}\le c_*|\omega_1|_{2,\Omega}$.

  Integrating (\ref{8.9}) with respect to time yields
  $$
    |v_\varphi(t)|_{s,\Omega}\le c_*|\omega_1|_{2,\infty,\Omega^t}^2|v_\varphi|_{s,\infty,\Omega^t}+|f_\varphi|_{s,\Omega^t}+|v_\varphi(0)|_{s,\Omega}.
  $$
  Passing with $s\to\infty$ and using (\ref{8.8}) we obtain
  \begin{equation}\eqal{
      |v_\varphi|_{\infty,\Omega^t}&\le c_*8/\nu^2(D_1^2|v_\varphi|_{\infty,\Omega^t}^2+D_8^2G^2)|v_\varphi|_{\infty,\Omega^t}\cr
      &\quad+|f_\varphi|_{\infty,1,\Omega^t}+|v_\varphi(0)|_{\infty,\Omega}\le c(D_1^2+D_8^2)|v_\varphi|_{\infty,\Omega^t}^3+cG^3+G_1,\cr}
    \label{8.10}
  \end{equation}
  where $G_1=|f_\varphi|_{\infty,1,\Omega^t}+|v_\varphi(0)|_{\infty,\Omega}$.

  Let $M$ be a solution to the equations
  \begin{equation}
    M=c(D_1^2+D_8^2)M^3+G_2
    \label{8.11}
  \end{equation}
  where $G_2=cG^3+G_1$. Hence
  \begin{equation}
    |v_\varphi|_{\infty,\Omega^t}\le M.
    \label{8.12}
  \end{equation}
  We prove existence of solutions to (\ref{8.11}) by the following method of successive approximations
  \begin{equation}
    M_{n+1}=c(D_1^2+D_8^2)M_n^3+G_2.
    \label{8.13}
  \end{equation}
  First we show the uniform boundedness. Let $M_n\le A$. Then we need
  \begin{equation}
    c(D_1^2+D_8^2)A^3+G_2\le A.
    \label{8.14}
  \end{equation}
  Let $A=G_2^{1/3}$. Then
  \begin{equation}
    [c(D_1^2+D_8^2)+1]G_2\le G_2^{1/3}\quad {\rm so}\quad G_2\le{1\over[c(D_1^2+D_8^2)+1]^{3/2}}.
    \label{8.15}
  \end{equation}
  The above condition is a restriction on data. Therefore,
  \begin{equation}
    M_n\le G^{1/3}\quad {\rm for\ any}\ \ n\in\N.
    \label{8.16}
  \end{equation}
  Next we show convergence. Let $N_n=M_n-M_{n-1}$. Then (\ref{8.13}) implies
  \begin{equation}\eqal{
      &N_{n+1}\le c(D_1^2+D_8^2)(M_n^2+M_nM_{n-1}+M_{n-1}^2)N_n\cr
      &\le c(D_1^2+D_8^2){1\over[c(D_1^2+D_8^2)+1]^3}N_n\cr}
    \label{8.17}
  \end{equation}
  We have convergence if
  \begin{equation}
    {c(D_1^2+D_8^2)\over[c(D_1^2+D_8^2)+1]^3}<1.
    \label{8.18}
  \end{equation}
  Hence, (\ref{8.17}) holds. Therefore, we proved that
  \begin{equation}
    |v_\varphi|_{\infty,\Omega^t}\le{1\over[c(D_1^2+D_8^2)+1]^{1/2}}=\colon{1\over c_1}
    \label{8.19}
  \end{equation}
  Therefore, we have existence of global regular solutions with small data.
\end{proof}

This small-data result is independent of the critical-wedge obstruction and is included as a separate regime in which the angular component is controlled directly.

\noindent
{\bf Conflict of interest statement}

The authors report there are no competing interests to declare.

\noindent
{\bf Data availability statement}

The authors report that there is no data associated with this work.

\bibliographystyle{amsplain}
\begin{thebibliography}{99}
\bibitem[BIN]{BIN} Besov, O.V.; Il'in, V.P.; Nikolskii, S.M.: Integral Representations of Functions and Imbedding Theorems, Nauka, Moscow 1975 (in Russian); English transl: vol. I. Scripta Series in Mathematics, V.H. Winston, New York (1978).

\bibitem [B]{B} Bugrov, Ya.S.: Function spaces with mixed norm, Izv. AN SSSR, Ser. Mat. 35 (1971), 1137--1158 (in Russian); English transl: Math USSR -- Izv., 5 (1971), 1145-1167.

\bibitem[CKN]{CKN} Caffarelli, L.; Kohn, R.V.; Nirenberg, L.: Partial regularity of suitable weak solutions of the Navier-Stokes equations, Comm. Pure Appl. Math. 35 (1982), 771--831.

\bibitem[CFZ]{CFZ} Chen, H.; Fang, D.; Zhang, T.: Regularity of 3d axisymmetric Navier-Stokes equations, Disc. Cont. Dyn. Syst. 37 (4) (2017), 1923--1939.

\bibitem[G]{G} Golovkin, K.K.: On equivalent norms for fractional spaces, Trudy Mat. Inst. Steklov 66 (1962), 364--383 (in Russian); English transl.: Amer. Math. Soc. Transl. 81 (2) (1969), 257--280.

\bibitem[KP]{KP} Kreml, O.; Pokorny, M.: A regularity criterion for the angular velocity component in axisymmetric Navier-Stokes equations, Electronic J. Diff. Eq. vol. 2007 (2007), No. 08, pp. 1--10.

\bibitem[L]{L} Ladyzhenskaya, O.A.: Unique global solvability of the three-dimensional Cauchy problem for the Navier-Stokes equations in the presence of axial symmetry, Zap. Nau\v{c}n. Sem Leningrad, Otdel. Mat. Inst. Steklov (LOMI), 7: 155--177, 1968; English transl., Sem. Math. V.A. Steklov Math. Inst. Leningrad, 7: 70--79, 1970.

\bibitem[LW]{LW} Liu, J.G.; Wang, W.C.: Characterization and regularity for axisymmetric solenoidal vector fields with application to Navier-Stokes equations, SIAM J. Math. Anal. 41 (2009), 1825--1850.

\bibitem[LZ]{LZ} Lei, Z.; Zhang, Qi S: Criticality of the axially symmetric Navier-Stokes equations, Pacific J. Math. 289 (1) (2017), 169--187.

\bibitem[MS]{MS} Maremonti, P.; Solonnikov, V.A.: On the estimates of solutions of evolution Stokes problem in anisotropic Sobolev spaces with mixed norm, Zap. Nauchn. Sem. LOMI 223 (1994), 124--150.

\bibitem[NP1]{NP1} Neustupa, J.; Pokorny, M.: An interior regularity criterion for an axially symmetric suitable weak solutions to the Navier-Stokes equations, J. Math. Fluid Mech. 2 (2000), 381--399.

\bibitem[NP2]{NP2} Neustupa, J.; Pokorny, M.: Axisymmetric flow of Navier-Stokes fluid in the whole space with non-zero angular velocity component, Math. Bohemica 126 (2001), 469--481.

\bibitem[NZ]{NZ} Nowakowski, B.; Zaj\c{a}czkowski, W.M.: On weighted estimates for the stream function of axially-symmetric solutions to the Navier-Stokes equations in a bounded cylinder, Appl. Math. 50.2 (2023), 123--148, doi: 10.4064/am2488-1-2024.

\bibitem[NZ1]{NZ1} Nowakowski, B.; Zaj\c{a}czkowski, W.M.: Global regular axially-symmetric solutions to the Navier-Stokes equations with small swirl, J. Math. Fluid Mech. (2023), 25:73.

\bibitem[OP]{OP} O\.za\'nski, W.S.; Palasek, S.: Quantitative control of solutions to the axisymmetric Navier-Stokes equations in terms of the weak $L^3$ norm, Ann. PDE 9:15 (2023), 1--52.

\bibitem[OZ]{OZ} O\.za\'nski, W.S.; Zaj\c{a}czkowski, W.M.: On the regularity of axially-symmetric solutions to the incompressible Navier-Stokes equations in a cylinder, J. Diff. Equs, 438, 5 Sept. 2025, 113373, arXiv:2405.16670v1.

\bibitem[P]{P} Pokorn\'y, M.: A regularity criterion for the angular component in the case of axisymmetric Navier-Stokes equations. In: World Scientific Publishing Co.; Inc., River Edge, N.J., 2002, 233--242.

\bibitem[T]{T} Triebel, H.: Interpolation Theory, Functions Spaces, Differential Operators, North-Holand Amsterdam (1978).

\bibitem[W]{W} Wei, D.: Regularity criterion to the axially symmetric Navier-Stokes equations, J. Math. Anal. Appl. 435 (2016), 402--413.

\bibitem[Z1]{Z1} Zaj\c{a}czkowski, W.M.: Global regular axially symmetric solutions to the Navier-Stokes equations. Part 1, Mathematics 2023, 11 (23), 4731, https://doi.org/10.3390/math11234731; also available at arXiv.2304.00856.

\bibitem[Z2]{Z2} Zaj\c{a}czkowski, W.M.: Global regular axially symmetric solutions to the Navier-Stokes equations. Part 2, Mathematics 2024, 12 (2), 263, https://doi.org/10.3390/math12020263.

\end {thebibliography}
\end{document}